\theoremstyle{definition}
\newtheorem{definition}{Definition}[section]
\theoremstyle{plain}
\newtheorem{lemma}[definition]{Lemma}
\newtheorem{theorem}[definition]{Theorem}
\newtheorem{proposition}[definition]{Proposition}
\newtheorem{corollary}[definition]{Corollary}
\theoremstyle{remark}
\newcommand{\mycl}{\operatorname{cl}}
\newcommand{\myint}{\operatorname{int}}
\begin{document}
\title[Definable quotients in locally o-minimal structures]{Definable quotients in locally o-minimal structures}
\author[M. Fujita]{Masato Fujita}
\address{Department of Liberal Arts,
Japan Coast Guard Academy,
5-1 Wakaba-cho, Kure, Hiroshima 737-8512, Japan}
\email{fujita.masato.p34@kyoto-u.jp}

\author[T. Kawakami]{Tomohiro Kawakami}
\address{Department of Mathematics,
Wakayama University,
Wakayama, 640-8510, Japan}
\email{kawa0726@gmail.com}

\begin{abstract}
Let $\mathcal F=(F, +. \cdot, <, 0, 1, \dots)$ be a definably complete locally o-minimal expansion of an ordered field.
We demonstrate the existence of definable quotients of definable sets by definable equivalence relations when several technical conditions are satisfied.
These conditions are satisfied when $X$ is a locally closed definable subset of $F^n$ and there is a definable proper action of a definable group $G$ on $X$.
\end{abstract}

\subjclass[2020]{Primary 03C64; Secondary 54B15, 57S99}

\keywords{Definably complete, locally o-minimal, definable quotients}

\maketitle

\section{Introduction}\label{sec:intro}

We study definable quotients of definable sets by definable equivalence relations for a definably complete locally o-minimal expansion of an ordered field in this paper.

We first review historical background.
In real algebraic geometry, quotients of semialgebraic spaces are studied in Brumfiel's work \cite{B} and Scheiderer's work \cite{Sch}.
In Brumfiel's work, equivalence relations are assumed to be definably proper and the existence of definable proper quotients is demonstrated.
 Scheiderer's work generalizes Brumfiel's work and relaxes the assumptions dramatically.
 A standard textbook for real algebraic geometry is \cite{BCR}.

It is well known that a semialgebraic set is definable in an o-minimal expansion of an ordered field.
Readers who are interested in o-minimal structures should consult \cite{D}.
In the o-minimal setting, definable quotients are introduced in \cite[Chapter 10, Section 2]{D}.
It is adopted from Brumfiel's work.
To the authors' best knowledge, Scheiderer's work is not generalized to the o-minimal case because algebraic methods only applicable in the semialgebraic cases are employed in his work.

We then explain our results. 
Brumfel's work is also generalized to the locally o-minimal case without difficult modifications of his proof except the proof of an intermediate proposition.
It is discussed in Section \ref{sec:def_prop_quo}.
We construct a definable proper quotient of definable sets by definable equivalence relations when the equivalence relation is definably proper.

We also apply Scheiderer's strategy to the locally o-minimal case in Section \ref{sec:def_quo}.
Our main theorem of this section gives an equivalence condition for a definable set and a definable equivalence relation on it to have a definable quotient.
Our proof follows Scheiderer's strategy in spirit, but technical details are not identical with those in Scheiderer's proof.

Finally, we apply our results to definable proper actions of definable groups to definable sets in Section \ref{sec:proper_action}.
The basic notions and assertions are reviewed in Section \ref{sec:prelim}.

We clarify our basic notations.
Let $\mathcal F=(F,<,\ldots)$ be an expansion of a dense linear order without endpoints.
An open interval is denoted by $(a,b):=\{x \in F\;|\;a<x<b\}$, where $a,b \in F \cup\{\pm \infty\}$. 
A closed interval is denoted by $[a,b]$ and we naturally interpret $(a,b]$ and $[a,b)$.
The term `definable' means `definable in the given structure with parameters' in this paper.
The set $F$ has the topology induced from the order $<$.
The Cartesian product $F^n$ equips the product topology.
The topology of a subset of $F^n$ is the relative topology.
When $\mathcal F$ is an expansion of an ordered group, for $x=(x_1, \dots, x_n) \in F^n$, we denote $|x|=\max_{1 \le i \le n}|x_i|$.
We also set $x-y:=(x_1-y_1,\ldots, x_n-y_n) \in F^n$ for each $x=(x_1,\ldots,x_n), y=(y_1,\ldots, y_n) \in F^n$.
Let $X$ be a topological space and $A$ be a subset of $X$.
$\partial_XA$, $\myint_X(A)$ and $\mycl_X(A)$ denote the frontier, the interior and the closure of $A$, respectively.
We omit the subscript $X$ when it is clear from the context.
Consider a definable subset of $F^n$.
We say that it is open/closed/locally closed when it is open/closed/locally closed in $F^n$.
Otherwise, we clearly describe in which space it is open/closed/locally closed.

\section{Preliminaries}\label{sec:prelim}

This section is a preliminary section.
We recall basic definitions and gather basic properties of definably complete locally o-minimal structures which are necessary in the subsequent sections.

We first review the definitions of local o-minimality and definable completeness.
\begin{definition}
	A densely linearly ordered structure $\mathcal F=(F,<,\ldots)$ without endpoints
	is \textit{definably complete} if every definable subset of $F$ has both a supremum and an infimum in 
	$F \cup \{ \pm \infty\}$ \cite{M}.
	
	The structure $\mathcal F=(F,<,\ldots)$ is \textit{locally o-minimal} if, for every definable subset $X$ of $F$ and for every point $a\in F$, there exists an open interval $I$ such that $a \in I$ and $X \cap I$ is a finite union of points and 
	open intervals \cite{TV}.
\end{definition}

We frequently use the following results:
\begin{lemma}\label{lem:image}
Consider a definably complete structure.
The image of a closed and bounded definable set under a definable continuous map is again closed and bounded.
\end{lemma}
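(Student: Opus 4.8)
The plan is to prove that the image $f(X)$ of a closed and bounded definable set $X$ under a definable continuous map $f$ is again closed and bounded. Boundedness is the easy half: I would first observe that since $X$ is bounded and $f$ is definable, the image $f(X)$ is a definable subset of some $F^m$. I would prove boundedness coordinatewise. For each coordinate projection $\pi_i$, the composite $\pi_i \circ f$ is a definable function on $X$ with definable image, and by definable completeness this image has a supremum and an infimum in $F \cup \{\pm\infty\}$. The main thing to rule out is that these are $\pm\infty$; I expect to handle this by reducing to the one-dimensional statement that a definable continuous function on a closed bounded definable subset of $F^n$ has bounded image.

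For the core argument I would reduce to the one-variable case and argue by contradiction using definable completeness. Suppose $f(X)$ is not bounded above in some coordinate, or more cleanly, suppose $g = |f|$ (using the $\max$-norm notation $|\cdot|$ introduced in the basic notations) is unbounded on $X$. Then for the definable set $g(X) \subseteq F$, definable completeness gives $\sup g(X) = +\infty$, meaning that for every $t \in F$ there is a point of $X$ where $g$ exceeds $t$. I would use this together with closedness and boundedness of $X$ to extract a contradiction, the key tool being a definable analogue of sequential compactness or a definable curve/selection argument valid in definably complete structures.

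The hard part, and the step I expect to be the main obstacle, is the closedness of $f(X)$, since definably complete locally o-minimal structures need not have the full force of o-minimal cell decomposition or definable choice. The natural strategy is to show that $f(X)$ contains its frontier: take a point $y \in \mycl(f(X))$ and show $y \in f(X)$. I would consider the definable family of fibers, or the definable set $Z = \{(x,y) \in X \times \mycl(f(X)) \mid f(x) = y\}$ and analyze the fiber over the limit point $y$. The cleanest route is likely to fix $y \in \mycl(f(X))$, consider the definable function $x \mapsto |f(x) - y|$ on $X$, and show its infimum is attained and equals zero; attainment of the infimum on a closed bounded definable set is precisely where definable completeness must be leveraged, presumably via an extreme value type result for definably complete structures. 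I would expect to invoke such an extreme value lemma (the definable continuous function on a closed bounded definable set attains its infimum), which may itself be proved by a bisection argument using definable completeness, recursively halving $X$ along coordinate directions to localize a point where the infimum is achieved.

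Throughout, the underlying technical principle I would rely on is that definable completeness supplies suprema and infima, and that a monotone nested intersection of nonempty closed bounded definable sets is nonempty — a definable compactness substitute. Assembling these, closedness of $f(X)$ follows from showing every frontier point is an actual image, and combined with the boundedness argument above this yields that $f(X)$ is closed and bounded, completing the proof.
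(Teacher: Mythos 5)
The paper gives no proof of this lemma at all: it simply cites \cite{M} (Miller, Proposition 1.10). So the only question is whether your sketch would stand on its own, and as written it would not, for two concrete reasons. First, you appeal to ``a definable curve/selection argument valid in definably complete structures.'' No such tool exists at this level of generality: curve selection (Proposition \ref{prop:curve_selection}) and definable choice (Proposition \ref{prop:definable_choice}) both require local o-minimality \emph{and} an ordered group, whereas this lemma is stated for an arbitrary definably complete expansion of a dense linear order. For the same reason the expressions $|f(x)-y|$ and $g=|f|$, and the idea of ``recursively halving $X$ along coordinate directions,'' are not available in general: there is no subtraction and no midpoint. In the general setting one must work coordinatewise with the order alone (suprema, infima, and frontier points of $\pi_i(f(X))$).

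Second, and more seriously, the ``extreme value lemma'' you plan to invoke is essentially the statement being proved: for $m=1$, boundedness plus closedness of $f(X)$ \emph{is} the attainment of $\sup f(X)$ and $\inf f(X)$, and (in the group case) closedness of the image is exactly the attainment of $\inf_{x\in X}|f(x)-y|$ at the value $0$. Invoking it is therefore circular unless you prove it independently, and the proof you gesture at --- an infinite recursive bisection --- is not a definable (first-order) argument: one cannot carry out infinitely many successive choices. The argument that actually works, and the one in \cite{M}, is a one-step ``continuous induction'': by induction on the ambient dimension, one considers the definable set of parameters $t$ for which the desired property holds on $X\cap\{x_1\le t\}$, takes its supremum (which exists by definable completeness), and derives a contradiction from continuity if that supremum is not the right endpoint. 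Your nested-intersection principle is a correct substitute for compactness, but it applies only to \emph{definable families} of closed bounded sets, it itself requires proof from definable completeness, and you have not exhibited the definable family to which it would be applied. So the high-level intuition is right, but the load-bearing steps are either unavailable in this generality or circular as stated.
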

\begin{proof}
	See \cite[Proposition 1.10]{M}.
\end{proof}

\begin{definition}
	Consider a structure.
	A \textit{definable equivalence relation} $E$ on a definable set $X$ is a definable subset of $X \times X$ such that the binary relation $\sim_E$ defined by $x \sim_E y \Leftrightarrow (x,y) \in E$ is an equivalence relation defined on $X$.
\end{definition}

\begin{proposition}[Definable choice lemma]\label{prop:definable_choice}
	Consider a definably complete locally o-minimal expansion of an ordered group $\mathcal F=(F,<,0,+\ldots)$.
	Let $\pi:F^{m+n} \rightarrow F^m$ be a coordinate projection.
	Let $X$ and $Y$ be definable subsets of $F^m$ and $F^{m+n}$, respectively,  satisfying the equality $\pi(Y)=X$.
	There exists a definable map $\varphi:X \rightarrow Y$ such that $\pi(\varphi(x))=x$ for all $x \in X$.
	Furthermore, if $E$ is a definable equivalence relation defined on a definable set $X$, there exists a definable subset $S$ of $X$ such that $S$ intersects at exactly one point with each equivalence class of $E$. 
\end{proposition}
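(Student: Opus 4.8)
The plan is to reduce both assertions to a single tool: a \emph{canonical} definable selection that assigns to each nonempty definable subset $A \subseteq F^n$ an element $\theta_n(A) \in A$ depending only on the set $A$ and definable uniformly in any parameters defining $A$. Granting such $\theta_n$, the first assertion follows immediately by setting $\varphi(x) = (x, \theta_n(Y_x))$, where $Y_x = \{t \in F^n : (x,t) \in Y\}$ is nonempty for $x \in X$ because $\pi(Y) = X$; then $\varphi(x) \in Y$ and $\pi(\varphi(x)) = x$. For the second assertion, applying $\theta_m$ to the equivalence classes yields the definable map $r \colon X \to X$, $r(x) = \theta_m([x])$ with $[x] = \{y : (x,y) \in E\}$; since $\theta_m$ depends only on the set $[x]$ and $[x] = [x']$ whenever $x \sim_E x'$, the map $r$ is constant on each class and satisfies $r(x) \sim_E x$. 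Taking $S = \{x \in X : r(x) = x\}$ gives a definable set meeting each class in exactly the point $\theta_m([x])$, hence exactly once. Thus everything rests on constructing $\theta_n$.

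I would construct $\theta_n$ by induction on $n$, the case $n = 1$ being the crux. For $n=1$ and a nonempty definable $A \subseteq F$, first normalize to the bounded case: fix the definable order isomorphism $h \colon F \to (-1,1)$, $h(t) = t/(1+|t|)$, put $A' = h(A)$, and define $\theta_1(A) = h^{-1}(\theta_1(A'))$, so that it suffices to select from the bounded set $A'$. Let $b = \inf A' \in [-1,1]$, which exists by definable completeness. If $b \in A'$, select $b$. If $b \notin A'$, invoke local o-minimality at $b$: there is $\epsilon > 0$ with $A' \cap (b-\epsilon, b+\epsilon)$ a finite union of points and open intervals; since $b = \inf A'$ lies outside $A'$ yet is approached from the right, this union must contain an interval with left endpoint $b$, so that $(b, c) \subseteq A'$ where $c = \sup\{c' : (b,c') \subseteq A'\} \in F$. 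I then select a definable point of $(b,c)$, for instance its midpoint $(b+c)/2$. Each step --- the normalization, the infimum, the case distinction, the supremum $c$, and the midpoint --- is a canonical operation on the set and is uniformly definable in parameters, so $\theta_1$ has the required properties. For the inductive step I reduce one coordinate at a time: given nonempty definable $A \subseteq F^n$, let $A_1 = \{s \in F : \exists t,\ (s,t) \in A\}$ be its projection to the first coordinate, put $s_1 = \theta_1(A_1)$, and apply $\theta_{n-1}$ to the nonempty fiber $\{t \in F^{n-1} : (s_1, t) \in A\}$; concatenating gives $\theta_n(A) \in A$, again depending only on $A$.

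The main obstacle is the $n=1$ construction in the genuinely locally o-minimal (as opposed to o-minimal) setting: a definable $A \subseteq F$ need not be a finite union of points and intervals, so there is no global cell structure to exploit, and its infimum may be unbounded or unattained. The two devices above are exactly what resolve this: the order isomorphism $h$ reduces the unbounded case to a bounded one so that $\inf A'$ is a genuine element of $F$, and local o-minimality is applied only at the single point $b = \inf A'$, where it guarantees the clean dichotomy between ``infimum attained'' and ``first interval $(b,c)$.'' The selection of an interior point of $(b,c)$ uses the ordered field structure (the midpoint); this is the only place where more than the ordered-group structure intervenes. Finally, I would stress that checking $\theta_n$ is definable \emph{uniformly} in the defining parameters --- so that $r(x) = \theta_m([x])$ is a single definable map --- is routine but essential bookkeeping, and that the canonicity of $\theta_n$ (dependence on the set alone) is precisely what makes the second assertion fall out with no extra argument about compatibility across equivalence classes.
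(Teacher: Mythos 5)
The paper offers no proof of its own here---it defers entirely to \cite{F7}---so I am judging your argument on its merits. Its overall architecture is the standard and correct one: a uniformly definable selection $\theta_n$ built by induction on $n$, with the $n=1$ case handled by taking the infimum and applying local o-minimality \emph{only at that one point} to extract either the attained infimum or an initial interval $(b,c)$; the derivation of both assertions from $\theta_n$ (in particular taking $S$ to be the fixed-point set of $x\mapsto\theta_m([x])$) is clean and correct, as is the extraction of the interval $(b,c)\subseteq A'$ when the infimum is not attained.

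The genuine defect is that the proposition is stated for an expansion of an ordered \emph{group}, while your proof uses the field structure in an essential and not merely cosmetic way: the normalization $h(t)=t/(1+|t|)$ is not definable in an ordered group, so your reduction to the bounded case is unavailable exactly where it is needed (when $\inf A=-\infty$). You flag the midpoint $(b+c)/2$ as the only non-group ingredient, but that is the lesser issue (definably complete ordered groups are divisible abelian by \cite{M}, so halving is definable, though this deserves a remark); the compactification is the real one. This matters for the paper's internal logic, since the proposition is invoked in group-level statements such as Lemma \ref{lem:cont} and Lemma \ref{lem:identifying_eq}. The repair is routine: dispense with $h$, split $A$ at $0$ (if $A\cap[0,\infty)\neq\emptyset$ put $b=\inf(A\cap[0,\infty))\in F$, otherwise put $b=\sup A\le 0$), and when the resulting initial segment of $A$ at $b$ is a whole half-line, select $b+e$ (or $b-e$) for a fixed positive parameter $e$. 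Be aware that some such parameter is unavoidable: in $(\mathbb{Q},<,+,0)$ the set $(0,\infty)$ is $\emptyset$-definable but has no $\emptyset$-definable element, so a parameter-free canonical $\theta_1$ cannot exist at the group level. This is harmless---$\theta_n$ still depends only on the set $A$ once $e$ is fixed, which is all the second assertion needs---but your closing claim that the canonicity check is ``routine bookkeeping'' should be tempered accordingly.
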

\begin{proof}
	See \cite[Lemma 2.8]{F7} and its proof. 
\end{proof}

\begin{proposition}\label{prop:zero}
	Let $\mathcal F=(F, +, <, \dots,)$ be a definably complete expansion of an ordered group.
	Every definable closed subset $A$ of $F^m$ is the zeros of a definable continuous map.
\end{proposition}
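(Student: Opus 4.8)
The plan is to realize $A$ as the zero set of its distance function. Assume first that $A \neq \emptyset$ and define $f \colon F^m \to F$ by $f(x) = \inf\{\,|x-a| \;:\; a \in A\,\}$. For each fixed $x$ the set $\{\,|x-a| : a \in A\,\}$ is a definable subset of $F$; it is bounded below by $0$ and, since $A \neq \emptyset$, bounded above by $|x-a_0|$ for any chosen $a_0 \in A$. Hence by definable completeness its infimum is a genuine element of $F$. Moreover the graph of $f$ is definable because taking infima of a definable family is a definable operation, so $f$ is a definable map with $f \geq 0$. When $A = \emptyset$ one simply takes $f \equiv 1$, whose zero set is empty.

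First I would verify that $f^{-1}(0) = A$. If $x \in A$, then $0$ lies in the defining set (take $a = x$), so $f(x) = 0$. Conversely, if $x \notin A$, then since $A$ is closed there is an $\varepsilon > 0$ with $\{y : |y-x| < \varepsilon\} \cap A = \emptyset$, so $|x-a| \geq \varepsilon$ for every $a \in A$ and therefore $f(x) \geq \varepsilon > 0$. Thus $f(x) = 0$ precisely when $x \in A$.

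Next I would establish continuity through the Lipschitz estimate $|f(x)-f(y)| \leq |x-y|$. The max-norm obeys the triangle inequality $|x-a| \leq |x-y| + |y-a|$, which one checks coordinatewise using the ordered-group structure; hence $f(x) \leq |x-y| + |y-a|$ for every $a \in A$, and passing to the infimum over $a$ gives $f(x) \leq |x-y| + f(y)$. By symmetry $f(y) \leq |x-y| + f(x)$, so the estimate holds. Continuity then follows at once: given $x$ and $\delta > 0$, the open box $\{y : |y-x| < \delta\}$ maps into $(f(x)-\delta,\, f(x)+\delta)$.

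I expect the only delicate point to be the claim that $f$ actually takes values in $F$ rather than in $F \cup \{\pm\infty\}$; this is exactly where definable completeness is used, and the boundedness remarks above settle it. The remainder is a routine transcription of the classical distance-function argument, and it is worth noting that no multiplicative (field) structure is needed—the ordered-group operations suffice throughout.
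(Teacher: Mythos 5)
Your proposal is correct and is exactly the paper's argument: the paper also takes $f(x)=\inf\{|x-a| : a\in A\}$ for nonempty $A$ and leaves the verification (well-definedness via definable completeness, $f^{-1}(0)=A$ via closedness, and the Lipschitz continuity estimate) to the reader. Your write-up simply supplies those routine details, including the correct observation that only the ordered-group structure is needed.
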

\begin{proof}
	We may assume that $A \neq \emptyset$.
	The function $f:F^m \to F$ defined by $f(x)=\inf \{ |x-a|\;|\;a \in A \}$
	satisfies our requirements.
\end{proof}

We use the following theorem in Section \ref{sec:def_prop_quo}.
\begin{theorem}[Definable Tietze extension theorem]\label{thm:tietze}
	Consider a definably complete expansion of an ordered field $\mathcal F=(F,0,1,+,\cdot,\ldots)$.
	Let $A$ be a definable closed subset of $F^m$.
	Any definable continuous map $f:A \rightarrow F$ has a definable continuous extension $F:F^m \rightarrow F$.
\end{theorem}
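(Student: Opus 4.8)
The plan is to prove the definable Tietze extension theorem by first reducing to the case of a bounded target and then constructing the extension as a uniform limit of approximating functions, mimicking the classical proof of Urysohn/Tietze but checking that every step stays within the definable category.

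First I would reduce to the case where $f$ is bounded. Since $F$ is an ordered field, I can compose with a definable homeomorphism $\theta:F\to(-1,1)$, for instance $\theta(t)=t/(1+|t|)$, whose inverse is also definable. Then $g=\theta\circ f:A\to(-1,1)$ is a bounded definable continuous map, and once I extend $g$ to a definable continuous $G:F^m\to(-1,1)$, the composite $\theta^{-1}\circ G$ extends $f$. Keeping the range inside the open interval $(-1,1)$ (rather than the closed one) is the standard technical point that lets $\theta^{-1}$ be applied.

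The core of the argument is an iterative approximation. Given a bounded definable continuous $g:A\to[-c,c]$, the key construction is a definable continuous function $h:F^m\to[-c/3,c/3]$ with $|g-h|\le 2c/3$ on $A$. To build $h$ definably I would use Proposition \ref{prop:zero}: the two definable closed sets $A_-=\{x\in A\mid g(x)\le -c/3\}$ and $A_+=\{x\in A\mid g(x)\ge c/3\}$ are closed in $F^m$ (as $A$ is closed), so by Proposition \ref{prop:zero} there are definable continuous functions $d_\pm:F^m\to F$ vanishing exactly on $A_\pm$. Setting
\[
h(x)=\frac{c}{3}\cdot\frac{d_-(x)-d_+(x)}{d_-(x)+d_+(x)}
\]
gives an explicit definable continuous Urysohn-type function separating $A_-$ and $A_+$ with values in $[-c/3,c/3]$ (after checking the denominator never vanishes, which holds because $A_-$ and $A_+$ are disjoint). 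Iterating this with $c$ replaced by $2c/3$ produces a sequence $h_n$ of definable continuous functions on $F^m$; the partial sums form a definable sequence of approximations whose uniform limit $G=\sum_n h_n$ extends $g$.

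The main obstacle I anticipate is handling the infinite sum in the definably complete setting. Unlike the classical case, I cannot invoke a general metric-completeness theorem; instead I must produce the limit $G$ by an explicit definable formula or argue that the uniformly Cauchy sequence of partial sums converges to a definable continuous function using definable completeness. The cleanest route is to express the value $G(x)$ directly via a supremum/infimum construction so that definable completeness guarantees its existence in $F$, and then verify continuity by the uniform geometric bound $\sum (2/3)^n (c/3)$, which is controlled because $F$ is a field and the geometric series is summable. Establishing that this pointwise-defined $G$ is definable and continuous, and that it genuinely restricts to $g$ on $A$, is where the locally o-minimal and definably complete hypotheses do the real work.
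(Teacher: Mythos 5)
The paper does not actually prove this theorem; it cites \cite[Lemma 6.6]{AF}. Your single-step construction is fine: the Urysohn-type function $h$ built from the distance-like functions of Proposition \ref{prop:zero} is genuinely definable, and the reduction to bounded $f$ via $\theta(t)=t/(1+|t|)$ is standard. The fatal step is the iteration. The sequence $(h_n)_{n\in\mathbb N}$ is obtained by applying the one-step lemma once for each natural number, so it is an \emph{external} sequence of definable functions, not a single definable family; there is no reason its pointwise limit (even if it existed) would be definable, and definable completeness only provides suprema and infima of \emph{definable} subsets of $F$, not limits of externally indexed sequences. Worse, the quantitative convergence fails outright in non-Archimedean models, which are the whole point of working with an arbitrary definably complete ordered field: there $(2/3)^n$ is bounded below by every positive infinitesimal for all standard $n$, so the partial sums $\sum_{k\le n}h_k$ never approximate $g$ on $A$ to within an infinitesimal tolerance, and the ``uniform limit'' $G$ simply does not exist in the order topology of $F$. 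This is precisely why the classical Tietze argument does not transfer to this setting.

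Your closing remark that one should instead ``express $G(x)$ directly via a supremum/infimum construction'' correctly identifies the only viable route, but that construction \emph{is} the entire content of the theorem, and you do not supply it. (This is what \cite[Lemma 6.6]{AF} does: the extension is produced by an explicit first-order formula involving $\inf$'s over $A$ of expressions built from $f$ and the distance to $A$, so that definable completeness applies to a definable set of values at each fixed $x$.) Also note the hypothesis is only definable completeness; local o-minimality is not assumed and plays no role here. As it stands the proposal has a genuine gap and does not constitute a proof.
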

\begin{proof}
	\cite[Lemma 6.6]{AF}.
\end{proof}

Definably complete locally o-minimal structures are studied in \cite{Fornasiero, Fuji, Fuji3, F22, F7, FKK}.
We collect the assertions used in this paper from these papers.

\begin{proposition}\label{prop:mono}
Consider a definably complete locally o-minimal structure $\mathcal F=(F,<,\ldots)$.
Let $f:(c,d) \to F$ be a definable function with $c<d$. 
There exists $e \in M$ such that $c<e<d$ and the restriction of $f$ to the open interval $(c,e)$ is continuous and monotone.
\end{proposition}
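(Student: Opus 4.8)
The plan is to study $f$ immediately to the right of $c$, which I take to be a point of $F$ (if $c=-\infty$, local o-minimality gives no information near $c$, so the statement concerns a finite endpoint). The key preliminary observation is that every one-sided limit of $f$ exists in $F\cup\{\pm\infty\}$. Indeed, fix $x\in(c,d)$ and $v\in F$; by local o-minimality the definable set $\{y\in(c,d):f(y)>v\}$ coincides, on a suitable open interval around $x$, with a finite union of points and open intervals, so on some $(x,x')$ either $f>v$ everywhere or $f\le v$ everywhere. Letting $\ell=\sup\{v\in F:f>v\text{ on some interval }(x,x')\}$, which exists in $F\cup\{\pm\infty\}$ by definable completeness, this dichotomy says exactly that $f^{+}(x):=\lim_{y\to x^{+}}f(y)=\ell$. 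The left limit $f^{-}(x)$ is obtained symmetrically, and $x\mapsto f^{\pm}(x)$ are definable.

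The decisive step is to produce an interval $(c,e_1)$ on which $f$ is continuous. The discontinuity set $D=\{x\in(c,d):f^{-}(x)\neq f(x)\text{ or }f(x)\neq f^{+}(x)\}$ is definable, so by local o-minimality at $c$ it is, near $c$, a finite union of points and open intervals. Hence either $D$ misses some interval $(c,e_1)$ -- giving continuity there -- or $D$ contains an interval which, after a further application of local o-minimality, we may assume consists entirely of points at which $f$ jumps, with a fixed sign (say $f(x)<f^{+}(x)$ for every $x$ in the interval; the remaining sign and infinite-limit cases are analogous). \textbf{This last configuration is the main obstacle, and must be excluded.} I expect this to be where definable completeness is used most essentially: from such an interval of upward right-jumps one extracts, via the functions $f$ and $f^{+}$, a definable family of pairwise disjoint nonempty open intervals (the jump gaps $(f(x),f^{+}(x))$) indexed injectively by the parameter $x$, together with the definable increasing ``collapse'' $w\mapsto\sup\{z:f(z)\le w\}$ which is constant on each such gap; the goal is to derive from this a contradiction with the completeness of $F$. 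Controlling oscillation and densely distributed discontinuities in this way is the genuinely non-routine part of the argument.

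Once continuity on $(c,e_1)$ is available, monotonicity follows by comparatively routine means. For each $x$, local o-minimality shows that exactly one of $\{y>x:f(y)>f(x)\}$, $\{y>x:f(y)=f(x)\}$, $\{y>x:f(y)<f(x)\}$ contains an interval $(x,x')$, assigning a right-germ type in $\{+,0,-\}$ and partitioning $(c,e_1)$ into three definable sets; local o-minimality at $c$ forces one of them to contain an interval $(c,e)$, so the right-germ type is constant there. On a continuity interval a constant right-germ type is rigid: the type $0$ yields a locally constant, hence constant, function, while the types $+$ and $-$ yield strictly increasing and strictly decreasing functions respectively. The passage from this local behaviour to genuine monotonicity is a standard least-upper-bound argument -- for $x_1<x_2$ one sets $s=\sup\{y\in[x_1,x_2]:f\text{ is monotone of the given type on }[x_1,y]\}$ and uses continuity at $s$ together with the germ hypothesis to conclude $s=x_2$ -- in which definable completeness supplies $s$. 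This produces the desired $e$ with $f|_{(c,e)}$ continuous and monotone.
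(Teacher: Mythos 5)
The paper does not actually prove this proposition --- it cites \cite[Theorem 2.3]{FKK} --- so the only fair comparison is with the argument given there. Your overall skeleton (existence of one-sided limits via local o-minimality and definable completeness; then continuity on a terminal subinterval; then constancy of the right-germ type near $c$ and a supremum argument to globalize monotonicity) is sensible, and the first and last steps are essentially correct. But there is a genuine gap at exactly the step you yourself flag as the main obstacle: excluding the possibility that the discontinuity set $D$ contains an interval $(c,e_1)$. You state that ``the goal is to derive from this a contradiction with the completeness of $F$'' and never derive it. This is not a routine omission: the proposition is \emph{false} for general locally o-minimal structures without definable completeness, so the entire content of the theorem is concentrated in precisely this step. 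Moreover, the route you sketch does not work as stated: the jump gaps $\bigl(f(x),f^{+}(x)\bigr)$, for $x$ ranging over an interval of upward right-jumps, need not be pairwise disjoint, because without monotonicity (or at least injectivity together with order-preservation) the values $f(x)$ and $f^{+}(x)$ for different $x$ can interleave arbitrarily. Disjointness of these gaps is a \emph{consequence} of monotonicity, which in your plan is only established \emph{after} continuity; without it the ``collapse'' $w\mapsto\sup\{z:f(z)\le w\}$ is not constant on each gap and the intended contradiction does not materialize.

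This ordering problem is why the standard proofs (including the one behind \cite[Theorem 2.3]{FKK}) run in the opposite direction: one first shows that on some subinterval $f$ is constant or injective, then that an injective definable function is strictly monotone on a further subinterval (this is where the three-way germ-type analysis belongs), and only then that a strictly monotone definable function is continuous on a subinterval. At that final stage the jump gaps really are pairwise disjoint, every point of the image is isolated from at least one side, and local o-minimality together with definable completeness lets one rule out a whole interval of jumps. If you insist on establishing continuity before monotonicity, you must supply a self-contained argument that a definable function cannot have a one-sided jump at every point of an interval; as written, that argument --- the genuinely non-routine part, as you say --- is missing.
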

\begin{proof}
It is immediate from \cite[Theorem 2.3]{FKK}.
\end{proof}

\begin{definition}
	Consider an expansion of a dense linear order without endpoints $\mathcal F=(F,<,\ldots)$.
	Let $X$ be a definable subset of $F^m$.
	A \textit{definable curve} in $X$ is a definable continuous map on an open interval $(c,d)$ into $X$.
	We also call its image a \textit{definable curve}.
	This abuse of terminology will not confuse readers.
	When we consider a definably complete locally o-minimal expansion of an ordered group, by Proposition \ref{prop:mono}, the curve $\gamma:(0,\varepsilon) \to X$ has at most one point $x$ in $F^m$ such that, for any $0<\delta<\varepsilon$,  any open neighborhood of $x$ in $X$ intersect with the definable curve $\gamma((0,\delta))$. 
	The notation $\lim_{t \to 0}\gamma(t)$ denotes this point if it exists.   
	The definable curve is \textit{completable in $X$} if $x:=\lim_{t \to 0}\gamma(t)$ exists and belongs to $X$,
	and we write $\gamma \to x$.
\end{definition}

\begin{lemma}\label{lem:cont}
Consider a definably complete locally  o-minimal expansion of an ordered group.
A definable map $f:X \to Y$ is continuous at a point $p \in X$ if and only if, for each definable curve $\gamma$ with $\gamma \to p$, we have $f \circ \gamma \to f(p)$.
\end{lemma}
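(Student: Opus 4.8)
The plan is to prove the two implications separately, with the forward direction (continuity $\Rightarrow$ curve condition) being the elementary one and the reverse direction requiring a curve-selection argument assembled from the definable choice lemma and the monotonicity theorem.

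For the forward direction I would assume $f$ is continuous at $p$ and take an arbitrary definable curve $\gamma:(0,\varepsilon)\to X$ with $\gamma\to p$. Given any open neighborhood $V$ of $f(p)$ in $Y$, continuity at $p$ produces an open neighborhood $U$ of $p$ in $X$ with $f(U)\subseteq V$. Since $\lim_{t\to 0}\gamma(t)=p$, there is $\delta\in(0,\varepsilon)$ with $\gamma((0,\delta))\subseteq U$, hence $f(\gamma((0,\delta)))\subseteq V$; as $V$ was arbitrary this is precisely the assertion $f\circ\gamma\to f(p)$. The only point requiring care is to reconcile the definition of $\gamma\to p$ with the existence of the genuine limit, but this is unproblematic because the balls $B(p,r):=\{x : |x-p|<r\}$ form a neighborhood basis of $p$ by density of the order.

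For the reverse direction I would argue contrapositively: assuming $f$ is discontinuous at $p$, I construct a definable curve $\gamma\to p$ along which $f\circ\gamma$ fails to converge to $f(p)$. Discontinuity yields some $\epsilon_0>0$ such that the definable set $W:=\{x\in X : |f(x)-f(p)|\ge \epsilon_0\}$ meets every ball $B(p,r)$ with $r>0$. Fixing $r_0>0$, I then apply Proposition \ref{prop:definable_choice} to the definable set $\{(r,x) : 0<r<r_0,\ x\in W,\ |x-p|<r\}$ with the coordinate projection onto the $r$-axis, obtaining a definable map $\sigma:(0,r_0)\to W$ with $|\sigma(r)-p|<r$. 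The estimate $|\sigma(r)-p|<r$ forces $\sigma(r)\to p$ as $r\to 0$, but $\sigma$ need not be continuous; to repair this I apply Proposition \ref{prop:mono} to each of the finitely many coordinate functions of $\sigma$ near the left endpoint, shrinking $(0,r_0)$ to an interval $(0,r_1)$ on which every coordinate is continuous and monotone. The restriction $\gamma:=\sigma|_{(0,r_1)}$ is then a genuine definable curve in $W\subseteq X$, completable with $\gamma\to p$ since $p\in X$.

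I conclude by noting that $\gamma(t)\in W$ gives $|f(\gamma(t))-f(p)|\ge \epsilon_0$ for all $t$, so the curve $f\circ\gamma$ cannot have $f(p)$ as a limit; thus $f\circ\gamma\not\to f(p)$, contradicting the curve condition. I expect the only genuine obstacle to be the reverse direction's curve construction — specifically, upgrading the merely definable section $\sigma$ furnished by definable choice into an honest continuous curve — and this is exactly where Proposition \ref{prop:mono} is indispensable; everything else reduces to unwinding the definitions of continuity and of $\gamma\to p$.
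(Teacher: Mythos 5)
Your proposal is correct and follows essentially the same route as the paper's proof: contrapositive for the `if' direction, Proposition \ref{prop:definable_choice} to select a definable section landing in the bad set near $p$, and Proposition \ref{prop:mono} to upgrade it to a continuous curve. The only (harmless) difference is that the paper first uses local o-minimality to find an interval $(0,\delta)$ inside the set of distances $\{|x-p|\}$ and selects $\gamma$ with $|\gamma(t)-p|=t$, whereas you use the inequality $|x-p|<r$ so that the fiber over each $r$ is nonempty automatically.
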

\begin{proof}
	The `only if' part is obvious.
	We demonstrate the `if' part.
	Assume that $f$ is not continuous at $p$.
	There is $\varepsilon>0$ such that the definable set $\{|x-p|\;|\;x \in X,\ |f(x)-f(p)| \geq \varepsilon\}$ contains arbitrary small elements.
	By local o-minimality, there exists a positive $\delta>0$ such that the open interval $(0,\delta)$ is contained in the above set.
	We can take a definable map $\gamma:(0,\delta) \to X$ such that $|\gamma(t)-p|=t$ and $|f(\gamma(t)) - f(p)| \geq \varepsilon$  for $0<t<\delta$  by Proposition \ref{prop:definable_choice}.
	By Proposition \ref{prop:mono}, we may assume that $\gamma$ is continuous by taking smaller $\delta>0$ if necessary.
	It is obvious that $\gamma \to p$ but not $f \circ \gamma \to f(p)$.
\end{proof}

\begin{proposition}\label{prop:mono2}
	Consider a definably complete locally o-minimal expansion of an group $\mathcal F=(F,<,\ldots)$.
	Let $X$ be a bounded, closed and definable subset of $F^n$.
	Any definable curve $\gamma:(0,\varepsilon) \to X$ is completable in $X$. 
\end{proposition}
\begin{proof}
	Let $\pi_i:F^n \to F$ be the coordinate projection onto the $i$-th coordinate for $1 \leq i \leq n$.
	Taking a smaller $\varepsilon>0$ if necessary, we may assume that $\pi_i \circ \gamma$ is monotone for $1 \leq i \leq n$ by Proposition \ref{prop:mono}.
	Set $I:=\{1 \leq i \leq n\;|\; \pi_i \circ \gamma \text{ is strictly increasing}\}$.
	Put $x_i=\sup\{\pi_i(\gamma(t))\;|\;0<t<\varepsilon\}$ for $i \in I$ and put $x_i=\inf\{\pi_i(\gamma(t))\;|\;0<t<\varepsilon\}$ otherwise.
	We have $x_i \neq \pm \infty$ because $X$ is bounded.
	Set $x=(x_1,\ldots, x_n)$.
	It is obvious that $x = \lim_{t \to 0}\gamma(t)$, and we have $x \in X$ because $X$ is closed.
	It means that $\gamma$ is completable in $X$.
\end{proof}

\begin{proposition}\label{prop:curve_selection}
	Consider a definably complete locally o-minimal expansion of an ordered group $\mathcal F=(F,<,+,0,\ldots)$.
	Let $X$ be a definable subset of $F^n$ which is not closed.
	Take a point $a \in \mycl(X) \setminus X$.
	There exist a small positive $\varepsilon$ and a definable continuous map $\gamma:(0,\varepsilon) \rightarrow X$ such that $\lim_{t \to +0}\gamma(t)=a$.
\end{proposition}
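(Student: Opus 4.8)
The plan is to build the curve from the distance-to-$a$ function, using definable choice to get a (possibly discontinuous) parametrization and then the one-variable monotonicity theorem to repair continuity. First I would introduce the definable function $r:X\to F$ given by $r(x)=|x-a|$, which is continuous. Since $a\notin X$ it is strictly positive on $X$, and since $a\in\mycl(X)$ every box $\{x\mid |x-a|<\delta\}$ meets $X$; hence the definable set $r(X)$ consists of positive elements, does not contain $0$, yet meets every interval $(0,\delta)$, i.e.\ it accumulates at $0$.

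The crucial step is to extract a genuine initial interval inside $r(X)$. Applying local o-minimality at the point $0$, there is an open interval $I\ni 0$ such that $r(X)\cap I$ is a finite union of points and open intervals. Because $r(X)$ lies in the positive part, omits $0$, and meets every $(0,\delta)$, the finitely many points of this decomposition have a positive minimum and therefore cannot account for the accumulation at $0$; consequently one of the finitely many open intervals must have left endpoint $0$. This yields some $\varepsilon>0$ with $(0,\varepsilon)\subseteq r(X)$.

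With $(0,\varepsilon)\subseteq r(X)$ established, the definable choice lemma (Proposition~\ref{prop:definable_choice}) furnishes a definable map $\gamma_0:(0,\varepsilon)\to X$ satisfying $|\gamma_0(t)-a|=t$ for every $t$. This $\gamma_0$ need not be continuous, so I would fix it coordinatewise: for each $1\le i\le n$, Proposition~\ref{prop:mono} applied to the coordinate function $\pi_i\circ\gamma_0$ produces an interval $(0,e_i)$ on which it is continuous (and monotone). Taking $\varepsilon':=\min_{1\le i\le n}e_i$ and setting $\gamma:=\gamma_0|_{(0,\varepsilon')}$ gives a map all of whose coordinates are continuous, hence a definable continuous curve in $X$. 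Finally I would verify convergence: since $|\gamma(t)-a|=t$, for any open neighborhood of $a$ containing a box $\{x\mid |x-a|<\delta\}$ we have $\gamma((0,\delta))$ inside that box, so $\lim_{t\to+0}\gamma(t)=a$ in the sense defined before the statement.

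The main obstacle is the second step, namely passing from ``$r(X)$ accumulates at $0$'' to the stronger ``$(0,\varepsilon)\subseteq r(X)$'': a priori $r(X)$ could approach $0$ through a shrinking sequence of short intervals or isolated points, and it is precisely the finiteness guaranteed by local o-minimality near a single point that rules this out. Once the initial interval is secured, the remainder is routine, the continuity repair being a direct application of Proposition~\ref{prop:mono} coordinate by coordinate.
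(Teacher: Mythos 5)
Your proof is correct. Note that the paper itself does not prove Proposition~\ref{prop:curve_selection}; it simply cites \cite[Corollary 2.9]{F7}, so there is no in-text argument to compare against. What your write-up provides is a self-contained derivation using only tools already stated in the paper: the distance function $r(x)=|x-a|$, the observation that $r(X)$ is a positive definable subset of $F$ accumulating at $0$, local o-minimality at $0$ to upgrade accumulation to an honest initial interval $(0,\varepsilon)\subseteq r(X)$, Proposition~\ref{prop:definable_choice} to select a section $\gamma_0$ of $r$, and Proposition~\ref{prop:mono} applied coordinatewise to repair continuity near $0$. Each step checks out: the intervals in the local o-minimal decomposition of $r(X)$ near $0$ lie in $(0,\infty)$ since $0\notin r(X)$, so finiteness forces one of them to have left endpoint exactly $0$; and the final convergence is immediate from $|\gamma(t)-a|=t$. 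This is exactly the pattern the paper uses elsewhere (e.g.\ in the proofs of Lemma~\ref{lem:cont} and Lemma~\ref{lem:local_bound}), so the argument fits the paper's toolkit and could replace the external citation.
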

\begin{proof}
	See \cite[Corollary 2.9]{F7}. 
\end{proof}

We recall the definition of dimension.
\begin{definition}[Dimension]\label{def:dim}
	Consider an expansion of a densely linearly order without endpoints $\mathcal F=(F,<,\ldots)$.
	We consider that $F^0$ is a singleton with the trivial topology.
	Let $X$ be a nonempty definable subset of $F^n$.
	The dimension of $X$ is the maximal nonnegative integer $d$ such that $\pi(X)$ has a nonempty interior for some coordinate projection $\pi:F^n \rightarrow F^d$.
	We set $\dim(X)=-\infty$ when $X$ is an empty set.
\end{definition}

\begin{lemma}\label{lem:dim0}
	Consider a definably complete locally o-minimal structure.
	A definable set is of dimension zero if and only if it is discrete.
	A definable set of dimension zero is always closed.
\end{lemma}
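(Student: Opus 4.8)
The plan is to reduce everything to the one-variable situation, where local o-minimality applies directly, and then to propagate the information through coordinate projections and definable curves. First I would record the one-variable case: for a definable $Y \subseteq F$, the set $Y$ is discrete if and only if it has empty interior. Indeed, if $\myint(Y)=\emptyset$, then for each $a \in Y$ local o-minimality produces an open interval $I \ni a$ on which $Y \cap I$ is a finite union of points and open intervals; an open interval among them would lie in $\myint(Y)$, so $Y \cap I$ is finite and $a$ is isolated. Conversely, a nonempty open subset of $F$ contains an open interval, which, the order being dense, has no isolated point, so a discrete $Y$ has empty interior.

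For the first assertion, I would translate $\dim X = 0$ into a statement about projections to $F$. By Definition \ref{def:dim}, $\dim X \geq 1$ means that some coordinate projection $\pi : F^n \to F^d$ with $d \geq 1$ has $\pi(X)$ of nonempty interior; post-composing with a coordinate projection $F^d \to F$ (an open map) shows this is equivalent to: some coordinate projection $\pi_i : F^n \to F$ has $\pi_i(X)$ of nonempty interior. Hence, for nonempty $X$, one has $\dim X = 0$ if and only if every $\pi_i(X)$ has empty interior, i.e.\ (by the one-variable case) every $\pi_i(X)$ is discrete. It then remains to see that this last condition is equivalent to discreteness of $X$. If every $\pi_i(X)$ is discrete, a box argument works: given $x=(x_1,\dots,x_n) \in X$, choose for each $i$ an open interval $I_i \ni x_i$ with $\pi_i(X) \cap I_i = \{x_i\}$; then $X \cap \prod_i I_i = \{x\}$, so $x$ is isolated. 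For the converse I would argue contrapositively: if $\dim X \geq 1$, pick $\pi_i$ with $\pi_i(X) \supseteq (c,d)$, use the definable choice lemma (Proposition \ref{prop:definable_choice}) to select a definable section $s : (c,d) \to X$, and shrink the interval by Proposition \ref{prop:mono} so that $s$ is continuous. Since $\pi_i \circ s = \mathrm{id}$, the map $s$ is injective, and continuity forces each $s(t_0)$ to be a non-isolated point of $s((c,d)) \subseteq X$, so $X$ is not discrete.

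For the second assertion, having identified ``dimension zero'' with ``discrete,'' I would show that a discrete definable $X$ is closed. Suppose not, and pick $a \in \mycl(X) \setminus X$. Curve selection (Proposition \ref{prop:curve_selection}) yields a definable continuous $\gamma : (0,\varepsilon) \to X$ with $\lim_{t \to +0}\gamma(t) = a$. After shrinking $\varepsilon$ via Proposition \ref{prop:mono}, each coordinate $\pi_i \circ \gamma$ is constant or strictly monotone. If all are constant, then $\gamma$ is constant with value $a$, forcing $a \in X$, a contradiction. Otherwise $\gamma$ is injective and continuous on $(0,\varepsilon)$, so for any fixed $t_0$ the points $\gamma(t) \to \gamma(t_0)$ as $t \to t_0$ are distinct from $\gamma(t_0)$; thus $\gamma(t_0)$ is a non-isolated point of $\gamma((0,\varepsilon)) \subseteq X$, contradicting discreteness.

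The main obstacle, common to the harder direction of the first assertion and to the second assertion, is to manufacture from non-discreteness (resp.\ non-closedness) an honest one-parameter definable curve lying inside $X$ and to certify that its image has a non-isolated point; this is exactly where the definable choice lemma, curve selection, and the monotonicity of definable one-variable functions do the real work. A secondary point is that these auxiliary results are stated for expansions of ordered groups, which is harmless here since the ambient structures are expansions of ordered fields; one should also dispatch the trivial empty-set case separately, as $\emptyset$ is vacuously discrete but has dimension $-\infty$.
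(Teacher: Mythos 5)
Your proof is essentially correct, but it necessarily takes a different route from the paper, which proves nothing here: Lemma \ref{lem:dim0} is simply quoted from \cite[Proposition 2.8(1)]{FKK}. Your argument reassembles the statement from the other ingredients the paper does state (local o-minimality in one variable, Definition \ref{def:dim}, Proposition \ref{prop:definable_choice}, Proposition \ref{prop:mono}, Proposition \ref{prop:curve_selection}), and all the individual steps check out: the one-variable equivalence of ``empty interior'' and ``discrete,'' the reduction of $\dim X\ge 1$ to a one-coordinate projection, the box argument, and the construction of a continuous injective definable curve inside $X$ to witness a non-isolated point. Your reading of ``monotone'' in Proposition \ref{prop:mono} as ``constant or strictly monotone'' after shrinking is the same reading the paper itself uses (e.g.\ in the proofs of Propositions \ref{prop:mono2} and \ref{prop:properness}), so that is not a gap. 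Two caveats are worth recording. First, the lemma as stated assumes only a definably complete locally o-minimal \emph{structure}, whereas your proof of the harder direction of the first assertion and of the second assertion invokes Propositions \ref{prop:definable_choice} and \ref{prop:curve_selection}, which require an ordered-group reduct; so you prove the lemma only in that setting. This is harmless for every use of the lemma in this paper, but it is a genuine loss of generality relative to the statement, and it is presumably why the authors defer to \cite{FKK}, where the result is obtained via the dimension calculus without choice or curve selection. Second, for the closedness assertion there is a shorter route inside the paper's own toolkit: if $X$ is discrete and nonempty then $\dim X=0$, and Lemma \ref{lem:dim} forces $\dim\partial X<0$, i.e.\ $\partial X=\emptyset$ --- though that lemma is likewise only cited, so your curve-selection argument is the more self-contained of the two.
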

\begin{proof}
	See \cite[Proposition 2.8(1)]{FKK}.
\end{proof}

\begin{lemma}\label{lem:dim}
	Consider a definably complete locally o-minimal structure.
	The inequality $\dim \partial X < \dim X$ holds true for each definable set $X$.
\end{lemma}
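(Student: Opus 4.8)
The plan is to prove the equivalent inequality $\dim(\mycl(X)\setminus X)<\dim X$. Write $Y:=\mycl(X)\setminus X$ for the frontier and $d:=\dim X$. First I would dispose of the easy cases: we may assume $X$ is not closed, for otherwise $Y=\emptyset$ and $\dim Y=-\infty<\dim X$ (with $X\neq\emptyset$). Since every zero-dimensional definable set is closed by Lemma \ref{lem:dim0}, this forces $d\geq 1$. A clean observation then settles the top dimension: $Y$ has empty interior, because a nonempty open $U\subseteq Y$ would consist of points of $\mycl(X)$ each admitting the neighbourhood $U$ disjoint from $X$, which is absurd. Hence $\dim Y\leq n-1$, which already gives the claim when $d=n$.

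Now suppose for contradiction that $e:=\dim Y\geq d$; by the previous observation $e\leq n-1$, so a coordinate projection $\rho\colon F^n\to F^e$ with $\myint(\rho(Y))\neq\emptyset$ leaves at least one coordinate direction free. Fix an open box $B\subseteq\myint(\rho(Y))$. By the definable choice lemma (Proposition \ref{prop:definable_choice}) I would choose a definable section $\sigma\colon B\to Y$ with $\rho\circ\sigma=\mathrm{id}_B$. Each value $\sigma(w)$ lies in $\mycl(X)\setminus X$, so Proposition \ref{prop:curve_selection} provides curves in $X$ tending to $\sigma(w)$; to make this uniform I would apply Proposition \ref{prop:definable_choice} to
\[
R:=\{(w,t,x)\in B\times F\times X\;|\;t>0,\ |x-\sigma(w)|=t\},
\]
obtaining a definable $g(w,t)\in X$ with $|g(w,t)-\sigma(w)|=t$. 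For fixed $w$ the definable set of realised distances $\{|x-\sigma(w)|\;|\;x\in X\}$ accumulates at $0$ (since $\sigma(w)\in\mycl(X)$) but omits $0$ (since $\sigma(w)\notin X$), so by local o-minimality it contains an interval $(0,\delta(w))$ with $\delta(w)>0$. Thus $g$ is defined on $D:=\{(w,t)\;|\;w\in B,\ 0<t<\delta(w)\}$, and $W:=g(D)\subseteq X$.

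The goal is then to conclude $\dim W=e+1$, whence $d=\dim X\geq e+1>e\geq d$, the desired contradiction. The main obstacle is precisely this dimension count. Over each $w\in B$ the fibre of $D$ is an interval, so $\dim D=e+1$, and the graph map $(w,t)\mapsto(w,t,g(w,t))$ is injective; the difficulty is to prevent $g$ from collapsing dimension once the auxiliary coordinates $w$ are forgotten. I would control this through the fibre-dimension additivity of $\dim$ and the behaviour of dimension under definable finite-to-one maps (standard in the cited development of dimension theory for these structures), arguing that the parametrisation $g$ is generically finite-to-one onto $W$. This is the delicate point because, in the absence of cell decomposition, the ``new'' direction along which the curves leave $Y$ need not be a fixed coordinate, so one cannot simply project $W$ onto $F^{e+1}$ using the free coordinate.

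Alternatively, the whole argument can be organised as an induction on the ambient dimension $n$. One projects off one coordinate $p\colon F^n\to F^{n-1}$, treats the one-variable frontier fibrewise (for $n=1$, local o-minimality shows $Y$ is locally finite, hence discrete, so $\dim Y\leq 0<1=\dim X$), and combines this with the inductive hypothesis on the base $p(X)$ through the same additivity of $\dim$. In this version the technical heart is the \emph{uniform} local finiteness of the fibres supplied by local o-minimality, together with the bookkeeping that recombines the base contribution and the fibre contribution to the dimension of $Y$; I expect this additivity, rather than curve selection itself, to be where the real work lies.
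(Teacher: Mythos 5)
The paper does not actually prove this lemma; it cites \cite[Proposition 2.8(8)]{FKK}, whose proof goes through the decomposition into quasi-special submanifolds (the same machinery invoked in Lemma \ref{lem:openInX}). Your preliminary reductions are fine: $X$ not closed forces $d\geq 1$ by Lemma \ref{lem:dim0}, and the observation that $Y=\mycl(X)\setminus X$ has empty interior (hence $\dim Y\leq n-1$) is correct and disposes of the case $d=n$. The problem is the step you yourself flag as the main obstacle, and it is a genuine gap, not a routine verification. For a point $x\in W=g(D)$ the fibre $g^{-1}(x)$ is $\{(w,t)\in D\;|\;g(w,t)=x\}$; the relation $|x-\sigma(w)|=t$ pins down $t$ in terms of $w$ and $x$ but places no constraint on $w$, so a section $g$ produced by Proposition \ref{prop:definable_choice} may send a set of parameters $w$ of dimension up to $e$ to the same point $x$. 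There is no reason such a section is generically finite-to-one, and fibre-dimension additivity then yields only $e+1\leq \dim W+e$, i.e.\ $\dim W\geq 1$ --- far short of the $\dim W=e+1$ you need for the contradiction. Choosing $g$ more carefully (nearest point, lexicographic minimality, etc.) does not obviously repair this, since distinct $\sigma(w)$ can all select the same nearby point of $X$.

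Your alternative sketch (induction on the ambient dimension with a fibrewise use of local o-minimality and additivity of dimension) is closer to how these dimension facts are actually established in \cite{FKK}, but as written it defers exactly the same work: the uniform fibrewise analysis and the additivity bookkeeping are the content of the decomposition theorem you are trying to avoid. So the proposal is an honest reduction of the lemma to an unproved dimension-theoretic claim rather than a proof; to complete it you would either have to establish that a suitable parametrisation of a neighbourhood of $Y$ in $X$ is definably finite-to-one, or simply invoke the quasi-special decomposition as the cited source does.
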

\begin{proof}
	See \cite[Proposition 2.8(8)]{FKK}.
\end{proof}

\begin{lemma}\label{lem:dim2}
	Consider a definably complete locally o-minimal structure.
	Let $f:X \to Y$ be a definable map.
	The inequality $\dim f(X) \leq \dim X$ holds true.
\end{lemma}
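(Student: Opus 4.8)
The plan is to reduce the statement to the case where $f$ is a coordinate projection by passing to the graph, and then to analyse the effect of adjoining one coordinate at a time. First I would record the elementary observation that coordinate projections never increase dimension: if $p:F^N\to F^k$ is a coordinate projection and $A\subseteq F^N$ is definable with $\dim p(A)=d$, then by Definition~\ref{def:dim} some coordinate projection $\sigma:F^k\to F^d$ makes $\sigma(p(A))$ have nonempty interior; since $\sigma\circ p$ is itself a coordinate projection $F^N\to F^d$ and $(\sigma\circ p)(A)=\sigma(p(A))$, Definition~\ref{def:dim} gives $\dim A\ge d$, that is, $\dim p(A)\le\dim A$.

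Writing $X\subseteq F^n$ and identifying the codomain with a subset of $F^m$, I would then consider the graph $\Gamma_f=\{(x,f(x)) : x\in X\}\subseteq F^{n+m}$. As $f(X)$ is the image of $\Gamma_f$ under the projection onto the last $m$ coordinates, the observation above yields $\dim f(X)\le\dim\Gamma_f$, so it suffices to show $\dim\Gamma_f\le\dim X$. Writing $f=(f_1,\dots,f_m)$ and noting that $\Gamma_{(f_1,\dots,f_j)}$ is the graph of the function $(x,f_1(x),\dots,f_{j-1}(x))\mapsto f_j(x)$ on $\Gamma_{(f_1,\dots,f_{j-1})}$, iterating the one-variable case telescopes to $\dim\Gamma_f\le\dim\Gamma_{(f_1,\dots,f_{m-1})}\le\cdots\le\dim X$. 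Thus everything reduces to the single claim that a definable function $g:A\to F$ on $A\subseteq F^N$ satisfies $\dim\Gamma_g\le\dim A$.

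To prove this claim I would first dispose of the zero-dimensional case, namely that the image of a discrete (equivalently, by Lemma~\ref{lem:dim0}, zero-dimensional) definable set under a definable map is again zero-dimensional. Assuming this, set $k=\dim A$. The projection $\Gamma_g\to A$ and the first step give $\dim\Gamma_g\ge k$, and since the codomain is one-dimensional a projection realizing $\dim\Gamma_g$ can involve at most one new coordinate, so it remains only to exclude $\dim\Gamma_g=k+1$. If the extremal projection omits the value coordinate it is a projection of $A$ and gives $\dim A\ge\dim\Gamma_g$ at once; otherwise it keeps the value coordinate together with $k$ coordinates $S'$ of $A$, so $\pi(\Gamma_g)$ contains a box $U\times(c,c')$ with $U\subseteq F^{k}$ open. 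Then for every $u\in U$ the fibre $A_u=\{x\in A : \pi_{S'}(x)=u\}$ satisfies $g(A_u)\supseteq(c,c')$, whence $g(A_u)$ has nonempty interior and, by the zero-dimensional case, $A_u$ is not discrete; Lemma~\ref{lem:dim0} then forces $\dim A_u\ge1$.

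The main obstacle is the concluding step: deducing from ``all fibres over the open $k$-dimensional set $U$ have dimension at least $1$'' the inequality $\dim A\ge k+1$, which contradicts $\dim A=k$. This fibre-additivity phenomenon is the genuine content of the lemma. I expect to handle it by a definable choice argument (Proposition~\ref{prop:definable_choice}) producing, uniformly over $u\in U$, a definable curve inside $A_u$ along which $g$ is non-constant; local o-minimality together with the monotonicity statement of Proposition~\ref{prop:mono} guarantees genuine one-dimensional spread in one further coordinate direction, and assembling this family yields a definable subset of $A$ whose projection to $k+1$ suitably chosen coordinates has nonempty interior. The delicate points are the uniform selection of the extra coordinate across the family and the verification that the assembled set indeed attains dimension $k+1$, which is where the dimension theory of locally o-minimal structures must be used most carefully.
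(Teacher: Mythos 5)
The paper does not prove this lemma at all: it simply cites \cite[Proposition 2.8(6)]{FKK}, so any self-contained argument would necessarily differ in route. Your reductions are fine as far as they go: the observation that coordinate projections cannot increase dimension is immediate from Definition~\ref{def:dim}, and the passage to the graph together with the telescoping over the components of $f$ correctly reduces everything to showing $\dim\Gamma_g\le\dim A$ for a single definable $g:A\to F$. But the proposal then stops exactly where the mathematical content begins, and you say so yourself. Two steps are asserted rather than proved, and neither is routine in this setting.

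First, the ``zero-dimensional case'' --- that a definable map sends a discrete definable set to a discrete set --- is a genuine theorem here, not a disposable preliminary: locally o-minimal structures admit infinite discrete definable sets, and ruling out a definable map from such a set onto a somewhere-dense image is precisely the kind of tameness statement that \cite{FKK} establishes; it cannot be waved through. Second, and more seriously, the concluding fibre-additivity step (``all fibres over an open $U\subseteq F^k$ have dimension $\ge 1$, hence $\dim A\ge k+1$'') is, as you acknowledge, equivalent in difficulty to the lemma itself. The sketch you offer does not close it: the coordinate direction in which each fibre $A_u$ spreads may vary with $u$; even after a pigeonhole reduction to a single coordinate $j$ on a full-dimensional subset of $U$, local o-minimality only gives you, \emph{locally at each point and non-uniformly in $u$}, that $\pi_j(A_u)$ is a finite union of points and intervals; and to extract an open box in $F^{k+1}$ you need definably chosen interval endpoints $a(u)<b(u)$ that are continuous on some open subset of $U$, which requires a generic-continuity theorem for definable functions of several variables --- another nontrivial ingredient you have not supplied. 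As it stands the argument reduces the lemma to unproven statements of comparable depth, so there is a genuine gap.
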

\begin{proof}
	See \cite[Proposition 2.8(6)]{FKK}.
\end{proof}

\begin{lemma}\label{lem:openInX}
Consider a definably complete locally o-minimal structure and definable sets $S$, $B$ and $X$ with $S \subseteq X$, $B \subseteq X$, $\dim S=\dim X$ and $\dim B<\dim X$.
There exists a definable subset $S_0$ of $S$ such that $S_0$ is open in $X$ and $\dim S \setminus S_0<\dim S$ and $S_0 \cap B=\emptyset$.
\end{lemma}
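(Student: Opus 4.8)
The plan is to take $S_0 := \myint_X(S) \setminus \mycl_X(B)$ and check the required properties, of which three are immediate. Indeed $S_0$ is the intersection of the two sets $\myint_X(S)$ and $X \setminus \mycl_X(B)$, both open in $X$, so $S_0$ is open in $X$; we have $S_0 \subseteq \myint_X(S) \subseteq S$; and $S_0 \cap B = \emptyset$ because $S_0$ is disjoint from $\mycl_X(B) \supseteq B$. The whole content is therefore the estimate $\dim(S \setminus S_0) < \dim S$, and since $\dim S = \dim X =: d$ this means $\dim(S \setminus S_0) < d$.

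I would start from the decomposition $S \setminus S_0 = (S \setminus \myint_X(S)) \cup (S \cap \mycl_X(B))$ and bound the two pieces separately, using that dimension is monotone under inclusion (immediate from Definition \ref{def:dim}) and that a finite union does not raise dimension. For the piece $S \cap \mycl_X(B)$, note that passing to the closure does not increase dimension: from $\mycl_{F^n}(B) = B \cup \partial_{F^n}(B)$ and $\dim \partial_{F^n}(B) < \dim B$ (Lemma \ref{lem:dim}) we get $\dim \mycl_{F^n}(B) = \dim B$, hence $\dim(S \cap \mycl_X(B)) \le \dim \mycl_{F^n}(B) = \dim B < d$.

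The main obstacle is the piece $S \setminus \myint_X(S)$, the points of $S$ that are not interior to $S$ in $X$; the subtlety is that the interior is taken relative to $X$ rather than to $F^n$, so Lemma \ref{lem:dim} is not directly available. The key observation is that a point of $S$ fails to lie in $\myint_X(S)$ precisely when each of its $X$-neighborhoods meets $X \setminus S$, so that $S \setminus \myint_X(S) = S \cap \mycl_X(X \setminus S)$; as $S$ and $X \setminus S$ are disjoint, this is contained in the $X$-frontier $\partial_X(X \setminus S) = \mycl_X(X \setminus S) \setminus (X \setminus S)$. A one-line check then gives $\partial_X(X \setminus S) \subseteq \partial_{F^n}(X \setminus S)$: a point of the left-hand side lies in $\mycl_{F^n}(X \setminus S)$ and, being a point of $S$, not in $X \setminus S$. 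Hence, by monotonicity and Lemma \ref{lem:dim}, $\dim(S \setminus \myint_X(S)) \le \dim \partial_{F^n}(X \setminus S) < \dim(X \setminus S) \le \dim X = d$ (the case $X \setminus S = \emptyset$ being trivial, since then the set is empty). Combining the two bounds yields $\dim(S \setminus S_0) < d = \dim S$, as required.
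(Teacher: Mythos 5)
Your proof is correct, but it takes a genuinely different route from the paper. The paper invokes the decomposition theorem into quasi-special submanifolds from [FKK]: it partitions $F^n$ compatibly with $S$, $B$ and $X$ by quasi-special submanifolds satisfying the frontier condition, and takes $S_0$ to be the union of the top-dimensional pieces lying in $S$; openness in $X$ then comes from the frontier condition, disjointness from $B$ from $\dim B<\dim X$, and the dimension drop from the cited dimension calculus. You instead give the explicit choice $S_0=\myint_X(S)\setminus\mycl_X(B)$ and reduce everything to elementary point-set manipulations plus Lemma \ref{lem:dim}: your identity $S\setminus\myint_X(S)=S\cap\mycl_X(X\setminus S)\subseteq\partial_{F^n}(X\setminus S)$ is the right observation to convert the relative-interior issue into a frontier estimate in the ambient space, and the hypotheses $\dim S=\dim X$ and $\dim B<\dim X$ enter exactly where they should. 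The only ingredients you use beyond what is explicitly stated in the paper are monotonicity of dimension under inclusion (immediate from Definition \ref{def:dim}) and the fact that a union of two definable sets of dimension $<d$ has dimension $<d$; both are part of the standard dimension calculus of [FKK, Proposition 2.8] that the paper already relies on. What your argument buys is notable: the paper remarks that the decomposition into quasi-special submanifolds is used \emph{only} in this lemma, so your proof would let one dispense with that machinery entirely; what the paper's approach buys is a one-line verification once the (heavier) structural theorem is granted.
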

\begin{proof}
	Decomposition of definable sets into quasi-special submanifolds is used only here.
	We omit the explanation on it.
	Readers who have interest in it should consult \cite[Section 2]{FKK} or \cite[Section 4]{F22}.
	
	Let $F$ be the universe of the structure and $F^n$ be the ambient space of $X$.
	Apply the decomposition theorem into quasi-special submanifolds \cite[Proposition 2.11]{FKK}.
	We can get a decomposition of $F^n$ into quasi-special submanifolds $C_1, \ldots, C_N$ partitioning $S$, $B$ and $X$ satisfying the frontier condition.
	Let $S_0$ be the union of the quasi-special submanifolds among $C_1, \ldots, C_N$ of dimension $=\dim X$.
	It is obvious that $B \cap S_0 = \emptyset$.
	We have $\dim S \setminus S_0 < \dim S$ by \cite[Proposition 2.8(5)]{FKK}.
	It is also obvious that $S_0$ is open in $X$ by the definitions of quasi-special submanifold and the frontier condition.
\end{proof}

The notion of local boundedness is used in Section \ref{sec:def_quo}.
\begin{definition}
	Consider an expansion of a dense linear order without endpoints $\mathcal F=(F,<,\ldots)$.
	Let $X \subseteq F^m$ and $Y \subseteq F^n$ be definable sets and $f:X \to Y$ be a definable map which is not necessarily continuous.
	We say that $f$ is \textit{locally bounded} if, for any point $x \in X$, there exists a definable open neighborhood $U$ of $x$ in $X$ such that $f(U)$ is bounded.
\end{definition}

\begin{lemma}\label{lem:local_bound}
	Consider a definably complete locally o-minimal expansion of an ordered field $\mathcal F=(F,<,+,\cdot,0,1,\ldots)$.
	Let $K$ be a bounded closed definable set and $f:K \to F$ be a locally bounded definable function.
	Then, $f$ is bounded.
\end{lemma}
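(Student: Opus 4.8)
The plan is to argue by contradiction. Suppose $f$ is not bounded; then $\sup_{x\in K}|f(x)|=+\infty$, so for every $r>0$ there exists $x\in K$ with $|f(x)|>r$. My strategy is to produce a single definable curve in $K$ along which $|f|$ tends to infinity, force it to accumulate at a point of $K$ using the fact that $K$ is bounded and closed, and then contradict local boundedness at that point.

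First I would set up the selection. Let $F^n$ be the ambient space of $K$, consider the definable set $Y=\{(r,x)\in F\times K : r>0,\ |f(x)|>r\}$, and let $\pi:F^{1+n}\to F$ be the coordinate projection onto the first coordinate. Since $f$ is unbounded, $\pi(Y)=(0,\infty)$. By the definable choice lemma (Proposition \ref{prop:definable_choice}) there is a definable section, i.e.\ a definable map $\psi:(0,\infty)\to K$ with $|f(\psi(r))|>r$ for all $r>0$.

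Next I would reparametrize into a curve approaching $0$. Using that $\mathcal F$ is an ordered field, define $\gamma(s)=\psi(1/s)$ for $s\in(0,\varepsilon)$; then $|f(\gamma(s))|>1/s$ for all such $s$. Applying Proposition \ref{prop:mono} to each coordinate function of $\gamma$ and shrinking $\varepsilon$ finitely many times, I may assume $\gamma$ is continuous, so that $\gamma:(0,\varepsilon)\to K$ is a definable curve. Because $K$ is bounded and closed, Proposition \ref{prop:mono2} guarantees that $\gamma$ is completable in $K$, so the point $p:=\lim_{s\to 0}\gamma(s)$ exists and lies in $K$.

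Finally, local boundedness at $p$ yields the contradiction. There is a definable open neighborhood $V$ of $p$ in $K$ and a bound $M>0$ with $|f(x)|<M$ for all $x\in V$. Since $\gamma\to p$, we have $\gamma(s)\in V$ for all sufficiently small $s>0$, whence $|f(\gamma(s))|<M$; but $|f(\gamma(s))|>1/s\to +\infty$ as $s\to 0$, which is absurd. Hence $f$ must be bounded. The essential point, and the only place where both hypotheses on $K$ are used, is the passage from the selection $\psi$ to an accumulation point $p\in K$ via completability; once $p$ is in hand, local boundedness closes the argument immediately.
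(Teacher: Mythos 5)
Your proposal is correct and follows essentially the same route as the paper's proof: select a definable map $t\mapsto\rho(t)\in K$ with $|f(\rho(t))|>1/t$ by definable choice, make it continuous by Proposition \ref{prop:mono}, use closedness and boundedness of $K$ to complete the curve to a limit point $p\in K$, and contradict local boundedness at $p$. The only cosmetic difference is that you apply definable choice directly to $\{(r,x):|f(x)|>r\}$ instead of first reducing (as the paper does) to the case where $f(K)$ contains an interval $(c,\infty)$.
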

\begin{proof}
	We demonstrate the contraposition.
	Assume that $f$ is not bounded.
	We may assume that $f(K)$ contains an open interval of the form $(c,\infty)$ by considering $-f$ if necessary.
	By Proposition \ref{prop:definable_choice}, we can construct a definable map $\rho:(0,\delta) \to K$ such that $f(\rho(t))>1/t$.
	By Proposition \ref{prop:mono}, we may assume that $\rho$ is continuous by taking a smaller $\delta>0$ if necessary.
	Since $K$ is closed and bounded, we have $x=\lim_{t \to 0}\rho(t) \in K$.
	The function $f$ is not locally bounded at $x$. 
\end{proof}

The following definitions are found in \cite[Chapter 6, Definition 4.4]{D}.
We study basic properties of definably proper and definably identifying maps in the rest of this section.
Many of them are already demonstrated in \cite[Chapter 6, Section 4]{D} in the o-minimal setting.
The proofs are almost the same in the locally o-minimal case, but we give a complete proof here for the sake of readers' convenience.
\begin{definition}
	We consider an expansion of a dense linear order without endpoints $\mathcal F=(F,<,\ldots)$.
	Let $X \subseteq F^m, Y \subseteq F^n$ be definable sets and $f:X \to Y$ be a definable continuous map.
	The map $f$ is \textit{definably proper} if for any closed bounded definable set $K$ in $F^n$ with $K \subseteq Y$,
	the inverse image $f^{-1}(K)$ is bounded and closed in $F^m$.
	It is \textit{definably identifying} if it is surjective and, for any definable subset $K$ in $Y$, 
	$K$ is closed in $Y$ whenever $f^{-1}(K)$ is closed in $X$.
\end{definition}

\begin{lemma}\label{lem:via_homeo}
	Consider a definably complete structure.
	The following assertions hold true:
	\begin{enumerate}
		\item[(i)] A definable homeomorphism is definably proper.
		\item[(ii)] The composition of definably proper maps is definably proper. 
	\end{enumerate}
\end{lemma}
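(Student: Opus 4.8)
The plan is to prove the two assertions directly from the definition of definable properness, which only involves the behaviour of inverse images of closed bounded definable sets.

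For part (i), suppose $f:X \to Y$ is a definable homeomorphism between definable sets $X \subseteq F^m$ and $Y \subseteq F^n$. Let $K$ be a closed bounded definable set in $F^n$ with $K \subseteq Y$. First I would observe that $f^{-1}(K) = g(K \cap Y)$, where $g = f^{-1}:Y \to X$ is the inverse homeomorphism; since $K \subseteq Y$ this is just $g(K)$. The subtlety is that $K$ is closed and bounded as a subset of $F^n$, and I must show $f^{-1}(K)$ is closed and bounded as a subset of $F^m$. Boundedness is the part that needs the most care: a homeomorphism need not preserve boundedness in general, so the argument cannot be purely topological. Here is where I expect the main obstacle to lie, and I would address it using the completeness hypotheses. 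One clean route is to reduce to Lemma \ref{lem:image}: I cannot apply it to $K$ directly (its image under $g$ might not be what we want if $K$ fails to be a subset living inside a nice closed bounded set), so instead I would argue that $f^{-1}(K)$ is closed in $X$ by continuity of $f$, closed in $F^m$ because $K$ is closed in $Y$ and $X$ relates appropriately, and bounded by a curve-selection or local-boundedness argument. Concretely, if $f^{-1}(K)$ were unbounded, I would extract a definable curve escaping to infinity and transport it through $f$ to contradict boundedness of $K$, using Proposition \ref{prop:mono2} and Proposition \ref{prop:curve_selection}.

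For part (ii), let $f:X \to Y$ and $h:Y \to Z$ be definably proper, and let $K$ be a closed bounded definable set in the ambient space of $Z$ with $K \subseteq Z$. The composition $h \circ f$ is definable and continuous as a composite of such maps. I would compute $(h \circ f)^{-1}(K) = f^{-1}(h^{-1}(K))$. Since $h$ is definably proper, $h^{-1}(K)$ is closed and bounded in the ambient space of $Y$, and it is contained in $Y$. To invoke the defining property of $f$ I need $h^{-1}(K)$ to be a closed bounded definable subset of the ambient space that lies inside $Y$, which is exactly what definable properness of $h$ delivers. Applying definable properness of $f$ to the closed bounded set $h^{-1}(K) \subseteq Y$ then yields that $f^{-1}(h^{-1}(K))$ is closed and bounded, completing the proof.

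The essential point throughout is that the only nontrivial content is the preservation of boundedness, since closedness follows from continuity together with the closedness of $K$; for part (ii) there is no genuine obstacle beyond unwinding definitions, whereas for part (i) the boundedness of $f^{-1}(K)$ is the step I would dwell on, handling it through definable curve selection and the completability results established earlier.
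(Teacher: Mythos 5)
Your part (ii) is correct and is exactly the paper's argument: unwind $(h\circ f)^{-1}(K)=f^{-1}(h^{-1}(K))$ and apply the definition twice, using that definable properness of $h$ hands you $h^{-1}(K)$ as a closed bounded definable subset contained in $Y$.

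Part (i) contains a genuine gap, and it stems from your dismissal of the direct route. You say you ``cannot apply Lemma \ref{lem:image} to $K$ directly,'' but you can, and this is the paper's entire proof: since $f$ is a definable homeomorphism, $g=f^{-1}:Y\to X$ is a definable continuous map, $K\subseteq Y$ is a closed bounded definable set, and $f^{-1}(K)=g(K)$ is therefore the image of a closed bounded definable set under a definable continuous map, hence closed and bounded in $F^m$ by Lemma \ref{lem:image}. There is nothing more to check; your worry that ``a homeomorphism need not preserve boundedness'' is precisely what Lemma \ref{lem:image} (definable completeness) is there to dispose of. The replacement argument you propose instead --- extracting a definable curve escaping to infinity via Proposition \ref{prop:curve_selection} and Proposition \ref{prop:mono2} --- does not work at the stated level of generality: Lemma \ref{lem:via_homeo} assumes only a definably complete structure, whereas curve selection and completability of curves in closed bounded sets both require a definably complete \emph{locally o-minimal expansion of an ordered group}. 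So your route both bypasses the one-line proof for a mistaken reason and invokes machinery whose hypotheses are not available here.
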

\begin{proof}
	The assertion (i) immediately follows from Lemma \ref{lem:image}.
	The assertion (ii) is obvious from the definition of definably proper maps.
\end{proof}

\begin{lemma}\label{lem:proper_closed}
	Consider a definably complete locally o-minimal expansion of an ordered group.
	A definably proper map is a closed map.
\end{lemma}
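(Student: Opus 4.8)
The plan is to show that $f$ sends every definable set closed in $X$ to a set closed in $Y$, arguing by contradiction with the help of curve selection and the completability of curves in closed bounded sets. Let $A$ be a definable set closed in $X$ and suppose, toward a contradiction, that $f(A)$ is not closed in $Y$. Since $\mycl_Y(f(A))=\mycl(f(A)) \cap Y$, there is a point $y \in \mycl(f(A)) \cap Y$ with $y \notin f(A)$; in particular $y \in \mycl(f(A)) \setminus f(A)$, so $f(A)$ fails to be closed in $F^n$ and Proposition \ref{prop:curve_selection} furnishes a small $\varepsilon>0$ and a definable continuous curve $\gamma:(0,\varepsilon) \to f(A)$ with $\lim_{t \to +0}\gamma(t)=y$.

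Next I would lift $\gamma$ through $f$. As $\gamma(t) \in f(A)$ for every $t$, the definable choice lemma (Proposition \ref{prop:definable_choice}), applied to the definable set $\{(t,x) \in (0,\varepsilon) \times A \;|\; f(x)=\gamma(t)\}$ over the projection to the first coordinate, produces a definable map $\sigma:(0,\varepsilon) \to A$ with $f \circ \sigma=\gamma$. Applying Proposition \ref{prop:mono} to each coordinate function of $\sigma$ and shrinking $\varepsilon$, I may assume $\sigma$ is continuous, so that $\sigma$ is a definable curve into $A$.

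The heart of the argument is to use definable properness to manufacture a limit point of $\sigma$ lying in $A$. After shrinking $\varepsilon$ so that the coordinates of $\gamma$ are monotone and fixing some $0<\delta<\varepsilon$, the extension $\bar\gamma:[0,\delta] \to F^n$ defined by $\bar\gamma(0):=y$ and $\bar\gamma(t):=\gamma(t)$ for $t>0$ is definable and continuous on the closed bounded interval $[0,\delta]$. Hence its image $K:=\bar\gamma([0,\delta])=\{y\} \cup \gamma((0,\delta])$ is closed and bounded by Lemma \ref{lem:image}, and $K \subseteq \{y\} \cup f(A) \subseteq Y$. Definable properness of $f$ then gives that $f^{-1}(K)$ is closed and bounded in $F^m$. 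Since $f(\sigma(t))=\gamma(t) \in K$, we have $\sigma((0,\delta]) \subseteq f^{-1}(K)$, so Proposition \ref{prop:mono2} shows $\sigma$ is completable in $f^{-1}(K)$: the point $x:=\lim_{t \to +0}\sigma(t)$ exists and lies in $f^{-1}(K) \subseteq X$. Because $A$ is closed in $X$, $\sigma(t) \in A$, and $x \in X$, we conclude $x \in A$. Finally, continuity of $f$ together with Lemma \ref{lem:cont} yields $f \circ \sigma \to f(x)$, while $f \circ \sigma=\gamma \to y$; uniqueness of limits forces $y=f(x) \in f(A)$, contradicting $y \notin f(A)$. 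Therefore $f(A)$ is closed in $Y$.

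I expect the main obstacle to be the construction of the closed bounded definable set $K \subseteq Y$ that is fed into the properness hypothesis: one must ensure that the relevant closure of the trace of $\gamma$ does not escape $Y$, which is precisely why I pass to the continuous extension $\bar\gamma$ through the limit $y$ and invoke Lemma \ref{lem:image}, rather than taking a naive ambient closure. Once this $K$ is in hand, the remaining steps—lifting by definable choice, rectifying $\sigma$ to be continuous via Proposition \ref{prop:mono}, and completing it via Proposition \ref{prop:mono2}—are routine.
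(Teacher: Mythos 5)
Your argument is correct and follows essentially the same route as the paper's proof: curve selection to approach a closure point $y$ of $f(A)$ in $Y$, the extension of $\gamma$ through $y$ to produce a closed bounded $K\subseteq Y$, definable properness to make $f^{-1}(K)$ closed and bounded, a definable-choice lift made continuous via Proposition \ref{prop:mono}, and completability via Proposition \ref{prop:mono2} to land the limit in $A$. The only difference is cosmetic (you lift the curve before forming $K$, the paper forms $K$ first and chooses the lift inside it), so no further comment is needed.
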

\begin{proof}
	Let $\mathcal F=(F,<,+,0,\ldots)$ be a definably complete expansion of an ordered group.
		Let $X \subseteq F^m$ and $Y \subseteq F^n$ be definable sets and $f:X \to Y$ be a definably proper map.
		Let $C$ be a closed definable subset of $X$ and $y$ be a point in the closure of $f(C)$ in $Y$.
		We want to show that $y \in f(C)$.
		
		By Proposition \ref{prop:curve_selection}, there exists a definable curve $\gamma:(0,\varepsilon) \to f(C)$ with $\lim_{t \to 0} \gamma(t)=y$.
		The definable continuous map $\beta:[0,\varepsilon/2] \to \mycl(f(X))$ is defined by  $\beta(0)=y$ and $\beta(t)=\gamma(t)$ for $0<t \leq \varepsilon/2$.
		The image $\beta([0,\varepsilon/2])$ is closed and bounded in $F^n$ by Lemma \ref{lem:image}.
		Since $f$ is definably proper, its inverse image $K=f^{-1}(\beta([0,\varepsilon/2]))$ through $f$ is also closed and bounded in $F^m$.
		
		Consider the definable set $Z=\{(t,x) \in (0,\varepsilon/2] \times K\;|\; f(x)=\beta(t) \text{ and } x \in C \}$.
		By the definition of $K$, the fiber $Z_t:=\{x \in C\;|\;(t,x) \in Z\}$ is not empty for any $0<t \leq \varepsilon/2$.
		Apply Proposition \ref{prop:definable_choice} to $Z$.
		We can choose a definable map $\alpha:(0,\varepsilon/2] \to C$ such that $f \circ \alpha(t)=\beta(t)=\gamma(t)$ for all $0<t \leq \varepsilon/2$.
		We may assume that both $\alpha$ and $f \circ \alpha$ are continuous by Proposition \ref{prop:mono}.
		Since the image $\alpha((0,\varepsilon/2])$ is contained in $K$, there exists the limit $x'=\lim_{t \to 0}\alpha(t) \in K$ by Proposition \ref{prop:mono2}.
		We have $x' \in C$ because $C$ is closed and the image of $\alpha((0,\varepsilon/2])$  is contained in $C$.
		It is obvious that $y=f(x')$.
		It means that $f(C)$ is closed.
\end{proof}

\begin{lemma}\label{lem:proper_eq}
	Let $\mathcal F=(F,<,+,0,\ldots)$ be a definably complete locally o-minimal expansion of an ordered field.
	Let $X \subseteq F^m$ and $Y \subseteq F^n$ be definable sets and $f:X \to Y$ be a definable continuous map.
	The following are equivalent:
	\begin{enumerate}
		\item[(i)]  The map $f$ is definably proper;
		\item[(ii)] Any definable curve $\gamma:(0,\varepsilon) \to X$ is completable in $X$ whenever the definable curve $f \circ \gamma: (0,\varepsilon) \to Y$ is completable in $Y$.
	\end{enumerate}
\end{lemma}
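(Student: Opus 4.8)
The plan is to prove the two implications separately, in each case transferring between properness and completability via the curve-selection and curve-completion machinery already available (Propositions \ref{prop:curve_selection} and \ref{prop:mono2}), together with continuity via Lemma \ref{lem:cont}.

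For (i) $\Rightarrow$ (ii) I would argue exactly as in the proof of Lemma \ref{lem:proper_closed}. Let $\gamma:(0,\varepsilon)\to X$ be a definable curve whose image $f\circ\gamma$ is completable in $Y$, say $f\circ\gamma\to y\in Y$. Since $f$ and $\gamma$ are continuous, $f\circ\gamma$ is continuous on $(0,\varepsilon)$, so the map $\beta:[0,\varepsilon/2]\to F^n$ defined by $\beta(0)=y$ and $\beta(t)=f(\gamma(t))$ for $t>0$ is a definable continuous map with image contained in $Y$. By Lemma \ref{lem:image} its image $K:=\beta([0,\varepsilon/2])$ is a closed bounded definable subset of $Y$, so definable properness of $f$ makes $f^{-1}(K)$ closed and bounded. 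Because $\gamma((0,\varepsilon/2])\subseteq f^{-1}(K)$, Proposition \ref{prop:mono2} applies to the restriction of $\gamma$ and yields $\lim_{t\to 0}\gamma(t)\in f^{-1}(K)\subseteq X$; hence $\gamma$ is completable in $X$.

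For (ii) $\Rightarrow$ (i), fix a closed bounded definable set $K\subseteq Y$ and show $f^{-1}(K)$ is both closed and bounded. For closedness, suppose not and pick $a\in\mycl(f^{-1}(K))\setminus f^{-1}(K)$; Proposition \ref{prop:curve_selection} gives a definable curve $\gamma:(0,\varepsilon)\to f^{-1}(K)$ with $\gamma\to a$. Then $f\circ\gamma$ is a definable curve into the closed bounded set $K$, hence completable in $K\subseteq Y$ by Proposition \ref{prop:mono2}; by (ii), $\gamma$ is completable in $X$, i.e. $a=\lim_{t\to0}\gamma(t)\in X$. Continuity of $f$ (Lemma \ref{lem:cont}) then forces $f(a)=\lim_{t\to0}f(\gamma(t))\in K$, so $a\in f^{-1}(K)$, a contradiction.

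For boundedness, the real point --- and the only place the field structure is needed --- is to manufacture a curve escaping to infinity while its image remains trapped in the compact set $K$. Assuming $f^{-1}(K)$ is unbounded, the definable set $\{(t,z)\in(0,1)\times f^{-1}(K)\mid |z|>1/t\}$ projects onto $(0,1)$, so definable choice (Proposition \ref{prop:definable_choice}) provides a definable map $\rho:(0,1)\to f^{-1}(K)$ with $|\rho(t)|>1/t$, which after shrinking the domain may be taken continuous by Proposition \ref{prop:mono}. Then $|\rho(t)|\to\infty$, so $\rho$ is \emph{not} completable in $X$, whereas $f\circ\rho$ is a definable curve into the closed bounded set $K$ and is therefore completable in $K\subseteq Y$ by Proposition \ref{prop:mono2}. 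This contradicts (ii), so $f^{-1}(K)$ is bounded. I expect this inversion trick to be the main obstacle: one must use that $\mathcal F$ is an ordered field (reciprocals exist) to reparametrise ``large norm'' by a small positive parameter, mirroring the device used in the proof of Lemma \ref{lem:local_bound}; without it the escaping curve cannot be produced, which is exactly why the hypothesis is strengthened from ordered group to ordered field here.
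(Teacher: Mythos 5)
Your proposal is correct and follows essentially the same route as the paper: the forward direction via Lemma \ref{lem:image}, definable properness of $f^{-1}(K)$ and Proposition \ref{prop:mono2}, and the converse via Proposition \ref{prop:curve_selection} for closedness and a reciprocal-reparametrised escaping curve for boundedness. The only cosmetic difference is that in the unbounded case the paper first selects a coordinate projection with unbounded image and sections over a ray $(c,\infty)$ before composing with $t\mapsto 1/t$, whereas you apply definable choice directly to $\{(t,z)\mid |z|>1/t\}$; both hinge on the same field-structure trick.
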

\begin{proof}
	Assume that $f$ is definably proper.
	Let $\gamma:(0,\varepsilon) \to X$ be a definable curve such that $f \circ \gamma$ is completable in $Y$.
	Set $y=\lim_{t \to 0}f(\gamma(t)) \in Y$.
	By the definition of $y$, the definable map $g:[0,\varepsilon) \to Y$ defined by $g(t)=f(\gamma(t))$ for $0<t<\varepsilon$ and $g(0)=y$ is a definable continuous map.
	The definable set $K=g([0,\varepsilon/2])$ is closed and bounded in $F^n$ by Lemma \ref{lem:image}.
	Therefore, $f^{-1}(K)$ is closed and bounded.
	The curve $\gamma|_{(0,\varepsilon/2)}$ is completable in $f^{-1}(K)$ by Proposition \ref{prop:mono2}. 
	It implies that $\gamma$ is completable in $X$.
	
	We next assume that $f$ is not definably proper.
	There is a definable subset $K$ of $Y$ which is closed and bounded in $F^n$, but $f^{-1}(K)$ is not closed in $F^m$ or not bounded.
	Let $\pi_i:F^m \to F$ be the coordinate projection onto the $i$-th coordinate for $1 \leq i \leq m$.
	If $f^{-1}(K)$ is not bounded, the image $\pi_i(f^{-1}(K))$ is unbounded for some $1 \leq i \leq m$.
	We may assume that $\sup\pi_1(f^{-1}(K))=\infty$ without loss of generality.
	By local o-minimality, there exists $0<c \in F$ such that $(c,\infty) \subseteq \pi_1(f^{-1}(K))$.
	There exists a definable map $\tau:(c,\infty) \to f^{-1}(K)$ such that the composition $\pi_1 \circ \tau$ is the identity map on $(c,\infty)$ by Proposition \ref{prop:definable_choice}.
	Let $\rho:(0,\infty) \to (0,\infty)$ be the map defined by $\rho(t)=1/t$.
	We may assume that the restriction $\gamma':(0,\varepsilon) \to X$ of $\tau \circ \rho$ to $(0,\varepsilon)$ is continuous for some $0<\varepsilon <1/c$ by Proposition \ref{prop:mono}.
	The curve $\gamma'$ is not completable in $X$, but $f \circ \gamma'$ is completable in $Y$ by Proposition \ref{prop:mono2} because the image of $f \circ \gamma'$ is contained in $K$ and $K$ is closed and bounded.
	
	If $f^{-1}(K)$ is not closed, take a point $x \in \partial (f^{-1}(K))$.
	Note that $x \notin X$ because $f^{-1}(K)$ is closed in $X$. 
	We can take a definable curve $\gamma'':(0,\varepsilon) \to f^{-1}(K)$ with $x=\lim_{t \to 0}\gamma''(t)$ by Proposition \ref{prop:curve_selection}.
	The definable curve $\gamma''$ is not completable in $X$ and $f \circ \gamma''$ is completable in $Y$ for the same reason as above.
\end{proof}

\begin{lemma}\label{lem:identifying_eq}
	Let $\mathcal F=(F,<,+,0,\ldots)$ be a definably complete locally o-minimal expansion of an ordered group.
	Let $X \subseteq F^m$ and $Y \subseteq F^n$ be definable sets and $f:X \to Y$ be a definable continuous map.
	The following are equivalent:
	\begin{enumerate}
		\item[(i)]  The map $f$ is definably identifying;
		\item[(ii)] For any definable curve $\beta:(0,\varepsilon) \to Y$ completable in $Y$, there exists a definable  curve $\alpha:(0,\varepsilon') \to X$ completable in $X$ such that $0<\varepsilon '<\varepsilon$ and $f \circ \alpha = \beta|_{(0,\varepsilon')}$.
	\end{enumerate}
\end{lemma}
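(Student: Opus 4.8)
The plan is to prove the two implications separately, treating (ii)$\Rightarrow$(i) first as the easier direction.

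For (ii)$\Rightarrow$(i), I would first verify surjectivity: given $y \in Y$ the constant curve $\beta \equiv y$ is completable in $Y$, so (ii) furnishes a curve $\alpha$ with $f(\alpha(t)) = y$, whence $y \in f(X)$. For the identifying condition, let $K \subseteq Y$ be definable with $f^{-1}(K)$ closed in $X$, and suppose toward a contradiction that $K$ is not closed in $Y$. Choosing $y \in \mycl_Y(K) \setminus K$, Proposition \ref{prop:curve_selection} yields a definable curve $\beta:(0,\varepsilon) \to K$ with $\beta \to y$, which is completable in $Y$. Apply (ii) to obtain a completable $\alpha:(0,\varepsilon') \to X$ with $f \circ \alpha = \beta|_{(0,\varepsilon')}$ and limit $x = \lim_{t\to0}\alpha(t) \in X$. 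Then $\alpha(t) \in f^{-1}(K)$, and since $f^{-1}(K)$ is closed in $X$ the limit $x$ lies in $f^{-1}(K)$; continuity of $f$ (Lemma \ref{lem:cont}) gives $f(x) = \lim_{t\to 0} \beta(t) = y$, so $y = f(x) \in K$, contradicting the choice of $y$.

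For (i)$\Rightarrow$(ii), let $\beta:(0,\varepsilon) \to Y$ be completable with limit $y \in Y$. If $\beta$ is eventually constant then $\beta \equiv y$ near $0$ and any point of the nonempty fibre $f^{-1}(y)$ gives a constant completable lift, so I may assume $\beta$ is non-constant; applying Proposition \ref{prop:mono} coordinatewise and shrinking $\varepsilon$, some coordinate of $\beta$ becomes strictly monotone, making $\beta$ injective and ensuring $y \notin \beta((0,\varepsilon'])$ for small $\varepsilon'$. The key device is to set $K := \beta((0,\varepsilon'])$, the image of the half-open interval \emph{including} the far endpoint. Then (using Lemma \ref{lem:image} to compute the closure of the curve segment) one has $\mycl(K) \setminus K = \{y\}$, so $K$ is not closed in $Y$. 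Since $f$ is definably identifying, $f^{-1}(K)$ is not closed in $X$; pick $x^* \in X \cap \mycl(f^{-1}(K))$ with $x^* \notin f^{-1}(K)$, and by Proposition \ref{prop:curve_selection} a definable curve $\delta:(0,\eta) \to f^{-1}(K)$ with $\delta \to x^*$. Continuity forces $f(\delta(t)) \to f(x^*) \in \mycl(K)$, while $x^* \notin f^{-1}(K)$ gives $f(x^*) \notin K$; the careful choice of $K$ then pins down $f(x^*) = y$.

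It remains to reparametrize $\delta$ into an honest lift of $\beta$. Using injectivity of $\beta$ I define the definable function $\psi := \beta^{-1} \circ f \circ \delta : (0,\eta) \to (0,\varepsilon']$ and shrink its domain by Proposition \ref{prop:mono} so that $\psi$ is continuous and monotone. Since $f(\delta(t)) \to y$ while $\beta(s) \to y$ only as $s \to 0$, we get $\psi(t) \to 0$; as $\psi$ is non-constant (constancy would force $f(\delta(t)) \equiv \beta(c) \neq y$, contradicting the limit), it is strictly monotone, and tending to $0$ it must be strictly increasing, hence invertible on a smaller interval $(0,\varepsilon')$. Setting $\alpha := \delta \circ \psi^{-1}$ gives $f \circ \alpha = \beta|_{(0,\varepsilon')}$, and $\psi^{-1}(s) \to 0$ yields $\alpha(s) \to x^* \in X$, so $\alpha$ is completable in $X$, as required. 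I expect the main obstacle to be this implication (i)$\Rightarrow$(ii), and within it the delicate choice of the test set $K = \beta((0,\varepsilon'])$: using the segment that includes the far endpoint is exactly what forces the escaping point $x^*$ to satisfy $f(x^*) = y$ rather than to sit over the other endpoint, after which the monotone reparametrization is routine.
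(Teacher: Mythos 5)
Your proof is correct and follows essentially the same route as the paper: both directions hinge on the test set $K=\beta((0,\varepsilon'])$, whose closure adds only the limit point $y$, and on reparametrizing a definable curve approaching a frontier point of $f^{-1}(K)$. The only differences are cosmetic --- you invert $\beta$ directly (legitimate once some coordinate is strictly monotone) where the paper uses definable choice to produce the reparametrizing map $\iota$, and you prove (ii)$\Rightarrow$(i) directly rather than by contraposition, which has the small advantage of recording the surjectivity of $f$ explicitly via constant curves.
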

\begin{proof}
	We first consider the case in which $f$ is definably identifying.
	Let $\beta:(0,\varepsilon) \to Y$ be a definable curve completable in $Y$.
	Set $y=\lim_{t \to 0}\beta(t)$.
	We first consider the case in which the restriction of $\beta$ to $(0,\varepsilon ')$ is constant for some $\varepsilon '>0$.
	Take a point $x \in f^{-1}(y)$, then the definable curve $\alpha:(0,\varepsilon') \to X$ defined by $\alpha(t)=x$ satisfies the condition (ii). 
	Therefore, we may assume that the restriction of $\beta$ to $(0,\delta)$ is not constant for any $\delta>0$.
	By taking smaller $\varepsilon>0$ if necessary, we may assume that $y \notin \beta((0,\varepsilon))$.
	In particular, the set $\beta((0,\varepsilon/2])$ is not closed.
	The inverse image $f^{-1}(\beta((0,\varepsilon/2]))$ is not closed because $f$ is definably identifying.
	Take a point $x$ in the frontier of $f^{-1}(\beta((0,\varepsilon/2]))$ in $X$.
	Set $B(x;t)=\{u \in F^m\;|\; |x-u|<t\}$ for $t>0$.
	We have $B(x;t) \cap f^{-1}(\beta((0,\varepsilon/2])) \neq \emptyset$ for any $t>0$.
	Therefore, by Proposition \ref{prop:definable_choice}, we can find a definable map $\eta:(0,\infty) \to f^{-1}(\beta((0,\varepsilon/2]))$ so that $\eta(t) \in  B(x;t)$.
	Using Proposition \ref{prop:definable_choice} once again, we get a definable map $\iota:(0,\infty) \to (0,\varepsilon/2]$ such that $\eta(t) \in f^{-1}(\beta(\iota(t)))$.
	Since $\eta(t) \in B(x;t)$, we have $\lim_{t \to 0}\iota(t)=0$.
	By Proposition \ref{prop:mono}, by taking a small $\delta>0$, the restriction of $\iota|_{(0,\delta)}$ is continuous and strictly increasing and $\eta$ is continuous on the open interval $(0,\iota^{-1}(\delta))$. 
	Set $\alpha=\eta \circ \iota^{-1}: (0,\delta) \to X$.
	We have $f \circ \alpha = \beta|_{(0,\delta)}$ and $\lim_{t \to 0}\alpha(t)=x$.
	
	We next consider the case in which $f$ is not definably identifying.
	There exists a definable subset $K$ of $Y$ such that $K$ is not closed in $Y$ and $f^{-1}(K)$ is closed in $X$.
	By Proposition \ref{prop:curve_selection}, there exists a definable curve $\beta:(0,\varepsilon) \to K$ such that $\lim_{t \to 0}\beta(t)=y \in Y \setminus K$.
	We assume that the condition (ii) is satisfied.
	By the assumption, taking a smaller $\varepsilon>0$ if necessary, there exists a definable curve $\alpha:(0,\varepsilon) \to X$ completable in $X$ such that $f \circ \alpha = \beta$.
	Set $x=\lim_{t \to 0}\alpha(t)$.
	Since $\alpha((0,\varepsilon)) \subseteq f^{-1}(K)$ and $f^{-1}(K)$ is closed, we have $x \in f^{-1}(K)$.
	It implies that $y=f(x) \in K$.
	It is a contradiction.
\end{proof}

\begin{lemma}\label{lem:identifying_basic}
Let $\mathcal F=(F,<,+,0,\ldots)$ be a definably complete expansion of an ordered group.
The following assertions hold true:
\begin{enumerate}
	\item[(1)] The composition of two definably identifying maps is definably identifying.
	\item[(2)] Let $f:X \to Y$ and $g:Y \to Z$ be definable continuous maps.
	If the composition $g \circ f$ is definably identifying, so is the map $g$.
\end{enumerate}
\end{lemma}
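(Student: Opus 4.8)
The plan is to reduce both assertions to the elementary set-theoretic identity $(g \circ f)^{-1}(K) = f^{-1}(g^{-1}(K))$, combined with the standard facts that a composition of surjective (resp. continuous) maps is surjective (resp. continuous) and that the preimage of a closed set under a continuous map is closed. No curve selection or local o-minimality is needed; both statements are purely formal consequences of the definition of a definably identifying map.

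For assertion (1), write $f:X \to Y$ and $g:Y \to Z$ for the two definably identifying maps. First I would note that $g \circ f$ is definable, continuous, and surjective, since each of $f$ and $g$ has these properties. To check the defining condition, take a definable subset $K$ of $Z$ such that $(g \circ f)^{-1}(K)$ is closed in $X$. Rewriting $(g \circ f)^{-1}(K) = f^{-1}(g^{-1}(K))$ and applying that $f$ is definably identifying, I conclude that $g^{-1}(K)$ is closed in $Y$. Applying the same reasoning to $g$ then gives that $K$ is closed in $Z$, which is exactly what is required.

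For assertion (2), I first establish surjectivity of $g$: since $g \circ f$ is surjective we have $Z = (g \circ f)(X) = g(f(X)) \subseteq g(Y) \subseteq Z$, whence $g(Y) = Z$. Next, given a definable subset $K$ of $Z$ with $g^{-1}(K)$ closed in $Y$, I would use the continuity of $f$ to deduce that $f^{-1}(g^{-1}(K)) = (g \circ f)^{-1}(K)$ is closed in $X$. Since $g \circ f$ is definably identifying, this forces $K$ to be closed in $Z$, so $g$ is definably identifying.

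These arguments are essentially diagram chases, and I do not anticipate any genuine obstacle. The one point deserving care is in assertion (2): we are given no identifying hypothesis on $f$, so the transfer of closedness from $Y$ back to $X$ must rely only on the continuity of $f$ (preimage of a closed set is closed), while the identifying property is invoked solely for the composite $g \circ f$.
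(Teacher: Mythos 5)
Your proof of assertion (1) is correct and coincides with the paper's own argument: a direct chase through the definition using $(g\circ f)^{-1}(C)=f^{-1}(g^{-1}(C))$ and the identifying property of $f$ and then of $g$. For assertion (2), however, you take a genuinely different and in fact more elementary route. The paper proves (2) via the curve-theoretic characterization of definably identifying maps (Lemma \ref{lem:identifying_eq}): given a curve $\beta$ completable in $Z$, it lifts $\beta$ through $g\circ f$ to a curve $\gamma$ completable in $X$ and takes $\alpha=f\circ\gamma$ as the required lift through $g$. That characterization requires local o-minimality and an ordered group, so strictly speaking the paper's proof of (2) invokes hypotheses beyond those stated in the lemma itself. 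Your argument --- $g^{-1}(K)$ closed in $Y$ implies $(g\circ f)^{-1}(K)=f^{-1}(g^{-1}(K))$ closed in $X$ by continuity of $f$, hence $K$ closed in $Z$ since $g\circ f$ is definably identifying --- uses only the continuity of $f$ and the definition, so it is valid in an arbitrary structure and matches the lemma's stated hypotheses exactly. Both proofs are correct; yours is cleaner and more general for part (2), while the paper's illustrates the curve criterion it relies on elsewhere.
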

\begin{proof}
	We first prove the assertion (1).
	Let $f:X \to Y$ and $g:Y \to Z$ be definably identifying maps.
	The composition $g \circ f$ is obviously surjective.
	Let $C$ be a definable subset of $Z$ such that $(g \circ f)^{-1}(C)$ is closed.
	Since $f$ is definably identifying, $g^{-1}(C)$ is closed.
	Then, $C$ is closed because $g$ is definably identifying.
	
	We next prove the assertion (2).
	It is obvious that $g$ is surjective.
	We use Lemma \ref{lem:identifying_eq}.
	Let $\beta:(0,\varepsilon) \to Z$ be a definable curve completable in $Z$.
	Since $g \circ f$ is definably identifying, there exists a definable curve $\gamma$ completable in $X$ such that $g \circ f \circ \gamma = \beta$.
	The definable curve $\alpha = f \circ \gamma$  is a definable curve completable in $Y$ such that $g \circ \alpha =\beta$.
\end{proof}

\begin{corollary}\label{cor:proper_identifying}
	Consider a definably complete locally o-minimal expansion of an ordered field.
	A definably proper surjective map is definably identifying.
\end{corollary}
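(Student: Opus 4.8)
The plan is to deduce the statement directly from the closed-map property of definably proper maps established in Lemma \ref{lem:proper_closed}, rather than re-running any curve arguments. Write $f:X \to Y$ for the given definably proper surjective map, where $X \subseteq F^m$ and $Y \subseteq F^n$ are definable. Surjectivity is part of the hypothesis, so the only thing left to verify is the defining implication of a definably identifying map: for every definable subset $K \subseteq Y$, if $f^{-1}(K)$ is closed in $X$, then $K$ is closed in $Y$.

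First I would fix such a $K$ and assume that $f^{-1}(K)$ is closed in $X$. Since $f$ is definably proper, Lemma \ref{lem:proper_closed} tells us that $f$ is a closed map, meaning that it sends every definable set closed in $X$ to a definable set closed in $Y$. Applying this to the closed definable set $f^{-1}(K)$ shows that $f(f^{-1}(K))$ is closed in $Y$.

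The second step is the elementary identity $f(f^{-1}(K)) = K$, which holds precisely because $f$ is surjective: in general one has $f(f^{-1}(K)) = K \cap f(X)$, and here $f(X)=Y \supseteq K$, so the intersection is all of $K$. Combining the two steps, $K$ itself is closed in $Y$, which is exactly what is required; hence $f$ is definably identifying.

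There is essentially no obstacle here, since the genuine work has already been done in Lemma \ref{lem:proper_closed} (whose proof uses the curve-selection and completion machinery). The one point I would pause on is to confirm that the notion of ``closed map'' supplied by that lemma is the correct one, namely that images of sets closed in $X$ are closed in $Y$ rather than merely closed in $F^n$; this is indeed what its proof establishes, and it dovetails exactly with the definition of definably identifying, which speaks of closedness relative to $Y$ and $X$. An alternative route would combine the curve characterizations of Lemma \ref{lem:proper_eq} and Lemma \ref{lem:identifying_eq}: given a definable curve $\beta$ completable in $Y$, lift it pointwise through the surjection $f$ by the definable choice lemma (Proposition \ref{prop:definable_choice}), make the lift continuous on a smaller interval via Proposition \ref{prop:mono}, and then invoke properness to complete it in $X$; this yields condition (ii) of Lemma \ref{lem:identifying_eq}, but it is strictly longer than the closed-map argument, so I would adopt the latter.
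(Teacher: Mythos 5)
Your argument is correct, but it takes a genuinely different route from the paper's. The authors deduce the corollary from the curve characterizations: given a definable curve $\beta$ completable in $Y$, they lift it through the surjection $f$ by Proposition \ref{prop:definable_choice}, make the lift continuous on a smaller interval by Proposition \ref{prop:mono}, complete the lift in $X$ via condition (ii) of Lemma \ref{lem:proper_eq}, and conclude by condition (ii) of Lemma \ref{lem:identifying_eq} --- this is precisely the ``alternative route'' you sketch and then set aside. Your chosen argument instead runs through Lemma \ref{lem:proper_closed}: since $f$ is a closed map and $f(f^{-1}(K))=K$ by surjectivity, closedness of $f^{-1}(K)$ in $X$ immediately forces closedness of $K$ in $Y$. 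This is shorter and matches the definition of ``definably identifying'' verbatim, and the one delicate point --- that Lemma \ref{lem:proper_closed} yields closedness of $f(C)$ in $Y$ for $C$ definable and closed in the relative topology of $X$, which is what its proof actually establishes (the limit point $x'$ produced there lies in $K\subseteq X$, so relative closedness of $C$ suffices) --- is exactly the point you flag and resolve correctly. The trade-off is only one of dependencies: your route leans on Lemma \ref{lem:proper_closed}, whose proof already contains the curve-selection and completion machinery, whereas the paper's route re-runs that machinery explicitly through the two equivalence lemmas; the mathematical content is the same, and either proof is acceptable.
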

\begin{proof}
	It follows from Proposition \ref{prop:definable_choice}, Proposition \ref{prop:mono}, Lemma \ref{lem:proper_eq} and Lemma \ref{lem:identifying_eq}.
	We omit the details.
\end{proof}

In the last of this section, we give another equivalent condition for a definable map to be definably proper.
We do not use the following proposition in this paper, but it is worth to be mentioned. 
\begin{proposition}\label{prop:properness}
	Consider a definably complete locally o-minimal expansion of an ordered field.
	A definable continuous map $f:X \to Y$ is definably proper if and only if it is a closed map and $f^{-1}(y)$ is closed and bounded for any $y \in Y$.
\end{proposition}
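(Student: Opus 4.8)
The plan is to prove the two implications separately, using the curve characterization of definable properness from Lemma~\ref{lem:proper_eq} as the main engine for the harder direction. For the forward implication, suppose $f$ is definably proper. Then Lemma~\ref{lem:proper_closed} immediately gives that $f$ is a closed map. For the fibers, observe that for every $y \in Y$ the singleton $\{y\}$ is a closed and bounded definable subset of $F^n$ contained in $Y$, so the definition of definable properness forces $f^{-1}(y)=f^{-1}(\{y\})$ to be closed and bounded in $F^m$. This direction is essentially immediate.

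For the reverse implication I would argue by contraposition: assuming $f$ is not definably proper, I produce a witness to the failure of one of the two hypotheses. By Lemma~\ref{lem:proper_eq} there is a definable curve $\gamma:(0,\varepsilon)\to X$ such that $f\circ\gamma$ is completable in $Y$, say $f\circ\gamma \to y \in Y$, while $\gamma$ itself is not completable in $X$. Using Proposition~\ref{prop:mono} I shrink $\varepsilon$ so that every coordinate of $\gamma$ and of $f\circ\gamma$ is continuous and monotone. This forces a dichotomy: either $f\circ\gamma$ is constant (necessarily equal to $y$) near $0$, or after a further shrinking $y \notin (f\circ\gamma)((0,\varepsilon))$.

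In the constant case $\gamma((0,\varepsilon))\subseteq f^{-1}(y)$. If the fiber $f^{-1}(y)$ were closed and bounded, Proposition~\ref{prop:mono2} would make $\gamma$ completable inside $f^{-1}(y)\subseteq X$, contradicting noncompletability; hence $f^{-1}(y)$ violates the fiber hypothesis. In the nonconstant case I set $C:=\gamma((0,\varepsilon/2])$ and check that $C$ is closed in $X$; then $f(C)$ accumulates at $y\in Y\setminus f(C)$, so $f(C)$ is not closed in $Y$ while $C$ is closed in $X$, i.e. $f$ is not a closed map. Either way one of the two hypotheses fails, which completes the contraposition.

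The main obstacle I anticipate is the verification that $C$ is closed in $X$, since it must be handled uniformly whether or not $\gamma(t)$ has a finite limit as $t\to 0$. Thanks to the monotone-coordinate normalization, the only candidate accumulation point of $C$ lying outside $C$ arises as $t\to 0$: if $\lim_{t\to 0}\gamma(t)=x$ exists in $F^m$, then noncompletability of $\gamma$ forces $x\notin X$, so $\mycl_{F^m}(C)\cap X = C$; if instead some coordinate of $\gamma$ tends to $\pm\infty$, then $C$ has no finite accumulation point as $t\to 0$ and is already closed in $F^m$. In both subcases $\mycl_X(C)=C$, which is exactly what the closed-map argument requires.
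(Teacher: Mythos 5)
Your proof is correct, and for the hard direction it takes a genuinely different route from the paper's. The paper proves the ``if'' part directly: it fixes a closed bounded definable $C \subseteq Y$ and establishes closedness and boundedness of $f^{-1}(C)$ by two separate arguments, the boundedness half going through an auxiliary function $\rho(y)$ measuring the norm of points in the fiber, a local-boundedness argument, and Lemma~\ref{lem:local_bound}. You instead argue by contraposition through the curve criterion of Lemma~\ref{lem:proper_eq}: a single noncompletable curve $\gamma$ with $f\circ\gamma$ completable witnesses the failure, and your case split (is $f\circ\gamma$ eventually constant or not?) mirrors the dichotomy the paper uses inside its closedness argument --- the constant case is killed by the fiber hypothesis via Proposition~\ref{prop:mono2}, the nonconstant case by applying the closed-map hypothesis to $C=\gamma((0,\varepsilon/2])$. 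The payoff of your route is that the ``preimage unbounded'' failure mode is absorbed into ``$\gamma$ escapes to infinity,'' which your closedness verification for $C$ handles in the same breath as the finite-limit case; this lets you dispense with $\rho$ and Lemma~\ref{lem:local_bound} entirely. The one point that deserves the care you gave it is indeed the closedness of $C$ in $X$: your argument (via Lemma~\ref{lem:image} the arcs $\gamma([\delta,\varepsilon/2])$ are closed, so any point of $\mycl(C)\setminus C$ must be the $t\to 0$ limit, which either fails to exist in $F^m$ or lies outside $X$ by noncompletability) is sound. A minor caveat: the dichotomy ``constant near $0$ or $y\notin(f\circ\gamma)((0,\varepsilon))$ after shrinking'' should be justified by applying local o-minimality to the definable set $\{t : \pi_i(f(\gamma(t)))=\pi_i(y)\}$ coordinate by coordinate (or by using the strict form of monotonicity in Proposition~\ref{prop:mono}); the paper glosses over the same point, so this is not a gap specific to your write-up.
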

\begin{proof}
	The `only if' part follows from the definition of definable properness and Lemma \ref{lem:proper_closed}.
	We concentrate on the `if' part.
	
	Let $F^m$ and $F^n$ be the ambient spaces of $X$ and $Y$, respectively.
	Fix a closed, bounded and definable subset $C$ of $F^n$ with $C \subseteq Y$.
	We have only to demonstrate that the inverse image $f^{-1}(C)$ is closed and bounded in $F^m$.
	
	We first demonstrate that $f^{-1}(C)$ is closed in $F^m$.
	Assume for contradiction that $f^{-1}(C)$ is not closed in $F^m$.
	Take a point $p \in \mycl(f^{-1}(C)) \setminus f^{-1}(C)$.
	Note that $f^{-1}(C)$ is closed in $X$ because $f$ is continuous.
	It implies that $p \not\in X$. 
	There exists a definable curve $\gamma:(0,\varepsilon) \to f^{-1}(C)$ such that $\lim_{t \to 0}\gamma(t) = p$ by Proposition \ref{prop:curve_selection}.
	Let $\pi_i:F^n \to F$ be the coordinate projection onto the $i$-th coordinate for $1 \leq i \leq n$.
	We may assume that $\pi_i \circ f \circ \gamma$ is monotone for each $1 \leq i \leq n$ by Proposition \ref{prop:mono} by taking a smaller $\varepsilon>0$ if necessary.
	If $f \circ \gamma$ is a constant map, the image $\gamma((0,\varepsilon))$ is contained in a fiber $f^{-1}(y)$ for some $y \in Y$.
	Since $f^{-1}(y)$ is closed and bounded in $F^m$, we have $p \in X$.
	It is a contradiction.
	Therefore, $\pi_i\circ f \circ \gamma$ is strictly monotone for some $1 \leq i \leq n$.
	The definable curve $f \circ \gamma$ is completable in $C$ by Proposition \ref{prop:mono2}.
	Set $z= \lim_{t \to 0}f(\gamma(t)) \in C \subseteq Y$.
	We have $z \in \mycl_Y(f(\gamma((0,\varepsilon/2]))) \setminus f(\gamma((0,\varepsilon/2]))$.
	On the other hand, the image $\gamma((0,\varepsilon/2])$ is closed in $X$ because $p \not\in X$.
	Its image $f(\gamma((0,\varepsilon/2]))$ is again closed in $Y$ because $f$ is a closed map.
	It is a contradiction.
	
	We next demonstrate that $f^{-1}(C)$ is bounded.
	Consider the definable function $\rho:C \to F$ given by $\rho (y)=\inf \{|x|\;|\; x \in f^{-1}(y)\}$.
	The function $\rho$ is well-defined because $f^{-1}(y)$ is bounded.
	We prove that $\rho$ is locally bounded.
	Assume for contradiction that $\rho$ is not locally bounded at $\hat{y} \in C$.
	By the definition of $\rho$, for any $t>0$, there exist $y \in C$ and $x \in X$ 
	such that
	$|y-\hat{y}|<t$, $|x|>\frac{1}{t}$, $y=f(x)$ and $y \neq \hat{y}$.
	Put $$Z=\{(t,x, y) \in F \times X \times C\;|\;t>0, |x|>\frac{1}{t}, y=f(x), |y - \hat{y}|<t \text{ and }
	y \neq \hat{y}\}.$$
	Let $p:F \times X \times C \to F$ be the projection.
	We get $p(Z)=\{t>0\}$.
	By Proposition \ref{prop:definable_choice},
	there exist definable maps $g:(0, \infty) \to X$ and $h:(0, \infty) \to C$ such that 
	$\Gamma (g \times h)\subseteq Z$, where $\Gamma(g \times h)$ is the graph of the map $g \times h$ defined by $t \mapsto (g(t),h(t))$.
	We obviously have $h=f \circ g$.
	By Proposition \ref{prop:mono}, there exists an $s>0$
	such that $g|_{(0, s)}$ is continuous.
	By the definition of $g$,
	we have $|g(t)|>\frac{1}{t}$.
	It implies that $\lim_{t \to +0}|g(t)|=\infty$.
	In particular, 
	$g((0, \frac{s}{2}]) \subseteq X$ is a closed unbounded definable set.
	On the other hand,
	$\lim_{t \to +0} f(g(t))=\lim_{t \to +0} h(t)=\hat{y}$ and
	$\hat{y} \not\in h((0, \frac{s}{2}])=f(g((0, \frac{s}{2}]))$.
	Thus $f(g((0, \frac{s}{2}]))$ is not a closed set.
	It contradicts the assumption that $f$ is closed.
	We have proven that $\rho$ is locally bounded.
	
	By Lemma \ref{lem:local_bound}, 
	$\rho$ is bounded.
	Therefore, $f^{-1}(C)$ is bounded.
\end{proof}

\section{Definable proper quotients}\label{sec:def_prop_quo}

We first introduce the definitions of definable quotient and definable proper quotient.
\begin{definition}
	Consider an expansion of a dense linear order without endpoints.
	Let $X$ be a definable set and $E$ be a definable equivalence relation on it.
	A \textit{definable quotient of $X$ by $E$} is 
	a definably identifying map $f:X \to Y$ such that $f(x)=f(x')$ if and only if $(x,x') \in E$.
	A \textit{definably proper quotient of $X$ by $E$} is a definable quotient which is definably proper. 
	The target space $Y$ is sometimes denoted by $X/E$.
	Note that a definable proper quotient is a definable quotient by Corollary \ref{cor:proper_identifying} when the structure is a definably complete expansion of an ordered field.
\end{definition}

The definable quotient is unique up to definable homeomorphisms if it exists.
\begin{proposition}\label{prop:univarsal}
	Consider an expansion of a dense linear order without endpoints.
	Let $X$ and $Y$ be definable sets and $E$ be a definable equivalence relation on $X$.
	Let $q:X \to X/E$ be a definable quotient of $X$ by $E$.
	Consider a definable continuous map $f:X \to Y$ such that $f(x_1)=f(x_2)$ for any $x_1,x_2 \in X$ with $(x_1,x_2) \in E$.
	Then there exists a definable continuous map $\overline{f}:X/E \to Y$ with $\overline{f} \circ q =f$.
	In particular, the target space $X/E$ of the definable quotient is unique up to definable homeomorphisms.
\end{proposition}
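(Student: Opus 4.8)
The plan is to define $\overline{f}$ by the only recipe available and then verify, in order, its well-definedness, definability, the factorization, and finally continuity, the last being the sole nontrivial point. Since $q$ is surjective, every $z \in X/E$ has a nonempty fibre $q^{-1}(z)$, and I would set $\overline{f}(z)=f(x)$ for any $x \in q^{-1}(z)$. This is well defined: if $q(x)=q(x')$ then $(x,x') \in E$ by the definition of a definable quotient, whence $f(x)=f(x')$ by the hypothesis on $f$. The factorization $\overline{f} \circ q = f$ is then immediate, and the graph $\{(z,y) \in (X/E) \times Y : \exists x \in X \ (q(x)=z \text{ and } f(x)=y)\}$ is definable because $q$ and $f$ are, so $\overline{f}$ is a definable map.

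The heart of the matter is continuity of $\overline{f}$, and the only structural tool at hand is that $q$ is definably identifying, a property phrased purely in terms of \emph{definable} subsets. So I would first record the elementary reduction that a definable map $g\colon A \to B$ between definable sets is continuous if and only if $g^{-1}(V)$ is closed in $A$ for every definable closed subset $V$ of $B$. The forward implication is trivial; for the converse, if $g$ were discontinuous at a point $p$, I would pick a basic open box $W=\prod_{i}(a_i,b_i) \ni g(p)$ for which $g^{-1}(W)$ fails to be a neighbourhood of $p$ in $A$, so that $p$ lies in the closure in $A$ of $g^{-1}(B \setminus W)$ while $p \notin g^{-1}(B \setminus W)$. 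Taking $V=B \setminus W$, which is definable and closed in $B$, exhibits a definable closed set whose preimage under $g$ is not closed, as required. This argument uses only the order topology, so it is valid in the stated generality.

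With this reduction established, continuity of $\overline{f}$ follows from the standard quotient argument. Given a definable closed subset $V$ of $Y$, the set $\overline{f}^{-1}(V)$ is a definable subset of $X/E$, and $q^{-1}(\overline{f}^{-1}(V))=(\overline{f} \circ q)^{-1}(V)=f^{-1}(V)$, which is closed in $X$ because $f$ is continuous. Since $q$ is definably identifying, $\overline{f}^{-1}(V)$ is therefore closed in $X/E$. Letting $V$ range over all definable closed subsets of $Y$, the preliminary reduction yields that $\overline{f}$ is continuous. For the uniqueness clause I would take two definable quotients $q_1\colon X \to Q_1$ and $q_2\colon X \to Q_2$ of $X$ by $E$; each $q_i$ is a definable continuous map that is constant on $E$-classes, so applying the universal property just proved in both directions produces definable continuous maps $u\colon Q_1 \to Q_2$ and $v\colon Q_2 \to Q_1$ with $u \circ q_1 = q_2$ and $v \circ q_2 = q_1$. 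Then $v \circ u \circ q_1 = q_1$ and $u \circ v \circ q_2 = q_2$, and surjectivity of $q_1$ and $q_2$ forces $v \circ u = \mathrm{id}_{Q_1}$ and $u \circ v = \mathrm{id}_{Q_2}$, so $u$ is a definable homeomorphism.

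The main obstacle I anticipate is precisely the continuity step: the definition of ``definably identifying'' controls only definable sets, whereas topological continuity refers to all open and closed sets, so the proof must first pass through the observation that for definable maps continuity can be tested on definable closed sets alone. Once that bridge is in place, the factorization and the uniqueness assertion are routine manipulations of the universal property.
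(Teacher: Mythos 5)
Your proof is correct and follows essentially the same route as the paper's: define $\overline{f}$ on fibres, verify well-definedness and the factorization, deduce continuity from the definably identifying property of $q$ by testing preimages of definable closed sets, and obtain uniqueness by applying the universal property in both directions. The only difference is that you explicitly justify the reduction of continuity of a definable map to the definable-closed-set test (via complements of definable boxes), a point the paper's proof uses implicitly; this is a worthwhile added detail rather than a different approach.
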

\begin{proof}
	Our proof is a standard one, but we give it here for completeness.
	Set $\overline{f}(\overline{x})=f(x)$ for any $\overline{x} \in X/E$, where $x \in X$ with $\overline{x}=q(x)$.
	It is obvious that the value $\overline{f}(\overline{x})$ is independent of the representative $x$ of $\overline{x}$.
	It is also obvious that $\overline{f} \circ q =f$.
	We show that $\overline{f}$ is continuous.
	Let $C$ be a definable closed subset of $Y$.
	Since $f$ is continuous, $f^{-1}(C)=q^{-1}(\overline{f}^{-1}(C))$ is closed.
	Since $q$ is definably identifying, $\overline{f}^{-1}(C)$ is closed.
	It means that $\overline{f}$ is continuous.
	
	We next demonstrate the `in particular' part.
	Let $q_i:X \to Y_i$ be definable quotients of $X$ by $E$ for $i=1,2$.
	Apply the first part of the proposition to the definable quotient $q_1$.
	There exists a definable continuous map $\sigma_{12}:Y_1 \to Y_2$ with $q_2=\sigma_{12}\circ q_1$.
	We can find a definable continuous map $\sigma_{21}:Y_2 \to Y_1$ with $q_1=\sigma_{21}\circ q_2$ for the same reason.
	We have $q_1=\sigma_{21}\circ \sigma_{12} \circ q_1$.
	It implies that $\sigma_{21}\circ \sigma_{12}$ is the identity map on $Y_1$ because $q_1$ is surjective.
	The composition $\sigma_{12}\circ \sigma_{21}$ is the identity map on $Y_2$ for the same reason.
	We have demonstrated that $\sigma_{12}$ is a definable homeomorphism.
\end{proof}

We next recall the definition of definably proper definable equivalence relation.
\begin{definition}
	Consider an expansion of a dense linear order without endpoints.
	Let $E \subseteq X \times X$ be a definable equivalence relation on a definable set $X$.
	Let $p_i:E \to X$ be the restriction of the projection $X \times X \to X$ onto the $i$-th factor $X$ to $E$ for $i=1,2$.
	The definable equivalence relation $E$ is \textit{definably proper over $X$} if $p_1$ (equivalently, $p_2$) is a definably proper map.
\end{definition}

We demonstrate that a definable proper quotient exists when the definable equivalence relation is definably proper and the structure is a definably complete locally o-minimal expansion of an ordered field.
It is already demonstrated in \cite[Chapter 10, Theorem 2.15]{D} when the structure is o-minimal.
The proof is almost the same even in the locally o-minimal case except the proof of Proposition \ref{prop:attach}, but we give a complete proof here for completeness.

We introduce several technical definitions.
\begin{definition}
	Consider an expansion of a dense linear order without endpoints $\mathcal F=(F,<,\ldots)$.
	Let $S_1,\ldots, S_k$ be definable sets in $F^{m_1},\ldots, F^{m_k}$ for $k \geq 1$.
	A \textit{disjoint sum} of $S_1,\ldots, S_k$ is a tuple $(h_1,\ldots, h_k,T)$ consisting of a definable set $T \subseteq F^n$ and definable maps $h_i:S_i \to T$ such that
	\begin{enumerate}
		\item[(i)] $h_i$ is a definable homeomorphism onto $h_i(S_i)$ and $h_i(S_i)$ is open in $T$ for $1 \leq i \leq k$;
		\item[(ii)] $T$ is the disjoint union of the sets $h_1(S_1), \ldots, h_k(S_k)$.
	\end{enumerate}

   The existence of the disjoint sum is easy to be proven.
   Set $n=1+\max_{1 \leq i \leq k}m_i$ and define $h_i:S_i \to F^n$ by $h_i(x)=(x,i,\ldots,i)$.
   Then, $(h_1,\ldots, h_k,\cup_{i=1}^k h_i(S_i))$ is a disjoint sum of  $S_1,\ldots, S_k$.
   It immediately follows from the definition that, for two disjoint sums $(h_1,\ldots, h_k,T)$ and $(h'_1,\ldots, h'_k,T')$ of $S_1,\ldots, S_k$, there exists a definable homeomorphism $\lambda:T \to T'$ such that $\lambda \circ h_i=h'_i$ for $1 \leq i\leq k$.
   It implies that a disjoint sum is uniquely determined up to definable homeomorphisms.
   We write $S_1 \amalg \cdots \amalg S_k$ for $T$ and identify each $S_i$ with its image in $S_1 \amalg \cdots \amalg S_k$ via $h_i$.
\end{definition}

\begin{definition}
	Consider an expansion of a dense linear order without endpoints.
	Let $X$, $Y$ and $A$ be definable sets with $A \subseteq X$.
	Let $f:A \to Y$ be a definable continuous map.
	We define the definable equivalence relation $E(f)$ on $X \amalg Y$ by
	\begin{align*}
		E(f) =&\Delta(X) \cup \Delta(Y) \cup \{(a,f(a))\;|\; a \in A\} \cup \{(f(a),a)\;|\; a \in A\}\\
		 &\cup\{(a_1,a_2) \in A \times A\;|\; f(a_1)=f(a_2)\},
	\end{align*}
	where $\Delta(X):=\{(x,x) \in X \times X\;|\; x \in X\}$ and $\Delta(Y):=\{(y,y) \in Y \times Y\;|\;y \in Y\}$ are the diagonals of $X$ and $Y$, respectively.
	Let $X \amalg_f Y$ be the definable quotient of $X \amalg Y$ by $E(f)$ if it exists.
\end{definition}

We demonstrate that definable proper quotient exists for the definable equivalence relation defined above.
\begin{lemma}\label{lem:attach}
	Consider a definably complete locally o-minimal expansion of an ordered field $\mathcal F=(F,<,+,\cdot,0,1,\ldots)$.
	Let $X$, $Y$ and $A$ be definable sets with $A \subseteq X$.
	Let $f:A \to Y$ be a definable continuous map.
	Assume that $A$ is closed and bounded.
	Then $X \amalg_f Y$ exists as a definable proper quotient.
\end{lemma}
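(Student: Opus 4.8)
The plan is to realize $X \amalg_f Y$ as the image of an explicit definably proper surjection and then invoke Corollary \ref{cor:proper_identifying}. Concretely, I will build a definable set $Z$ and a definable continuous surjection $q : X \amalg Y \to Z$ whose fibers are exactly the $E(f)$-classes, and show that $q$ is definably proper. Then $q$ is definably identifying by Corollary \ref{cor:proper_identifying}, so it is a definable quotient with the prescribed fibers; being definably proper, it is a definable proper quotient, and $Z = X \amalg_f Y$. We may assume $A \neq \emptyset$, the case $A = \emptyset$ being trivial since then the $E(f)$-classes are singletons and the identity map on $X \amalg Y$ serves as the quotient, which is definably proper by Lemma \ref{lem:via_homeo}.

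To construct $q$, let $\delta : F^m \to F$, $\delta(x) = \inf\{\,|x-a| \;|\; a \in A\,\}$, be the distance function to $A$ used in Proposition \ref{prop:zero}; it is definable, continuous, nonnegative, and vanishes exactly on $A$ because $A$ is closed. By the definable Tietze extension theorem (Theorem \ref{thm:tietze}), applied coordinatewise, extend $f$ to a definable continuous map $\bar f : F^m \to F^n$ with $\bar f|_A = f$. Regarding $X$ and $Y$ as the clopen pieces of the disjoint sum $X \amalg Y$, define $q$ into $F^n \times F \times F^m$ by
$$ q(x) = (\bar f(x),\ \delta(x),\ \delta(x)\, x) \quad (x \in X), \qquad q(y) = (y,\ 0,\ 0) \quad (y \in Y). $$
This is definable and continuous on each clopen piece, hence on $X \amalg Y$. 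Inspecting the middle coordinate shows that a point of $A$ (where $\delta = 0$) never collides with a point of $X \setminus A$ (where $\delta > 0$), that $q$ is injective on $X \setminus A$ (since $\delta(x) = \delta(x')$ and $\delta(x)x = \delta(x')x'$ force $x = x'$ when these values are positive), and that $q(x) = q(y)$ with $x \in X$, $y \in Y$ holds iff $x \in A$ and $y = f(x)$, while $q(x) = q(x')$ with $x, x' \in A$ holds iff $f(x) = f(x')$. Thus the fibers of $q$ are precisely the $E(f)$-classes. Set $Z := q(X \amalg Y)$.

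It remains to prove that $q$ is definably proper, which I will do via Lemma \ref{lem:proper_eq}. Let $\gamma : (0,\varepsilon) \to X \amalg Y$ be a definable curve with $q \circ \gamma$ completable in $Z$, say $q(\gamma(t)) \to z = (p,s,w) \in Z$. Since $X$ and $Y$ are clopen in $X \amalg Y$ and open intervals are definably connected in a definably complete structure, $\gamma$ takes values entirely in $X$ or entirely in $Y$. If $\gamma$ lands in $Y$, the first coordinate of $q(\gamma(t))$ is $\gamma(t)$, so $\gamma(t) \to p$; since $z \in Z$ has vanishing second and third coordinates we get $p \in Y$, so $\gamma$ is completable. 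If $\gamma$ lands in $X$, distinguish $s > 0$ and $s = 0$. When $s > 0$, the limit $z$ has positive middle coordinate, so $z = q(x_0)$ forces $x_0 \in X \setminus A$ with $\delta(x_0) = s$ and $\delta(x_0)x_0 = w$; then $\gamma(t) = \bigl(\delta(\gamma(t))\gamma(t)\bigr)/\delta(\gamma(t)) \to w/s = x_0 \in X$.

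The main obstacle is the remaining case $s = 0$, i.e. $\delta(\gamma(t)) \to 0$: here the middle and third coordinates of $q \circ \gamma$ carry no information about $\gamma(t)$ itself, so completability of $q \circ \gamma$ does not directly produce a limit for $\gamma$. This is exactly where the hypothesis that $A$ is closed and bounded is essential, and where the argument departs from the o-minimal case. Since $\delta(\gamma(t)) \to 0$ and $A$ is bounded, $\gamma(t)$ lies within bounded distance of the bounded set $A$ for small $t$, so $\gamma$ has bounded image on some $(0,\varepsilon')$; applying Proposition \ref{prop:mono2} to $\gamma|_{(0,\varepsilon')}$ as a curve into the closed bounded set $\mycl(\gamma((0,\varepsilon')))$ yields $\gamma(t) \to p'$, and continuity of $\delta$ gives $\delta(p') = 0$, i.e. $p' \in A \subseteq X$. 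Hence $\gamma$ is completable in $X$ in every case, so $q$ is definably proper by Lemma \ref{lem:proper_eq}, which completes the construction.
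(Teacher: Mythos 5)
Your proposal is correct and follows essentially the same route as the paper: the same gluing map $(\bar f(x),\delta(x),\delta(x)x)$ on $X$ and $(y,0,0)$ on $Y$ built from the definable Tietze extension and the distance function to $A$, with properness checked curve-wise via Lemma \ref{lem:proper_eq}. The only (harmless) variation is in the $\delta(\gamma(t))\to 0$ subcase, where you bound $\gamma$ directly using the boundedness of $A$ and apply Proposition \ref{prop:mono2}, whereas the paper selects a definable nearest point $\rho(t)\in A$ and passes to its limit.
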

\begin{proof}
	If $A$ is an empty set, the identity map $X \amalg Y \to X \amalg Y$ is a definable proper quotient of $X \amalg Y$ by $E(f)$.
	So we assume that $A$ is not an empty set.
	We identify $X$ and $Y$ with their images in $X \amalg Y \subseteq F^n$.
	It is obvious that $A$ is closed and bounded in $F^n$ by the definition of $X \amalg Y$ and Lemma \ref{lem:image}.
	Let $d_A:F^n \to F$ be the distance function defined by $d_A(x):= \inf\{|x-a|\;|\; a \in A\}$.
	We also get a definable continuous extension $\widetilde{f}:X \to F^n$ of the definable continuous map $f:A \to Y$.
	It exists by Theorem \ref{thm:tietze}.
	Define the definable continuous map $p:X \amalg Y \to F^{2n+1}$ by
	\begin{align*}
		p(x)= \left\{\begin{array}{ll}
			(\widetilde{f}(x),d_A(x)\cdot x, d_A(x)) & \text{ if }x \in X,\\
			(x,0,0) & \text{ if } x \in Y.
			\end{array}
		\right.
	\end{align*} 
	Set $Z=p(X \amalg Y) $.
	We want to show that the induced map $p:X \amalg Y \to Z$ is a definable proper quotient.
	
	It is easy to check that $(z_1,z_2) \in E(f)$ if and only if $p(z_1)=p(z_2)$ for any $z_1,z_2 \in X \amalg Y$.
	We omit the details.
	The remaining task is to demonstrate that $p:X \amalg Y \to Z$ is definably proper.
	Let $\gamma:(0,\varepsilon) \to X \amalg Y$ be a definable curve such that $p \circ \gamma:(0,\varepsilon) \to Z$ is completable in $Z$.
	We have only to demonstrate that $\gamma$ is completable in $X \amalg Y$ by Lemma \ref{lem:proper_eq}.
	For latter use, we define the projections $\pi_1:F^n \times F^n \times F \to F^n$, $\pi_2:F^n \times F^n \times F \to F^n$ and $\pi_3:F^n \times F^n \times F \to F$.
	The projection $\pi_i$ is the projection of $F^n \times F^n \times F$ onto the $i$-th factor for each $1 \leq i \leq 3$.
	Set $z=\lim_{t \to 0} p(\gamma(t))$.
	
	By local o-minimality, there exists $0<\delta \leq \varepsilon$ such that either $\gamma((0,\delta))$ is contained in $X$ or it is contained in $Y$.
	We may assume that the image of $\gamma$ is entirely contained in either $X$ or $Y$ by taking a smaller $\varepsilon>0$ if necessary.
	The case in which $\gamma((0,\varepsilon)) \subseteq Y$ is simple.
	We first consider this case.
	By the definition of $p$, the composition $\pi_1 \circ p|_{Y}$ is the identity map on $Y$.
	We have $\lim_{t \to 0}\gamma(t)=\pi_1(z)$ because $\gamma(t)=\pi_1(p(\gamma(t)))$ for $0 < t < \varepsilon$ in this case.
	It means that $\gamma$ is completable in $X \amalg Y$.
	
	We next consider the case in which $\gamma((0,\varepsilon)) \subseteq X$.
	In the same way as above, we may assume that either $\gamma((0,\varepsilon)) \subseteq A$ or $\gamma((0,\varepsilon)) \subseteq X \setminus A$.
	In the former case, $\gamma$ is completable in $A$ by Proposition \ref{prop:mono2}, and it is also completable in $X$.
	The remaining case is the case in which $\gamma((0,\varepsilon)) \subseteq X \setminus A$.
	Set $d=\inf\{d_A(\gamma(t))\;|\;0<t \leq \varepsilon/2\}$, which exists by definable completeness.
	We consider two separate cases in which $d=0$ and $d>0$.
	We first consider the case in which $d>0$.
	The map $x \mapsto \pi_2(p(x))/\pi_3(p(x))$ is the identity map when $x \in X \setminus A$.
	Therefore, we have $\lim_{t \to 0} \gamma(t)=\pi_2(z)/\pi_3(z)$ in this case and $\gamma$ is completable in $X \amalg Y$.
	
	The last case is the case in which $d=0$.
	We obviously have $\lim_{t \to 0} d_A(\gamma(t))=0$ in this case.
	For any $0<t<\varepsilon$, the definable set $\{a \in A\;|\; |\gamma(t)-a|=d_A(\gamma(t))\}$ is not empty by Lemma \ref{lem:image}.
	By Proposition \ref{prop:definable_choice}, we can find a definable map $\rho:(0,\varepsilon) \to A$ such that $d_A(\gamma(t))=|\rho(t)-\gamma(t)|$.
	We may assume that $\rho$ is continuous by Proposition \ref{prop:mono} by taking a smaller $\varepsilon>0$ if necessary.
	The limit $v=\lim_{t \to 0}\rho(t)$ exists in $A$ by Proposition \ref{prop:mono2}.
	It is obvious that $v=\lim_{t \to 0} \gamma(t)$ and $\gamma$ is completable in $X \amalg Y$. 
\end{proof}

The proof of the following proposition is different from the proof of the counterpart in the o-minimal case \cite[Chapter 10, Proposition 2.12]{D}.
In the proof of \cite[Chapter 10, Proposition 2.12]{D}, the triangulation theorem for definable sets is used and it is unavailable in our case.
We demonstrate the proposition without using the triangulation theorem under an extra assumption that $X$ is locally closed. 
\begin{proposition}\label{prop:attach}
	Consider a definably complete locally o-minimal expansion of an ordered field $\mathcal F=(F,<,+,\cdot,0,1,\ldots)$.
	Let $X$, $Y$ and $A$ be definable sets with $A \subseteq X$.
	Let $f:A \to Y$ be a definably proper map.
	Assume that $X$ is locally closed and $A$ is closed in $X$.
	Then $X \amalg_f Y$ exists as a definable proper quotient.
\end{proposition}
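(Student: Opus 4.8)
The plan is to imitate the explicit construction of the quotient map from the proof of Lemma \ref{lem:attach}, but to replace the two places there where boundedness of $A$ was used — the existence of a nearest point of $A$ and the completion of the auxiliary curve via Proposition \ref{prop:mono2} — by the definable properness of $f$. Before doing so I would make two reductions. First, I reduce to the case that $X$, and hence $A$, is bounded: applying a coordinatewise bounding homeomorphism $F^n\to(-1,1)^n$ (definable over an ordered field) carries $X$ to a bounded set, local closedness is preserved because it is tested inside the open cube, closedness of $A$ in $X$ is preserved since the map is a homeomorphism, and definable properness of $f$ is preserved because the image of a closed bounded set under a definable continuous map is again closed and bounded by Lemma \ref{lem:image}. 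Second, since $A$ is closed in $X$ but generally not closed in $F^n$, I cannot invoke Theorem \ref{thm:tietze} directly; instead I realize $X$ as a closed set through the graph embedding $x\mapsto(x,1/d_{\partial X}(x))$, whose image is closed because $\partial X$ is closed (as $X$ is locally closed) while $d_{\partial X}$ is continuous and strictly positive on $X$. After this change of coordinates $A$ is closed in the ambient space, so Theorem \ref{thm:tietze} applies.

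Next I build the quotient map. To keep track of the frontier $\mycl(A)\setminus A$ I first compose $f$ with a definable compactification $\iota$ of the target (a stereographic map sending the ends of $Y$ to a single frontier point). Definable properness of $f$, together with $A$ bounded, guarantees that along any definable curve in $A$ converging to a point of $\mycl(A)\setminus A$ the values of $f$ leave every closed bounded subset of $Y$ (otherwise Lemma \ref{lem:proper_eq} would force the curve to complete inside $A$); hence $\iota\circ f$ extends to a definable continuous $\overline f\colon\mycl(A)\to\widehat Y$ mapping $\mycl(A)\setminus A$ into $\widehat Y\setminus\iota(Y)$, and Theorem \ref{thm:tietze} extends it further to a definable continuous $\widehat f$. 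With $d_A(x)=\inf\{|x-a|:a\in A\}$ I then set $p(x)=(\widehat f(x),\,d_A(x)\cdot x,\,d_A(x))$ for $x\in X$ and $p(y)=(\iota(y),0,0)$ for $y\in Y$, and put $Z=p(X\amalg Y)$. Using that $d_A(x)=0$ iff $x\in A$ for $x\in X$ (since $A$ is closed in $X$) and that $\iota$ and the block $x\mapsto(d_A(x)x,d_A(x))$ are injective on $X\setminus A$, one checks that $p(z_1)=p(z_2)$ exactly when $(z_1,z_2)\in E(f)$, and $p$ is continuous on the clopen pieces $X$ and $Y$. It then remains to prove $p\colon X\amalg Y\to Z$ is definably proper, after which Corollary \ref{cor:proper_identifying} shows it is a definably proper quotient, i.e.\ $Z=X\amalg_f Y$.

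I would verify properness through Lemma \ref{lem:proper_eq}: given a definable curve $\gamma$ with $p\circ\gamma$ completable in $Z$, I show $\gamma$ is completable. By local o-minimality I may assume $\gamma$ lies entirely in $Y$, in $A$, or in $X\setminus A$. The case $\gamma\subseteq Y$ is immediate by reading off the first coordinate, and the case $\gamma\subseteq A$ follows because $p\circ\gamma$ completable forces $f\circ\gamma$ completable in $Y$, whence Lemma \ref{lem:proper_eq} applied to $f$ itself completes $\gamma$ in $A$ — this is precisely where properness of $f$ substitutes for the compactness argument of Lemma \ref{lem:attach}. For $\gamma\subseteq X\setminus A$ set $d=\inf d_A(\gamma)$; when $d>0$ the identity $x=\tfrac{d_A(x)x}{d_A(x)}$ recovers the limit in $X\setminus A$ as in Lemma \ref{lem:attach}. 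The main obstacle is the subcase $d=0$: here $d_A(\gamma(t))\to0$, and since $X$ is bounded the curve is bounded, so choosing nearest points $\rho(t)\in\mycl(A)$ by Proposition \ref{prop:definable_choice} and Lemma \ref{lem:image} and completing them by Proposition \ref{prop:mono2} (after Proposition \ref{prop:mono}) yields $\gamma(t),\rho(t)\to v\in\mycl(A)$. If $v\in A$ then $\gamma$ completes in $X$; the dangerous possibility is $v\in\mycl(A)\setminus A\subseteq\partial X$, where $\gamma$ does not complete in $X\amalg Y$. This is exactly the point where definable properness (not mere continuity) is indispensable and where boundedness of $A$ was silently used in Lemma \ref{lem:attach}: the compactified extension satisfies $\widehat f(\gamma(t))\to\overline f(v)\in\widehat Y\setminus\iota(Y)$, so the first coordinate of $\lim p(\gamma(t))$ escapes $\iota(Y)$ while its last two coordinates vanish, forcing this limit out of $Z$ and contradicting completability of $p\circ\gamma$. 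Thus the bad subcase cannot occur, $\gamma$ always completes, and $p$ is definably proper. I expect the delicate step to be the construction and continuity of $\overline f$ across $\mycl(A)\setminus A$ — equivalently, the exclusion of this last subcase — for arbitrary definable $Y$, and this is the technical heart that distinguishes the proper case from Lemma \ref{lem:attach}.
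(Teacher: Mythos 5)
Your overall architecture is plausible and you correctly locate the difficulty at the frontier $\mycl(A)\setminus A$, but there are two genuine gaps. The first is that your two preliminary reductions are incompatible in the order you perform them: the graph embedding $x\mapsto(x,1/d_{\partial X}(x))$ that makes $X$ closed in the ambient space sends $X$ to an unbounded set (the second coordinate blows up near $\partial X$), so after this step you may no longer assume $X$ is bounded --- yet boundedness is used essentially in your final case analysis (``since $X$ is bounded the curve is bounded, so \dots Proposition \ref{prop:mono2}''). If instead you keep $X$ bounded and not closed, then $A$ need not be closed in the ambient space, $\mycl(A)\setminus A$ is genuinely nonempty, and Theorem \ref{thm:tietze} does not apply to $A$. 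You cannot arrange both properties for $X$ itself; at some point one must pass to $\mycl(X)$, $\mycl(A)$, $\mycl(Y)$.

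The second and more serious gap is the construction of $\overline f$. You assert that $\iota\circ f$ ``extends to a definable continuous $\overline f\colon\mycl(A)\to\widehat Y$'', justified by the observation that along a curve in $A$ converging to a point of $\mycl(A)\setminus A$ the values of $f$ eventually leave every closed bounded definable subset of $Y$. That observation is correct, but it only says that $f\circ\gamma$ is not completable in $Y$; it does not produce a well-defined limit in $\widehat Y$. The values may converge to a point of $\partial Y$ rather than escape to infinity, and different curves approaching the same $v\in\mycl(A)\setminus A$ may yield different points of $\partial Y$: take $A=X=\{(s,u)\in F^2: u>0\}$ and $f(s,u)=(s/u,u)$, a definable homeomorphism onto $Y=F\times(0,\infty)$ and hence definably proper by Lemma \ref{lem:via_homeo}; along $(t,t)$ and $(2t,t)$ the images converge to $(1,0)$ and $(2,0)$ respectively, so no continuous extension of $\iota\circ f$ to $\mycl(A)$ exists for any injective bounded embedding $\iota$. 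Making the extension exist forces you to collapse all of $\mycl(\iota(Y))\setminus\iota(Y)$ to a single point, and realizing that collapse definably while keeping $p|_Y$ injective and $p(a)=p(f(a))$ is additional work you have not supplied. The paper's proof avoids this issue entirely: after making $X$ closed it replaces $X$ and $A$ by their images in the graph of a Tietze extension $\widetilde f$ of $f$, so that $f$ becomes the restriction of a coordinate projection and the extension $\mycl(f)\colon\mycl(A)\to\mycl(Y)$ is automatically continuous; the only thing to prove is the equality $\mycl(f)^{-1}(Y)=A$ (this is where definable properness enters, via Lemma \ref{lem:proper_eq}), after which Lemma \ref{lem:attach} applied to the closures, followed by restriction, finishes the argument. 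I recommend repairing your proof along those lines rather than constructing $\overline f$ directly.
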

\begin{proof}
	Let $F^m$ and $F^n$ be the ambient spaces of $X$ and $Y$, respectively.
	We first reduce to the case in which $X$ and $Y$ are bounded, and $f$ is the restriction of the coordinate projection onto the last $n$ coordinates to $A$.
	
	We temporarily reduce to the case in which $X$ is closed.
	Since $X$ is locally closed, the frontier $\partial X$ is closed.
	There exists a definable function $h:F^m \to F$ whose zero set is $\partial X$ by Proposition \ref{prop:zero}.
	Consider the definable map $\rho:F^m \setminus \partial X \to F^{m+1}$ defined by $\rho(x)=(x,1/h(x))$.
	We may assume that $X$ is closed by considering $\rho(X)$ and $\rho(A)$ in place of $X$ and $A$, respectively, because $f \circ (\rho|_X)^{-1}$ is definably proper by Lemma \ref{lem:via_homeo}.
	Like in this case, we have to check that a new $f$ is still definably proper every time we replace $X$, $Y$ and $A$ with other sets using Lemma \ref{lem:via_homeo}.
	We omit the checks in the proof.
	
	Since $A$ is closed in $X$ and $X$ is closed in $F^m$, $A$ is also closed in $F^m$.
	There exists a definable continuous extension $\widetilde{f}:F^m \to F^n$ of $f$ by Theorem \ref{thm:tietze}.
	Consider the graph $\Gamma(\widetilde{f}) \subseteq F^{m+n}$ of $\widetilde{f}$.
	Let $\pi_1:F^{m+n} \to F^m$ and $\pi_2:F^{m+n} \to F^n$ be the coordinate projections onto the first $m$ coordinates and onto the last $n$ coordinates, respectively.
	Set $X'=\pi_1^{-1}(X) \cap \Gamma(\widetilde{f})$ and $A'=\pi_1^{-1}(A) \cap \Gamma(\widetilde{f})$.
	We may assume that $f$ is the restriction of $\pi_2$ to $A'$ considering $X'$ and $A'$ instead of $X$ and $A$, respectively.
	Let $\psi:F \to (-1,1)$ be the definable homeomorphism given by $\psi(x)=\dfrac{x}{\sqrt{1+x^2}}$.
	Let $\psi_m:F^m \to (-1,1)^m$ be the map given by $\psi_m(x_1,\ldots,x_m)=(\psi(x_1),\ldots, \psi(x_m))$ and we define $\psi_n$ similarly.
	Replacing $X$ and $Y$ with $\psi_m(X)$ and $\psi_n(Y)$, we may further assume that $X$ and $Y$ are bounded.
	It is obvious that $\psi_m(X)$ is locally closed when $X$ is closed.
	
	Let $\mycl(f):\mycl(A) \to \mycl(Y)$ be the restriction of the projection $\pi_2$ to $\mycl(A)$.
	It is a definable continuous extension of $f$ to $\mycl(A)$ because $f$ is the restriction of $\pi_2$ to $A$ by the assumption.
	We demonstrate that 
	\begin{equation}
	\mycl(f)^{-1}(Y)=A. \label{eq:1}
	\end{equation}
	Let $x$ be a point in $\mycl(A)$ with $\mycl(f)(x) \in Y$.
	We have only to show that $x \in A$.
	By Proposition \ref{prop:curve_selection}, there exists a definable curve $\gamma:(0,\varepsilon) \to A$ such that $\lim_{t \to 0}\gamma(t)=x$.
	Since $\mycl(f)$ is an extension of $f$, we have $\mycl(f)(\gamma((0,\varepsilon)))=f(\gamma((0,\varepsilon))) \subseteq Y$.
	Since $\mycl(f)$ is continuous, we have $$\lim_{t \to 0}f(\gamma(t))=\lim_{t \to 0}\mycl(f)(\gamma(t))=\mycl(f)(\lim_{t \to 0}\gamma(t))=\mycl(f)(x) \in Y.$$
	
	Let $\gamma':[0,\varepsilon/2] \to \mycl(A)$ be the map defined by $\gamma'(0)=x$ and $\gamma'(t)=\gamma(t)$ for $0<t \leq \varepsilon/2$.
	It is a definable continuous map and the image $\mycl(f)(\gamma'([0,\varepsilon/2]))$ is closed and bounded by Lemma \ref{lem:image}.
	Since $\mycl(f)$ is an extension of $f$, we have $\mycl(f)(\gamma'((0,\varepsilon/2]))=f(\gamma((0,\varepsilon/2])) \subseteq Y$.
	It implies that $f \circ \gamma \to \mycl(f)(x) \in Y$.
	In other words, the definable map $f \circ \gamma:(0,\varepsilon) \to Y$ is completable in $Y$.
	Since $f$ is definably proper, $\gamma$ is completable in $A$ by Lemma \ref{lem:proper_eq}.
	It implies that $x = \lim_{t\to 0}\gamma(t) \in A$.
	We have demonstrated the equality (\ref{eq:1}).

	There exists a definable proper quotient $\mycl(p):\mycl(X) \amalg \mycl(Y) \to \mycl(X) \amalg_{\mycl(f)} \mycl(Y)$ by Lemma \ref{lem:attach}.
	We naturally identify $X \amalg Y$ with a subset of $\mycl(X) \amalg \mycl(Y)$.
	Set $Z=\mycl(p)(X \amalg Y)$.
	It is easy to check that, if $x \in X \amalg Y$ and $(x,y) \in E(\mycl(f))$, we have $y \in X \amalg Y$ by the equality (\ref{eq:1}).
	It implies that $\mycl(p)^{-1}(Z)=X \amalg Y$ and $E(\mycl(f)) \cap ((X \amalg Y)\times(X \amalg Y))=E(f)$.
	Hence, $p:=\mycl(p)|_{X \amalg Y}:X \amalg Y \to Z$ is a definable proper quotient of $X \amalg Y$ by $E(f)$. 
\end{proof}

\begin{lemma}\label{lem:complete}
	Consider a definably complete locally o-minimal expansion of an ordered field.
	Let $X$ be a definable subset and $E \subseteq X \times X$ be a definable equivalence relation on $X$ which is definably proper over $X$.
	Let $p_i:E \to X$ be the restriction of the projection $X \times X \to X$ onto the $i$-th factor to $E$ for $i=1,2$.
	Let $\gamma:(0,\varepsilon) \to E$ be a definable curve and set $\alpha = p_1 \circ \gamma$ and $\beta = p_2 \circ \gamma$.
	If either $\alpha$ or $\beta$ is completable in $X$, both are completable in $X$.
	Furthermore, in that case, we have $\gamma \to (p,q)$ when $\alpha \to p$ and $\beta \to q$. 
\end{lemma}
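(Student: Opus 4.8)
The plan is to reduce the whole statement to a single application of Lemma \ref{lem:proper_eq} to the definably proper projections $p_1$ and $p_2$, together with a routine passage from the limit of $\gamma$ to its coordinatewise limits.

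First I would record that \emph{both} $p_1$ and $p_2$ are definably proper. By hypothesis $E$ is definably proper over $X$, which by definition means $p_1$ (equivalently $p_2$) is definably proper. The equivalence is justified as follows: since $E$ is an equivalence relation it is symmetric, so the coordinate swap $\sigma:E \to E$, $(x,y) \mapsto (y,x)$, is a well-defined definable homeomorphism; as $p_2 = p_1 \circ \sigma$ and definable properness is preserved under composition with a definable homeomorphism by Lemma \ref{lem:via_homeo}, the map $p_2$ is definably proper whenever $p_1$ is. Using this same symmetry it suffices to treat the case in which $\alpha = p_1 \circ \gamma$ is completable in $X$; the case where $\beta$ is completable follows by replacing $\gamma$ with $\sigma \circ \gamma$, which interchanges the roles of $\alpha$ and $\beta$.

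Next, since $p_1$ is definably proper and $p_1 \circ \gamma = \alpha$ is completable in $X$, Lemma \ref{lem:proper_eq} applied to $p_1$ and the definable curve $\gamma:(0,\varepsilon) \to E$ yields directly that $\gamma$ is completable in $E$. Write $(p,q) = \lim_{t \to 0}\gamma(t) \in E$, i.e.\ $\gamma \to (p,q)$. Finally I would pass from the limit of $\gamma$ to the limits of $\alpha$ and $\beta$: as $p_1$ and $p_2$ are restrictions of the continuous coordinate projections, they are continuous, so $\alpha = p_1 \circ \gamma \to p_1(p,q) = p$ and $\beta = p_2 \circ \gamma \to p_2(p,q) = q$. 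This is simply componentwise convergence in the product topology of the limit $\lim_{t \to 0}\gamma(t) = (p,q)$. Hence both $\alpha$ and $\beta$ are completable in $X$, with $\alpha \to p$ and $\beta \to q$, which simultaneously gives the ``furthermore'' clause that $\gamma \to (p,q)$.

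I do not anticipate a serious obstacle, since the substantive content is already packaged in Lemma \ref{lem:proper_eq}; the only points requiring a little care are the justification that $p_2$ is proper whenever $p_1$ is (via symmetry of $E$ and Lemma \ref{lem:via_homeo}) and the harmless extraction of the coordinatewise limits from the limit of $\gamma$.
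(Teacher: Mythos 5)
Your proposal is correct and follows exactly the paper's own argument: apply Lemma \ref{lem:proper_eq} to the definably proper projection $p_1$ to conclude that $\gamma$ is completable in $E$, then project the limit to get completability of $\alpha$ and $\beta$. The extra details you supply (the coordinate-swap homeomorphism justifying that $p_2$ is proper whenever $p_1$ is, and the reduction of the $\beta$-case to the $\alpha$-case by symmetry) are points the paper leaves implicit but are entirely consistent with its proof.
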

\begin{proof}
	Assume that $\alpha$ is completable in $X$.
	Since $p_1$ is definably proper, $\gamma$ is completable in $E$ by Lemma \ref{lem:proper_eq}.
	 It is obvious that $\beta$ is also completable in $X$.
	 The `furthermore' part is obvious.
\end{proof}
We are ready to prove the following theorem:
\begin{theorem}\label{thm:quotient-mae}
	Consider a definably complete locally o-minimal expansion of an ordered field $\mathcal F=(F,<,+,\cdot,0,1,\ldots)$.
	Let $X$ be a nonempty locally closed definable set and $E$ be a definable equivalence relation on $X$ which is proper over $X$.
	Then there exists a definable proper quotient $\pi:X \to X/E$.
\end{theorem}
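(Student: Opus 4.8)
The plan is to argue by induction on $d=\dim X$, splitting the problem into a ``good'' full-dimensional saturated open piece that is handled directly by a definable section, and a lower-dimensional saturated closed piece that is handled by the inductive hypothesis, and then to glue the two resulting quotients by the attaching construction of Proposition \ref{prop:attach}.

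First I would record two facts used throughout. Since $p_1$ is definably proper and each singleton $\{x\}\subseteq X$ is closed and bounded, the fibre $p_1^{-1}(x)$ is closed and bounded, so every equivalence class $[x]=p_2(p_1^{-1}(x))$ is closed and bounded by Lemma \ref{lem:image}. Also, applying a coordinatewise homeomorphism $F\to(-1,1)$ and invoking Lemma \ref{lem:via_homeo}, Lemma \ref{lem:proper_eq} and Proposition \ref{prop:univarsal}, we may assume $X$ is bounded; local closedness and the properness of $E$ are preserved. For the base case $d=0$ the set $X$ is discrete and closed by Lemma \ref{lem:dim0}, so every definable map out of $X$ is continuous; taking a section $S$ by Proposition \ref{prop:definable_choice} and the representative map $\pi\colon X\to S$, one checks that $\pi$ is surjective, separates exactly the classes, and is definably proper because $\pi^{-1}(K)=p_1(p_2^{-1}(K))$ is closed and bounded for every closed bounded $K\subseteq S$ (properness together with Lemma \ref{lem:image}). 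Hence $\pi$ is a definable proper quotient by Corollary \ref{cor:proper_identifying}.

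For the inductive step ($d>0$) I would choose a section $S\subseteq X$ (Proposition \ref{prop:definable_choice}) and study the representative map $r\colon X\to S$. The heart of the argument is to produce a definable set $U$ that is open in $X$, saturated for $E$, of dimension $d$, and on which the quotient is realised through the section, so that the induced map $U\to S\cap U$ is continuous and, via the curve criterion of Lemma \ref{lem:proper_eq} combined with the compatibility Lemma \ref{lem:complete}, definably proper; meanwhile the complement $B=X\setminus U$ should be closed in $X$, saturated, and of dimension $<d$. Here Lemma \ref{lem:openInX} and Lemma \ref{lem:dim} are used to bound the dimension of the locus where $r$ fails to be continuous or proper, and the fact that classes are closed and bounded is what should allow this bad locus to be saturated while keeping its dimension below $d$. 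Since $B$ is closed in the locally closed set $X$ it is again locally closed, and $E|_{B}$ is proper over $B$, so by the inductive hypothesis there is a definable proper quotient $q_B\colon B\to B/E|_B$.

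Finally I would assemble $X/E$ from $Q_U:=U/E$ and $Q_B:=B/E|_B$ by a single application of Proposition \ref{prop:attach}: taking the closure $\overline{Q_U}$ as the (locally closed) attaching space, its frontier $A:=\overline{Q_U}\setminus Q_U$ as the closed subset, and a map $f\colon A\to Q_B$ sending each limiting class to its class in $B$, one should obtain $X/E\cong\overline{Q_U}\amalg_f Q_B$. The main obstacle is exactly this last construction and its verification: isolating the good saturated open set $U$ with a \emph{saturated} lower-dimensional complement (so that saturation does not raise dimension), identifying the gluing map $f$, and checking that $f$ is definably proper, that $A$ is closed in the locally closed attaching space, and that the assembled space is genuinely the quotient $X/E$, namely definably identifying with fibres exactly the $E$-classes. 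All of these properness and identification claims I would verify through the curve criteria of Lemma \ref{lem:proper_eq} and Lemma \ref{lem:identifying_eq}, with the compatibility Lemma \ref{lem:complete} serving as the key device that transfers completability of curves between $X$ and $E$ and thereby matches limits of classes across the gluing.
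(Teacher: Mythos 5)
Your overall strategy (induction on $d=\dim X$, a section $S$ from Proposition \ref{prop:definable_choice}, and gluing via Proposition \ref{prop:attach}) matches the paper's, and your base case is essentially identical. But the inductive step contains a genuine gap. You require a decomposition $X=U\cup B$ with $U$ open and \emph{saturated}, and $B=X\setminus U$ closed, \emph{saturated}, and of dimension $<d$. The last two requirements are in general incompatible: the saturation $p_2(p_1^{-1}(T))$ of a set $T$ of dimension $<d$ can have dimension $d$, because a single equivalence class can already be $d$-dimensional (take $X=F$ with one class equal to $[0,1]$ and all other classes singletons; this $E$ is definably proper, and the saturation of the point $0$ is the $1$-dimensional set $[0,1]$). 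Closedness and boundedness of the classes, which you invoke, does not prevent this. So if any class meeting your bad locus has dimension $d$, the saturated set $B$ has dimension $d$ and your induction does not terminate. Nothing in the paper's toolkit (Lemmas \ref{lem:dim}, \ref{lem:dim2}, \ref{lem:openInX}) gives a dimension bound for saturations.

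The paper sidesteps this by never applying the inductive hypothesis to a saturated subset of $X$. It sets $B=\sigma(\mycl_X(S)\setminus S)$, uses Lemma \ref{lem:openInX} to find $S_d\subseteq S$ open in $X$ with $\dim(S\setminus S_d)<d$ and $S_d\cap B=\emptyset$, and applies the induction to $S'=\mycl_X(S)\setminus S_d$, a subset of the \emph{closure of the section}, whose dimension is $<d$ by Lemmas \ref{lem:dim} and \ref{lem:dim2} alone --- no saturation is taken, so no dimension inflation can occur. The quotient of the full space is then recovered as $g\circ\sigma$, where $g:\mycl_X(S)\to Y$ is obtained by gluing $\mycl_X(S_d)$ (which needs no further quotienting, since $\sigma$ is injective on classes meeting $S_d$) to the inductively constructed $Y'=S'/E'$ along $A=\mycl_X(S_d)\cap S'$; continuity and properness of $g\circ\sigma$ are then checked by the curve criteria, exactly as you anticipate. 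Your gluing step has a related secondary weakness: attaching along ``$\overline{Q_U}$'' presupposes that the quotient $Q_U$ is already realized as a definable set and that its frontier maps definably properly to $Q_B$, which is precisely what the paper's concrete choice of $\mycl_X(S_d)$ and $A$ delivers for free. To repair your argument you would essentially have to move the dimension induction from $X$ onto $\mycl_X(S)$, which is the paper's proof.
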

\begin{proof}
	Let $F^n$ be the ambient space of $X$.
	Apply Proposition \ref{prop:definable_choice} to the definable equivalence relation $E$.
	There exists a definable subset $S$ of $X$ such that $S$ intersects at exactly one point with each equivalence class of $E$. 
	Let $\sigma:X \to S$ be the definable map defined by assigning $x$ to the unique point in $S$ to which it is equivalent.
	Note that the definable map $\sigma$ is uniquely determined and the identity map on $S$.
	
	We prove the theorem by induction on $d=\dim X$.
	If $d= 0$, then $X$ is discrete and closed by Lemma \ref{lem:dim0}.
	Since $X$ is discrete and closed, any definable map on $X$ is continuous.
	The map $\sigma$ is also continuous.
	Since $E$ is definably proper over $X$, it is immediate that $\sigma$ is a definably proper map.
	We have demonstrated that $\sigma$ is a definable proper quotient of $X$ by $E$.
	The case in which $d=0$ is completed.
	
	Assume that $d>0$.
	Put $B=\sigma (\mycl_X(S) \setminus S)$.
	We have $\dim B<d$ by Lemma \ref{lem:dim} and Lemma \ref{lem:dim2}.
	When $\dim S=d$, we can take a definable subset $S_d$ of $S$ such that $\dim S \setminus S_d<d$, $S_d \cap  B=\emptyset$ and $S_d$ is open in $X$ by Lemma \ref{lem:openInX}.
	When $\dim S<d$, the empty set $S_d=\emptyset$ satisfies the above conditions. 
	Put $S':=\mycl_X(S) \setminus S_d$.
	Since $B \cap S_d$ is an empty set, no point of $S_d$ is equivalent to a point of $S'$.
	Moreover, $S'$ is closed in $X$.
	From these facts, we have two consequences.
	Firstly, the definable equivalence relation $E':=E \cap (S' \times S')$ is definably proper over $S'$ because $E$ is definably proper over $X$.
	Secondly, $S'$ is locally closed in $F^n$ because $X$ is locally closed in $F^n$. 
	Apply the induction hypothesis to the definable equivalence relation $E'$ on $S'$.
	There exists a definable proper quotient $f':S' \to Y'$ of $S'$ by $E'$.
	Note that $f'$ maps $S \setminus S_d$ bijectively onto $Y'$.
	The injectivity follows from the definition of $S$ and the surjectivity follows from the equality $B \cap S_d=\emptyset$.
	Put $A:=\mycl_X(S_d) \cap S'$.
	Note that $A$ is closed in $\mycl_X(S_d)$, $\mycl_X(S_d)$ is locally closed in $F^n$ and the restriction $f'':=f'|_{A}:A \to Y'$ is definably proper.
	We can apply Proposition \ref{prop:attach} to obtain a definable proper quotient $p:\mycl_X(S_d) \amalg Y' \to \mycl_X(S_d) \amalg_{f''} Y'$.
	Set $Y=\mycl_X(S_d) \amalg_{f''} Y'$.
	
	Note that the composed map $p \circ f':S' \to Y' \to Y$ agrees with the restriction $p|_{\mycl_X(S_d)}:\mycl_X(S_d) \to Y$ on the intersection $A$ of their domains.
	Hence these two maps determine a definable continuous map $g:\mycl_X(S)=\mycl_X(S_d) \cup S' \to Y$.	
	Consider the following commutative diagram:
	
	\[
	\begin{CD}
		\mycl_X(S_d) \amalg S' @>{j}>> \mycl_X(S_d) \amalg Y' \\
		@V{h}VV    @VV{p}V \\
		\mycl_X(S)   @>{g}>>  Y=\mycl_X(S_d) \amalg_{f''}Y'
	\end{CD}
	\]
	Here, the map $j$ is the identity on $\mycl_X(S_d)$ and equal to $f'$ on $S'$, 
	and the map $h$ is induced by the inclusion maps $\mycl_X(S_d) \to \mycl_X(S)$ and $S' \to \mycl_X(S)$.
	All four maps are surjective.
	Since $f'$ is definably proper, so are $p$ and $j$.
	Since $f'$ maps $S\setminus S_d$ bijectively onto $Y'$,
	$g|_S:S \to Y$ is a bijection.
	Put $f:=g \circ \sigma:X \to Y$.
	Then $f$ is definable and surjective.
	It is also obvious that $(x_1,x_2) \in E$ if and only if $f(x_1)=f(x_2)$ because $g|_S:S \to Y$ is a bijection.
	
	We want to show that $f:X \to Y$ is a definable proper quotient of $X$ by $E$.
	The remaining task is to demonstrate that $f:X \to Y$ is continuous and definably proper.
	We first prove that $f$ is continuous.
	Take an arbitrary point $p \in X$ and an arbitrary definable curve $\alpha:(0,\varepsilon) \to X$ with $\alpha \to p$.
	We have only to prove that $f \circ \alpha \to f(p)$ by Lemma \ref{lem:cont}.
	Consider the definable map $\gamma:=(\alpha, \sigma \circ \alpha): (0,\varepsilon) \to E$.
	We may assume that $\gamma$ is continuous by Proposition \ref{prop:mono} by taking a smaller $\varepsilon>0$.
	It is a definable curve in $E$ completable in $E$, say $\gamma \to (p,q) \in E$, by Lemma \ref{lem:complete}.
	In particular, we have $f(p)=f(q)$.
	Hence $\sigma \circ \alpha$ is also completable in $X$ and $\sigma \circ \alpha \to q \in \mycl_X(S)$.
	We get $f \circ \alpha = g \circ \sigma \circ \alpha \to g(q)$ by Lemma \ref{lem:cont} because $g$ is continuous.
	Since $f(p)=f(q)$, we have only to demonstrate that $g(q)=f(q)$.
	We consider two separate cases.
	\begin{enumerate}
		\item[(1)] If $q \in S$, we have $f(q)=g(q)$ because $\sigma$ is the identity on $S$.
		\item[(2)] If $ q \in \mycl_X(S) \setminus S$, we have $\sigma(q) \in S \setminus S_d$.
		Both $q$ and $\sigma(q)$ belong to $S'$.
		We get $f'(q)=f'(\sigma(q))$ because $q$ and $\sigma(q)$ are equivalent in $E'$.
		It implies that $f(q)=g(q)$.
	\end{enumerate}
	We next demonstrate that $f$ is definably proper.
	Take a definable curve $\alpha$ in $X$ such that $f \circ \alpha= g \circ \sigma  \circ \alpha$ is completable in $Y$.
	We have only to prove that $\alpha$ is completable in $X$ by Lemma \ref{lem:proper_eq}.
	Since $g$ is definably proper, $\sigma  \circ \alpha$ is completable in $\mycl_X(S)$.
	By Lemma \ref{lem:complete}, $\alpha$ is completable in $X$.
	We have finished the proof.
\end{proof}

\section{Definable quotients}\label{sec:def_quo}
We considered the case in which the definable equivalence relation is definably proper in the previous section.
This assumption seems to be too restrictive.
We relax this assumption following  Scheiderer's strategy used in his work \cite{Sch}.
We first prove the following key lemma:

\begin{lemma}\label{lem:basic}
Consider a definably complete locally o-minimal expansion of an ordered field $\mathcal F=(F,<,+,\cdot,0,1,\ldots)$.
Let $X \subseteq F^m$ and $Y \subseteq F^n$ be definable sets.
Let $Z$ be a definable subset of $Y \times X$ satisfying the  conditions (1) through (5) described below.
Here, $p_1:Z \to Y$ and $p_2:Z\to X$  denotes the restrictions of canonical projections to $Z$.
\begin{enumerate}
	\item[(1)] The map $p_1$ is surjective.
	\item[(2)] For any $y_1,y_2 \in Y$, we have either $p_2(p_1^{-1}(y_1))  = p_2(p_1^{-1}(y_2))$ or $p_2(p_1^{-1}(y_1))  \cap p_2(p_1^{-1}(y_2))=\emptyset$.
	\item[(3)] The definable set $X$ is closed in $F^m$ and $Z$ is closed in $Y \times X$.
	 \item[(4)] For any definable curve $\gamma:(0,\varepsilon) \to Y$ completable in $Y$, there exists a definable curve $\alpha:(0,\delta) \to Z$ completable in $Z$ such that $0<\delta \leq \varepsilon$ and  $p_1 \circ \alpha = \gamma|_{(0,\delta)}$.
	 \item[(5)] Let $\beta:(0,\varepsilon) \to X$ be a definable curve completable in $X$.
	 Set $x=\lim_{t \to 0}\beta(t) \in X$. 
	 Let $y \in Y$ be a point with $(y,x) \in Z$.
	 Then there exists a definable curve $\alpha:(0,\delta) \to Z$ completable in $Z$ such that $0<\delta \leq \varepsilon$, $p_2 \circ \alpha = \beta|_{(0,\delta)}$ and $\lim_{t \to 0}\alpha(t)=(y,x)$.
\end{enumerate}
Then there exists a definable closed subset $K$ of $X$ such that the map $p_1|_{p_2^{-1}(K)}$ is surjective and definably proper.
\end{lemma}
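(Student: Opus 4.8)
The plan is to single out, inside each orbit $O_y := p_2(p_1^{-1}(y))$, the points of least norm, to let $K$ be the closure of the resulting section, and to use the two curve-lifting hypotheses to control this closure.

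First I would record the basic geometry. For $y \in Y$ the fibre $p_1^{-1}(y) = Z \cap (\{y\} \times X)$ is closed in $\{y\} \times X$ because $Z$ is closed in $Y \times X$ by condition (3), and $p_2$ restricts to a homeomorphism of $\{y\} \times X$ onto $X$; hence $O_y$ is closed in $X$, and therefore in $F^m$ since $X$ is closed. Consequently $\mu(y) := \inf\{|x| : x \in O_y\}$ is finite (the orbit is nonempty by surjectivity of $p_1$) and attained: for $c > \mu(y)$ the set $\{x \in O_y : |x| \le c\}$ is closed and bounded, so $|\cdot|$ attains its minimum there by Lemma \ref{lem:image}. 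By condition (2) the value $\mu(y)$ depends only on the orbit of any $x \in O_y$, so it defines a function $\nu(x)$ on $p_2(Z)$, and I would set $K_0 := \{x \in p_2(Z) : |x| = \nu(x)\}$, the set of least-norm points. By construction $K_0$ meets every orbit, so $p_1|_{p_2^{-1}(K_0)}$ is already surjective, and a fortiori so is $p_1$ restricted to $p_2^{-1}$ of any closed set containing $K_0$.

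Next I would establish two boundedness facts about $\mu$. Using condition (4): $\mu$ is locally bounded on $Y$, for otherwise there is a definable curve $\gamma \to y_0$ in $Y$ with $\mu(\gamma(t)) \to \infty$ (produced as in Lemmas \ref{lem:cont} and \ref{lem:local_bound}); lifting $\gamma$ through $p_1$ by condition (4) to a curve completable in $Z$ with limit $(y_0,x_0)$ yields $\mu(\gamma(t)) \le |p_2(\text{lift}(t))|$, whose limit is $|x_0| < \infty$, a contradiction. From local boundedness I would deduce, for any closed bounded $C \subseteq Y$, that $\mu$ is bounded by some $M$ on a neighbourhood of $C$ in $Y$: if not, Proposition \ref{prop:definable_choice} produces a definable curve $y(t)$ in $Y$ whose distance to $C$ tends to $0$ while $\mu(y(t)) \to \infty$, and choosing nearest points $c(t) \in C$ (completable by Proposition \ref{prop:mono2}, as $C$ is closed and bounded) forces $y(t) \to c_0 \in C \subseteq Y$, contradicting local boundedness at $c_0$.

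I would then take $K := \mycl_X(K_0)$ and verify it works. The key step, and the place where condition (5) is indispensable, is that the set $K \cap \{x : (y,x) \in Z \text{ for some } y \in C\}$ is bounded for every closed bounded $C \subseteq Y$. For a least-norm point $x$ of an orbit $O_y$ with $y \in C$ this is clear, since $|x| = \mu(y) \le M$. For a point $x \in K \setminus K_0$ of such an orbit, Proposition \ref{prop:curve_selection} gives a definable curve $\beta \to x$ inside $K_0$; applying condition (5) to $\beta$ and a point $y \in C$ with $(y,x) \in Z$ produces a lift whose $p_1$-image $\theta(t)$ tends to $y$, and since each $\beta(t)$ is a least-norm point of $O_{\theta(t)}$ we get $|\beta(t)| = \mu(\theta(t)) \le M$ for small $t$, whence $|x| \le M$. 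With this boundedness in hand, properness follows from Lemma \ref{lem:proper_eq}: given a definable curve $(\gamma,\beta)$ in $p_2^{-1}(K)$ with $\gamma$ completable in $Y$, the image $C := \gamma([0,\varepsilon/2])$ is closed and bounded by Lemma \ref{lem:image}, so $\beta$ lies in the bounded set just described and is completable in $X$ by Proposition \ref{prop:mono2}; since $Z$ and $K$ are closed, the limit lies in $p_2^{-1}(K)$. Thus $p_1|_{p_2^{-1}(K)}$ is surjective and definably proper.

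I expect the main obstacle to be precisely the boundedness of $\mycl_X(K_0)$ over bounded subsets of $Y$: a priori the closure could acquire least-norm points migrating in from orbits whose indices run off to infinity, and ruling this out is exactly what condition (5) accomplishes, by forcing the orbit-indices of any approximating curve to converge in $Y$. The preliminary upgrade from ``$\mu$ bounded on $C$'' to ``$\mu$ bounded on a neighbourhood of $C$'' is the other delicate point, made necessary by the fact that $Y$ is not assumed to be closed.
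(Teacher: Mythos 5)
Your proposal is correct and follows essentially the same route as the paper's proof: your $K_0$ is the paper's minimal-norm section $L$, your $K=\mycl(K_0)$ is the paper's $K=\mycl(L)$, and you invoke conditions (4) and (5) at exactly the same two junctures (condition (4) to keep the minimal-norm function from blowing up near points of $Y$, condition (5) to bound points of $K\setminus K_0$ via curves inside $K_0$). The only difference is bookkeeping: where the paper packages the neighbourhood information into the auxiliary limsup function $\rho$ and the pointwise bound $d_m(x)\le 3\rho(y)$, you work with $\mu$ directly and upgrade its local boundedness to a uniform bound on a neighbourhood of a closed bounded subset of $Y$.
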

\begin{proof}
	In the proof, we use different notations to clarify the ambient space containing the given point.
	Set $d_m(x)=\max_{1 \leq i \leq m}|x_i|$ and $d_m(x,y)=\max_{1 \leq i \leq m}|x_i-y_i|$ for $x=(x_1,\ldots,x_m)$ and $y=(y_1,\ldots, y_m)$.
	We define $d_n$ and $d_{m+n}$ in the same manner.
	Replacing $X$ with $X \times \{1\} \subseteq F^{m+1}$, we may assume that $d_m(x) \geq 1$ for any $x \in X$.
	Consider the definable set
	$$L(y) :=\{x \in X\;|\; (y,x) \in Z,\ d_m(x) \leq d_m(x') \text{ for any }x' \in X\text{ with }(y,x') \in Z\}$$ for any $y \in Y$ and set $L:=\bigcup_{y \in Y} L(y)$.
	\medskip
	
	\textbf{Claim 1.}
	The restriction $p_1|_{p_2^{-1}(L)}$ is surjective.
	\begin{proof}[Proof of Claim 1.]
		Let $y \in Y$ be an arbitrary point.
		The definable set $p_1^{-1}(y)$ is a nonempty closed set by the assumptions (1) and (3).
		Take a point $x' \in p_1^{-1}(y)$. 
		The set $\{x \in p_1^{-1}(y)\;|\; d_m(x) \leq d_m(x')\}$ is definable, closed and bounded.
		The restriction of $d_m$ to this set attains its minimum at a point $c \in X$ by Lemma \ref{lem:image}.
		It is obvious that $c \in L(y)$.
		We get $(y,c) \in p_2^{-1}(L)$ and $p_1(y,c)=y$.
		It means that $p_1|_{p_2^{-1}(L)}$ is surjective.
	\end{proof}

\textbf{Claim 2.}
We have either $L(y_1)=L(y_2)$ or $L(y_1) \cap L(y_2)=\emptyset$ for any $y_1,y_2 \in Y$.
\begin{proof}[Proof of Claim 2.]
	The claim follows immediately from the assumption (2).
\end{proof}
	
	We next define the definable function $\rho:Y \to F$ by $$\rho(y)=\inf_{\varepsilon>0}\sup\{d_m(x)\;|\exists y' \in Y\ \;x \in L(y') \text{ and } d_n(y,y') < \varepsilon\}.$$
	For the sake of well-definedness of the map $\rho$, we need to prove that $\rho(y)<\infty$.
	\medskip
	
	\textbf{Claim 3.} $\rho(y)<\infty$ for each $y \in Y$.
	\begin{proof}[Proof of Claim 3.]
		Assume for contradiction that $\rho(y)=\infty$.
		It means that, for any $\varepsilon>0$, there exist $y' \in Y$ and $x \in L(y')$ such that $d_n(y,y') < \varepsilon$ and $d_m(x) > 1/\varepsilon$.
		By Proposition \ref{prop:definable_choice}, we can construct a definable map $\eta:(0,\delta) \to Z \cap p_2^{-1}(L)$ such that $d_n(y,p_1(\eta(t)))<t$ and $d_m(p_2(\eta(t)))>1/t$.
		By Proposition \ref{prop:mono}, we may assume that $\eta$ is continuous taking a smaller $\delta>0$ if necessary.
		It is obvious that $\lim_{t \to 0}p_1(\gamma(t))=y$.
		In particular, $p_1 \circ \gamma$ is a definable curve completable in $Y$.
		By the assumption (4), there exists a definable curve $\alpha:(0,\delta) \to Z$ completable in $Z$ such that $p_1 \circ \alpha = p_1 \circ \eta$ taking smaller $\delta>0$ if necessary.
		By the definition of the set $L$, we have $d_m(p_2(\alpha(t))) \geq d_m(p_2(\eta(t)))$.
		We have $d_m(p_2(\alpha(t))) \to \infty$ as $ t \to 0$ because $d_m(p_2(\eta(t))) \to \infty$ as $ t \to 0$.
		It contradicts the fact that $\alpha$ is completable in $Z$.
	\end{proof}
	
	By Claim 3, $\rho$ is a well-defined definable function on $Y$.
	Since $d_m(x) \geq 1$ for all $x \in X$, we have $\rho(y)>0$ for each $y \in Y$.
	We next show that $\rho$ is locally bounded.
	\medskip
	
	\textbf{Claim 4.} The function $\rho$ is locally bounded.
	\begin{proof}[Proof of Claim 4.]
		Consider the map $\tau:Y \to F$ given by $$\tau(y)=\inf_{\delta>0}\sup\{\rho(y')\;|\; y' \in Y, d_n(y,y')<\delta\}.$$
		If $\tau$ is a well-defined map, for any $y \in Y$, there exist $\varepsilon>0$ and $\delta>0$ such that $\sup\{\rho(y')\;|\; y' \in Y, d_n(y,y')<\delta\}<\tau(y)+\varepsilon$.
		It implies that $\rho$ is locally bounded.
		Therefore, we have only to show that $\tau(y)<\infty$ for each $y \in Y$.
		
		Assume for contradiction that $\tau(y)=\infty$ for some $y \in Y$.
		For any $\varepsilon>0$, there exists $y' \in Y$ such that $d_n(y,y') <\varepsilon$ and $\rho(y')>1/\varepsilon$.
		Using Proposition \ref{prop:definable_choice}, we construct a definable map $\eta:(0,\delta) \to Y$ such that $d_n(y,\eta(t))<t$ and $\rho(\eta(t))>1/t$ for all $0<t<\delta$.
		Using Proposition \ref{prop:definable_choice} again together with the definition of the function $\rho$, we get a definable map $h:(0,\delta) \to Z \cap p_2^{-1}(L)$ such that $d_n(\eta(t),p_1(h(t)))<t$ and $d_m(p_2(h(t)))>1/t$.
		Choosing a smaller $\delta>0$ if necessary, we may assume that all the above maps are continuous by Proposition \ref{prop:mono}.
		We have $\lim_{t \to 0}p_1(h(t))=y$ and it means that the definable curve $p_1\circ h$ is completable in $Y$.
		By the assumption (4), there exists a definable curve $\alpha:(0,\delta) \to Z$ completable in $Z$ such that $p_1 \circ \alpha = p_1 \circ h$ by choosing smaller $\delta>0$ if necessary.
		By the definition of $L$ together with the equality $p_1 \circ \alpha = p_1 \circ h$, we have $d_m(p_2(\alpha(t))) \geq d_m(p_2(h(t)))$ for all $0<t<\delta$.
		We get $d_m(p_2(\alpha(t))) \to \infty$ as $t \to \infty$ because $d_m(p_2(h(t))) \to \infty$ as $t \to \infty$.
		It contradicts the fact that $\alpha$ is completable. 
	\end{proof}
	We set $K=\mycl(L)$.
	We demonstrate that $K$ satisfies the condition of the lemma.
	It is obvious that the restriction $p_1|_{p_2^{-1}(K)}$ is surjective because so is $p_1|_{p_2^{-1}(L)}$ by Claim 1.
	We have only to demonstrate that  $p_1|_{p_2^{-1}(K)}$ is definably proper.
	For that purpose, we want to show that $d_m(x) \leq 3\rho(y)$ whenever $(y,x) \in p_1^{-1}(K)$.
	We first consider the case in which $x \in L$, namely $(y,x) \in p_1^{-1}(L)$.
	We have $x \in L(y)$ by Claim 2.
	The inequality $d_m(x) \leq 3\rho(y)$ is obvious by the definitions of $L(y)$ and $\rho$.
	
	We consider the remaining case.
	Fix an arbitrary $(\widetilde{y},\widetilde{x}) \in Z \cap p_1^{-1}(K \setminus L)$.
	We prove that $d_m(\widetilde{x}) \leq 3\rho(\widetilde{y})$.
	Fix a sufficiently small $\varepsilon'>0$ with $\varepsilon'<\rho(\widetilde{y})$.
	The definition of $\rho$ asserts that there exists $\varepsilon>0$ such that $$(*):\ \sup\{d_m(x)\;|\; \exists y \in Y, \ x \in L(y) \text{ and } d_n(\widetilde{y},y)<\varepsilon\}<\rho(\widetilde{y})+\varepsilon'$$
	By Proposition \ref{prop:curve_selection}, there exists a definable curve $\beta:(0,c) \to L$ with $\widetilde{x}=\lim_{t \to 0}\beta(t)$.
	The assumption (5) asserts that there exists a definable curve $\alpha:(0,c) \to Z$ such that $p_1 \circ \alpha = \beta$ and $\lim_{t \to 0}\alpha(t)=(\widetilde{y},\widetilde{x})$ by taking a smaller $c>0$ if necessary.
	We can take $\delta>0$ satisfying the inequality  $d_{m+n}(\alpha(t),(\widetilde{y},\widetilde{x}))<\varepsilon$ whenever $0<t<\delta$ because $\lim_{t \to 0}\alpha(t)=(\widetilde{y},\widetilde{x})$.
	In particular, we get $d_n(p_1(\alpha(t)),\widetilde{y})<\varepsilon$ for $0<t<\delta$.
	Using the inequality $(*)$, we get $$d_m(\beta(t))<\rho(\widetilde{y})+\varepsilon'$$ for $0<t<\delta$ because $\beta(t)\in L(p_1(\alpha(t)))$.
	Since $\lim_{t \to 0}\beta(t)=\widetilde{x}$, choosing a smaller $\delta>0$, we may assume that $$d_m(\widetilde{x},\beta(t))<\rho(\widetilde{y})$$ when $0<t<\delta$ because $\beta$ is continuous.
	Fix $t \in F$ with $0<t<\delta$.
	We have $d_m(\widetilde{x}) \leq d_m(\beta(t))+d_m(\beta(t),\widetilde{x})<2\rho(\widetilde{y})+\varepsilon'< 3\rho(\widetilde{y})$.
	We have demonstrated the inequality $d_m(\widetilde{x}) \leq 3\rho(\widetilde{y})$.
	
	Let $C$ be a closed bounded definable subset of $F^n$ contained in $Y$.
	Since $\rho|_C$ is locally bounded by Claim 4, it is bounded by a positive element $N$ in $F$ by Lemma \ref{lem:local_bound}.
	It implies that $d_m(x) \leq 3N$ when $(y,x) \in p_1^{-1}(C) \cap p_2^{-1}(K)$.
	We have $(p_1|_{p_2^{-1}(K)})^{-1}(C)=p_1^{-1}(C) \cap p_2^{-1}(K) \subseteq C \times [-3N,3N]^m$.
	It implies that the closed set $(p_1|_{p_2^{-1}(K)})^{-1}(C)$ is bounded.
	Since $K$ is closed in $X$ and $Z$ is closed in $Y \times X$ by the assumption (3), $(p_1|_{p_2^{-1}(K)})^{-1}(C)=p_1^{-1}(C) \cap p_2^{-1}(K)=(C \times K) \cap Z$ is also closed in $C \times K$.
	Since $C \times K$ is closed in $F^{m+n}$, $(p_1|_{p_2^{-1}(K)})^{-1}(C)$ is also closed in $F^{m+n}$.
	It means that $p_1|_{p_2^{-1}(K)}$ is definably proper.
\end{proof}

Using the above key lemma, we obtain the equivalence condition for a definable map to be definably identifying.
\begin{lemma}\label{lem:identifying}
	Consider a definably complete locally o-minimal expansion of an ordered field $\mathcal F=(F,<,+,\cdot,0,1,\ldots)$.
	Let $X \subset F^m$ and $Y \subset F^n$ be definable sets and $f:X \to Y$ be a definable continuous map.
	We further assume that $X$ is closed in $F^m$.
	The following are equivalent:
	\begin{enumerate}
		\item[(i)] The map $f$ is definably identifying.
		\item[(ii)] There exists a definable closed subset $K$ of $X$ such that the restriction $f|_K$ of $f$ to $K$ is surjective and definably proper.
	\end{enumerate}
\end{lemma}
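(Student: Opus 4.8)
The plan is to read off both implications from the key Lemma~\ref{lem:basic} together with the curve-lifting characterisation of definably identifying maps in Lemma~\ref{lem:identifying_eq}, choosing for the auxiliary set $Z$ the graph of $f$.

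I would dispose of the direction (ii)$\Rightarrow$(i) first, since it is purely formal. Given a definable closed $K\subseteq X$ with $f|_K$ surjective and definably proper, Corollary~\ref{cor:proper_identifying} shows that $f|_K$ is definably identifying. Writing $f|_K=f\circ\iota$ for the inclusion $\iota\colon K\hookrightarrow X$, Lemma~\ref{lem:identifying_basic}(2) then yields that $f$ itself is definably identifying. (Note this half does not even use that $X$ is closed.)

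For (i)$\Rightarrow$(ii), I would apply Lemma~\ref{lem:basic} to the graph relation $Z=\{(y,x)\in Y\times X \mid y=f(x)\}$, for which $p_2\colon Z\to X$ is a definable homeomorphism with inverse $x\mapsto(f(x),x)$, and $p_1|_Z$ is just $f$ viewed through this homeomorphism. Verifying the five hypotheses is then essentially bookkeeping: condition (1) is surjectivity of $f$; condition (2) holds because $p_2(p_1^{-1}(y))=f^{-1}(y)$ and distinct fibres of $f$ are disjoint; condition (3) holds because $X$ is closed by assumption and $Z$, being the graph of a continuous map into the Hausdorff space $F^n$, is closed in $Y\times X$; and condition (5) is immediate by taking $\alpha(t)=(f(\beta(t)),\beta(t))$ and invoking continuity of $f$, so that $\alpha\to(f(x),x)=(y,x)$. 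The one substantive point is condition (4), which is exactly the statement of Lemma~\ref{lem:identifying_eq}(ii): since $f$ is definably identifying, any $\gamma$ completable in $Y$ admits a completable lift $\widetilde\alpha$ into $X$ with $f\circ\widetilde\alpha=\gamma$, and then $\alpha(t):=(\gamma(t),\widetilde\alpha(t))$ is completable in $Z$ with $p_1\circ\alpha=\gamma$.

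Lemma~\ref{lem:basic} then produces a definable closed subset $K$ of $X$ for which $p_1|_{p_2^{-1}(K)}$ is surjective and definably proper, and it remains to transport this back to $f|_K$. Here $p_2$ restricts to a definable homeomorphism $\phi\colon p_2^{-1}(K)\to K$, and $p_1|_{p_2^{-1}(K)}=(f|_K)\circ\phi$; surjectivity of the left-hand map forces surjectivity of $f|_K$, while the identity $f|_K=(p_1|_{p_2^{-1}(K)})\circ\phi^{-1}$ exhibits $f|_K$ as a composition of definably proper maps (using that $\phi^{-1}$ is definably proper by Lemma~\ref{lem:via_homeo}(i)), hence definably proper by Lemma~\ref{lem:via_homeo}(ii). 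I expect the only genuine difficulty to lie in Lemma~\ref{lem:basic} itself, which is already available to me; with the graph relation chosen, the checks of (1)--(5) and the final transport along $\phi$ are routine, the main care being the identification of $p_1|_{p_2^{-1}(K)}$ with $f|_K$ through the homeomorphism $p_2$.
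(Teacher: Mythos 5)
Your proposal is correct and follows essentially the same route as the paper: both directions hinge on applying Lemma~\ref{lem:basic} to the permuted graph $Z=\{(f(x),x)\}$ with the same verification of conditions (1)--(5) via Lemma~\ref{lem:identifying_eq}. The only (harmless) divergences are that you derive (ii)$\Rightarrow$(i) from Corollary~\ref{cor:proper_identifying} and Lemma~\ref{lem:identifying_basic}(2) rather than by a direct closed-set argument, and you transport properness back to $f|_K$ through the homeomorphism $p_2|_{p_2^{-1}(K)}$ and Lemma~\ref{lem:via_homeo} instead of via Lemma~\ref{lem:image}.
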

\begin{proof}
	We first prove that (ii) implies (i).
	Let $C$ be a definable subset of $Y$ such that $f^{-1}(C)$ is closed in $X$.
	Since $f|_K$ is definably proper, $f^{-1}(C) \cap K=(f|_K)^{-1}(C)$ is closed and bounded in $F^m$.
	Since $f|_K$ is surjective, we have $C=f(f^{-1}(C) \cap K)$.
	Finally, $C=f(f^{-1}(C) \cap K)$ is closed and bounded by Lemma \ref{lem:image}.
	
	We next consider the opposite implication.
	We want to apply Lemma \ref{lem:basic} to the definable set $Z=\{(f(x),x) \in Y \times X\}$.
	Let $p_1,p_2$ be the same as in Lemma \ref{lem:basic}.
	We check that the conditions (1) through (5) in Lemma \ref{lem:basic} are all satisfied.
	The condition (1) immediately follows because $f$ is surjective.
	The condition (2) is obviously satisfied because $p_2(p_1^{-1}(y))=f^{-1}(y)$ in this case.
	The condition (3) holds true because $f$ is a continuous map defined on a closed set and $Z$ is the permuted graph of $f$.
	The condition (4) follows from the assumption that $f$ is definably identifying and Lemma \ref{lem:identifying_eq}.
	The condition (5) is obvious from the fact that $Z$ is the permuted image of the graph of a continuous map.
	The definable map $p_1|_{p_2^{-1}(K)}$ is surjective and definably identifying for some closed definable subset $K$ of $X$ by Lemma \ref{lem:basic}.
	The image of $f|_K$ coincides with that of $p_1|_{p_2^{-1}(K)}$, and $f|_K$ is surjective.
	Let $C$ be a definable bounded closed subset of $F^n$ contained in $Y$.
	Since $p_1|_{p_2^{-1}(K)}$ is definably proper, $(p_1|_{p_2^{-1}(K)})^{-1}(C)$ is  bounded and closed in $F^{m+n}$.
	The inverse image $(f|_K)^{-1}(C)= p_2((p_1|_{p_2^{-1}(K)})^{-1}(C))$ is also bounded and closed in $F^m$ by Lemma \ref{lem:image}.
	It means that $f|_K$ is definably proper.
\end{proof}

%

%
%
%

\begin{lemma}\label{lem:geo_quo}
	Consider a definably complete locally o-minimal expansion of an ordered field $\mathcal F=(F,<,+,\cdot,0,1,\ldots)$.
	Let $X$ be a definable closed set and $E$ be a definable equivalence relation on $X$ which is closed in $X \times X$.
	Let $K$ be a definable closed subset of $X$ and set $E_K=E \cap (K \times K)$.
	Let $p_i:E \to X$ be the restriction of the canonical projection $X \times X \to X$ onto the $i$-th factor to $E$ for $i=1,2$.
	Assume that the restriction $p_1|_{p_2^{-1}(K)}$ is definably identifying.
	There exists a definable quotient of $X$ by $E$ if and only if a definable quotient of $K$ by $E_K$ exists.
	Furthermore, if $f:X \to Y$ is a definable quotient of $X$ by $E$, the restriction $f|_K:X \to Y$ is also a definable quotient of $K$ by $E_K$.
\end{lemma}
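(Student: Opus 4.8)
The plan is to prove the two implications separately, deriving the forward direction from the stronger ``furthermore'' assertion. Write $q:=p_1|_{p_2^{-1}(K)}\colon p_2^{-1}(K)\to X$ for the map in the hypothesis; it sends $(x_1,x_2)\in E$ with $x_2\in K$ to $x_1$. I would extract two consequences of $q$ being definably identifying and use them throughout. Surjectivity of $q$ says that every $E$-class meets $K$. Feeding $q$ into the curve criterion Lemma~\ref{lem:identifying_eq} gives a \emph{lifting principle}: for every definable curve $\delta$ completable in $X$ there is a definable curve $\mu$ completable in $p_2^{-1}(K)$ with $q\circ\mu=\delta$ on a possibly smaller interval; writing $\psi:=p_2\circ\mu$, this produces a definable curve $\psi$ in $K$, completable in $K$, with $(\delta(t),\psi(t))\in E$ for all $t$. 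In short, any curve in $X$ is shadowed by an $E$-equivalent curve lying in $K$ that still converges.

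For the forward implication I verify the ``furthermore'' statement: if $f\colon X\to Y$ is a definable quotient of $X$ by $E$, then $f|_K\colon K\to Y$ is a definable quotient of $K$ by $E_K$. Since every $E$-class meets $K$, we get $f(K)=f(X)=Y$, so $f|_K$ is surjective; and for $x,x'\in K$, $f(x)=f(x')\iff(x,x')\in E\iff(x,x')\in E_K$, so the fibres of $f|_K$ are exactly the $E_K$-classes. The only substantive point is that $f|_K$ is definably identifying, which I check through Lemma~\ref{lem:identifying_eq}: given a definable curve $\beta$ completable in $Y$, definable identifiability of $f$ yields a curve $\delta$ completable in $X$ with $f\circ\delta=\beta$, and the lifting principle replaces $\delta$ by a curve $\psi$ in $K$, completable in $K$, with $(\delta(t),\psi(t))\in E$; then $f(\psi(t))=f(\delta(t))=\beta(t)$, so $\beta$ lifts along $f|_K$.

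For the converse, assume a definable quotient $g\colon K\to W$ of $K$ by $E_K$ exists. I would define $f\colon X\to W$ by $f(x)=g(x')$, where $x'\in K$ is any point with $(x,x')\in E$ (such $x'$ exists because every $E$-class meets $K$). This is well defined, since two such representatives are $E_K$-equivalent and $g$ is constant on $E_K$-classes, and it is definable because its graph is a definable set; alternatively one composes $g$ with a definable choice of representative from Proposition~\ref{prop:definable_choice}. By construction $f|_K=g$, the equivalence $f(x)=f(x')\iff(x,x')\in E$ holds, and $f$ is surjective because $g$ is; it remains to show $f$ is continuous and definably identifying.

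The main obstacle is continuity of $f$, because the representative map implicit in its definition is generally discontinuous; I would circumvent this using the lifting principle and Lemma~\ref{lem:cont}. Fix $p\in X$ and a definable curve $\gamma\to p$. Lifting $\gamma$ produces $\psi$ in $K$, completable in $K$ with $\psi\to q_0$, satisfying $(\gamma(t),\psi(t))\in E$; the accompanying $\mu$ is completable in $p_2^{-1}(K)\subseteq E$, so its limit $(p,q_0)$ lies in $E$ with $q_0\in K$. Since $\gamma(t)$ and $\psi(t)\in K$ are $E$-equivalent, $f\circ\gamma=g\circ\psi$, and continuity of $g$ gives $g\circ\psi\to g(q_0)$; as $p$ and $q_0$ are $E$-equivalent with $q_0\in K$, we have $f(p)=g(q_0)$, whence $f\circ\gamma\to f(p)$ and $f$ is continuous. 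That $f$ is definably identifying follows from Lemma~\ref{lem:identifying_eq}: any definable curve $\beta$ completable in $W$ lifts, via $g=f|_K$, to a curve in $K$ completable in $K$, hence completable in $X$, and lifting $\beta$ along $f$. Thus $f$ is the desired definable quotient of $X$ by $E$.
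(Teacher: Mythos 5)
Your proposal is correct and follows essentially the same route as the paper: the same definition of $f$ via a representative in $K$, the same curve-lifting argument through $p_1|_{p_2^{-1}(K)}$ combined with Lemma~\ref{lem:cont} for continuity, and the same use of Lemma~\ref{lem:identifying_eq}. The only cosmetic difference is that you verify the definably identifying property by checking the curve criterion directly, where the paper packages those checks into commutative diagrams and the composition lemma (Lemma~\ref{lem:identifying_basic}).
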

\begin{proof}
	Set $q_i=p_i|_{p_2^{-1}(K)}$.
	We first assume that there exists a definable quotient of $X$ by $E$.
	Let $f:X \to Y$ be the definable quotient.
	Consider the following commutative diagram:
	\[
	\begin{CD}
		p_2^{-1}(K) @>{q_2}>> K\\
		@V{q_1}VV @VV{f|_K}V\\
		X @>{f}>> Y
	\end{CD}
    \]
    Both $q_1$ and $f$ are definably identifying by the assumption.
    The restriction $f|_K$ is also definably identifying by Lemma \ref{lem:identifying_basic}.
    It implies that $f|_K$ is a definable quotient of $K$ by $E_K$.
    
    We next consider the case in which a definable quotient of $K$ by $E_K$ is given.
    Let $g:K \to Y$ be the definable quotient.
    Let $f:X \to Y$ be the definable map given by $f(x)=g(\widetilde{x})$, where $\widetilde{x}$ is an element in $K$ with $(x,\widetilde{x}) \in E$.
    Such $\widetilde{x}$ always exists because $q_1$ is surjective.
    In addition, $f$ is independent of the choice of $\widetilde{x}$ because $g$ is a definable quotient.
    We want to show that $f$ is continuous. 
    Let $\alpha:(0,\varepsilon) \to X$ be a definable curve with $p=\lim_{t \to 0} \alpha(t) \in X$.
    We have only to show that $f \circ \alpha \to f(p)$ by Lemma \ref{lem:cont}.
    Since $q_1$ is definably identifying, there exists a definable cure $\beta:(0,\varepsilon) \to E \cap (X \times K)$ such that $\alpha = p_1 \circ \beta$ and $\alpha$ is completable in $E \cap (X \times K)$ by Lemma \ref{lem:identifying_eq} by taking a smaller $\varepsilon>0$ if necessary.
    There exists $q \in K$ with $\beta \to (p,q) \in E$ because $\beta$ is completable in $E \cap (X \times K)$.
    By the definition of $f$ and $g$, we have $f \circ \alpha = f \circ p_1 \circ \beta = g \circ p_2 \circ \beta$ and $f(p)=g(q)$.
    Since $g \circ p_2$ is continuous, we have $g \circ p_2 \circ \beta \to g(p_2(p,q))=g(q)$  by Lemma \ref{lem:cont}.
    It implies that $f \circ \alpha \to f(p)$. 
    We have demonstrated that $f$ is continuous.
   
   As an intermediate step, we show that the map $q_2$ is definably identifying.
   Let $\gamma:(0,\varepsilon) \to K$ be a definable curve completable in $K$.
   Since $E$ is a definable equivalence relation, we have $(\gamma(t),\gamma(t)) \in E$ and this curve is obviously completable in $E_K$.
    It means that $q_2$ is definably identifying by Lemma \ref{lem:identifying_eq}.
    
    Consider the following commutative diagram:
    \[
    \begin{CD}
    	p_2^{-1}(K) @>{q_2}>> K\\
    	@V{q_1}VV @VV{g}V\\
    	X @>{f}>> Y
    \end{CD}
    \]
    This diagram implies that $f$ is definably identifying by Lemma \ref{lem:identifying_basic} because $g$ and $q_2$ are definably identifying.
\end{proof}

The following theorem is the main theorem of this section:

\begin{theorem}\label{thm:quotinet}
	Consider a definably complete locally o-minimal expansion of an ordered field $\mathcal F=(F,<,+,\cdot,0,1,\ldots)$.
	Let $X$ be a locally closed definable set and $E$ be a definable equivalence relation on $X$ which is closed in $X \times X$.
	The following are equivalent:
	\begin{enumerate}
		\item[(1)] There exists a definable quotient of $X$ by $E$.
		\item[(2)] Let $p_i$ be the restriction of the he canonical projection $X \times X \to X$  onto the $i$-th coordinate to $E$ for $i=1,2$.
		There exists a definable closed subset $K$ of $X$ such that the restriction $p_1|_{p_2^{-1}(K)}$ of $p_1$ to $p_2^{-1}(K)$ is definably proper and surjective.
	\end{enumerate}
\end{theorem}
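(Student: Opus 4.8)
The plan is to reduce first to the case where $X$ is closed in $F^m$, and then to treat the two implications separately, feeding the earlier lemmas in each direction. For the reduction, since $X$ is locally closed, $\partial X$ is closed and Proposition \ref{prop:zero} supplies a definable continuous $h:F^m\to F$ whose zero set is $\partial X$; the map $\rho(x)=(x,1/h(x))$ is then a definable homeomorphism of $X$ onto a closed definable subset $X'$ of $F^{m+1}$. Transporting $E$ through $\rho\times\rho$ yields a definable equivalence relation $E'$ that is closed in $X'\times X'$, and conditions (1) and (2) are invariant under this homeomorphism (for the properness appearing in (2) one invokes Lemma \ref{lem:via_homeo}). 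Thus I may assume $X$ is closed, and, the case $X=\emptyset$ being trivial, that $X\neq\emptyset$.

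For \textbf{(1)$\Rightarrow$(2)}, let $f:X\to Y$ be a definable quotient; then $f$ is definably identifying and $X$ is closed, so Lemma \ref{lem:identifying} furnishes a closed definable $K\subseteq X$ with $f|_K$ surjective and definably proper. I claim $p_1|_{p_2^{-1}(K)}$ is the desired map. Surjectivity is immediate: for any $x\in X$, surjectivity of $f|_K$ gives $y\in K$ with $f(y)=f(x)$, so $(x,y)\in E$ and $p_1(x,y)=x$. For properness, fix a closed bounded definable $C\subseteq X$; then $(p_1|_{p_2^{-1}(K)})^{-1}(C)=(C\times K)\cap E$ is closed, since $E$ is closed in $X\times X$ and $X$ is closed. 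The crux is boundedness: by Lemma \ref{lem:image} the image $f(C)$ is closed and bounded, so definable properness of $f|_K$ makes $(f|_K)^{-1}(f(C))$ bounded; and any $(x,y)\in(C\times K)\cap E$ satisfies $f(y)=f(x)\in f(C)$, whence $y\in (f|_K)^{-1}(f(C))$. Therefore the fiber is contained in $C\times(f|_K)^{-1}(f(C))$ and is bounded, establishing definable properness.

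For \textbf{(2)$\Rightarrow$(1)}, write $q_1=p_1|_{p_2^{-1}(K)}$, which is definably proper and surjective, hence definably identifying by Corollary \ref{cor:proper_identifying}. The key observation is that $q_1^{-1}(K)=E\cap(K\times K)=:E_K$, and that the restriction of a definably proper map to the preimage of a subset of its target remains definably proper; since $q_1|_{E_K}$ is exactly the first projection $E_K\to K$, the relation $E_K$ is definably proper over $K$. As $K$ is closed, hence locally closed, and nonempty (because $q_1$ is surjective onto the nonempty $X$), Theorem \ref{thm:quotient-mae} produces a definable proper quotient of $K$ by $E_K$, which is in particular a definable quotient. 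Finally, because $q_1$ is definably identifying and $E$ is closed in $X\times X$, Lemma \ref{lem:geo_quo} transfers the existence of a quotient of $K$ by $E_K$ into the existence of a definable quotient of $X$ by $E$.

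I expect the boundedness step in (1)$\Rightarrow$(2) to be the main obstacle: closedness of the fibers follows automatically from $E$ being closed, but controlling the $K$-coordinate forces the combination of definable properness of $f|_K$ with Lemma \ref{lem:image}, and this is precisely where passing to the proper core $K$ rather than all of $X$ is essential. The supporting facts in (2)$\Rightarrow$(1)—that restricting a definably proper map along a preimage preserves properness, and that $E_K$ inherits properness over $K$—are routine, but they must be recorded in order to invoke Theorem \ref{thm:quotient-mae} and Lemma \ref{lem:geo_quo} cleanly.
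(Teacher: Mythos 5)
Your proposal is correct and follows essentially the same route as the paper: the same reduction to closed $X$ via $x\mapsto(x,1/h(x))$, the same use of Lemma \ref{lem:identifying} and the containment $q_1^{-1}(C)\subseteq C\times(f|_K)^{-1}(f(C))$ for (1)$\Rightarrow$(2), and the same passage through $E_K$, Theorem \ref{thm:quotient-mae} and Lemma \ref{lem:geo_quo} for (2)$\Rightarrow$(1). The only cosmetic difference is that you phrase the properness of $p_1|_{E_K}$ as restriction of $q_1$ along the preimage of $K$, where the paper computes $(p_1|_{E_K})^{-1}(C)=q_1^{-1}(C)$ directly; these are the same observation.
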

\begin{proof}
Let $F^m$ be the ambient space of $X$.
	We first reduce to the case in which $X$ is closed.
	We have nothing to do when $X$ is closed.
	We consider the case in which $X$ is not closed.
	Since $X$ is locally closed, its frontier $\partial X:=\mycl(X) \setminus X$ is closed.
	Let $P$ be a definable continuous function whose zero set is $\partial X$.
	It is constructed in Proposition \ref{prop:zero}.
	Consider the definable map $\varphi:X \to F^{n+1}$ given by $\varphi(x)=(x,1/P(x))$.
	It is obvious that $\varphi$ is a homeomorphism onto its image and its image is closed.
	It is also obvious that a definable set $C$ is bounded and closed in $F^{n+1}$ if and only if $\varphi^{-1}(C)$ is bounded and closed in $F^n$ by Lemma \ref{lem:image}.
	Therefore, we can reduce to the case in which $X$ is closed.
	
	We first demonstrate the implication $(1) \Rightarrow (2)$.
	Let $f:X \to Y$ be a definable quotient of $X$ by $E$.
	Since $f$ is definably identifying, there exists a definable closed subset $K$ of $F^m$ which is contained in $X$ such that the restriction $f|_K:K \to Y$ is definably proper and surjective by Lemma \ref{lem:identifying}.
	Set $q_i=p_i|_{p_2^{-1}(K)}$ for $i=1,2$.
	 Consider the following commutative diagram:
	\[
	\begin{CD}
		p_2^{-1}(K) @>{q_2}>> K\\
		@V{q_1}VV @VV{f|_K}V\\
		X @>{f}>> Y
	\end{CD}
	\]
	The map $q_1$ is surjective.
	In fact, by the definition of $K$, for any $x \in X$, there exists $x' \in K$ with $f(x)=f(x')$.
	The point $(x,x')$ is contained in $p_2^{-1}(K)$ and $q_1(x,x')=x$.
	The remaining task is to demonstrate that $q_1$ is definably proper.
	Let $C$ be a nonempty closed bounded definable subset of $F^m$ which is contained in $X$.
	The image $f(C)$ is closed and bounded by Lemma \ref{lem:image}.
	Since $f|_K$ is definably proper, $(f|_K)^{-1}(f(C))$ is closed and bounded.
	By the above commutative diagram, we get $q_1^{-1}(C) \subseteq C \times (f|_K)^{-1}(f(C))$.
	It implies that $q_1^{-1}(C)$ is bounded.
	It is obvious that $q_1^{-1}(C)$ is closed in $F^{2m}$ because $E$ is closed in $F^{2m}$.
	We have demonstrated that $q_1$ is definably proper.
	
	Our next task is to demonstrate the implication $(2) \Rightarrow (1)$.
	 Consider the following commutative diagram:
	\[
	\begin{CD}
		E_K @>{\hookrightarrow}>> p_2^{-1}(K)\\
		@V{p_1|_{E_K}}VV @VV{q_1}V\\
		K @>{\hookrightarrow}>> X
	\end{CD}
	\]
	Here, $E_K$ denotes the set $E \cap (K \times K)$.
	We want to show that $p_1|_{E_K}$ is definably proper.
	Let $C$ be a definable closed bounded subset of $F^m$ with $C \subseteq K$.
	Since $q_1$ is definably proper, $q_1^{-1}(C)$ is closed and bounded.
	We obviously have $(p_1|_{E_K})^{-1}(C)=(C \times K) \cap E = q_1^{-1}(C)$.
   Therefore, $(p_1|_{E_K})^{-1}(C)$ is closed and bounded, and $p_1|_{E_K}$ is definably proper.
	By Theorem \ref{thm:quotient-mae}, a definable proper quotient of $K$ by $E_K$ exists.
	It is also a definable quotient by the the definitions of quotients and Corollary \ref{cor:proper_identifying}.
	A definable quotient of $X$ by $E$ exists by Corollary \ref{cor:proper_identifying} and Lemma \ref{lem:geo_quo}.
\end{proof}

\section{Application to definably proper action}\label{sec:proper_action}

We consider an action of a definable group on a definable set.

\begin{definition}
	A definable subset $G$ of $F^n$ is a \textit{definable group}
	if $G$ is a group and the group operations $G \times G \ni (g,h) \mapsto gh \in G$ and $G  \ni g\mapsto g^{-1} \in G$ are definable continuous maps.
	
	A \textit{definable $G$-set} is a pair $(X, \phi)$ consisting of a definable set $X \subseteq F^m$ and 
	a group action $\phi:G \times X \to X$ which is a definable continuous map. 
	We simply write $X$ instead of $(X, \phi)$ and $gx$ instead of $\phi(g,x)$.
	We say that a $G$-invariant definable subset of $X$ is a \textit{definable $G$-subset} of $X$.
\end{definition}

\begin{theorem}\label{thm:definable_action}
Consider a definably complete locally o-minimal expansion of an ordered field $\mathcal F=(F,<,+,\cdot,0,1,\ldots)$.
Let $X$ be a locally closed definable set and $X$ be a definable $G$-set.
Assume further that $Z=\{(x,gx) \in X \times X\;|\; g \in G, x\in X \}$ is closed in $X \times X$.
Then, there exists a definable quotient $X \to X/G$.
\end{theorem}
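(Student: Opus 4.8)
The plan is to recognise the orbit relation as a closed definable equivalence relation and feed it into Theorem~\ref{thm:quotinet}. Set $E := Z = \{(x,gx) \in X \times X \mid g \in G,\ x \in X\}$. Because $G$ is a group acting on $X$, reflexivity comes from $(x,x)=(x,ex)$, symmetry from rewriting $(x,gx)$ as $(gx, g^{-1}(gx))$, and transitivity from $(hg)x = h(gx)$; hence $E$ is a definable equivalence relation on $X$, and it is closed in $X \times X$ by the hypothesis on $Z$. By the equivalence in Theorem~\ref{thm:quotinet} it then suffices to verify its condition (2): a definable closed $K \subseteq X$ with $p_1|_{p_2^{-1}(K)}$ definably proper and surjective, where $p_1,p_2 : E \to X$ are the two projections.

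First I would reduce to the case where $X$ is closed in $F^m$. Using Proposition~\ref{prop:zero}, pick a definable continuous $P$ whose zero set is $\partial X$ and embed $X$ by $\varphi(x)=(x,1/P(x))$ onto a closed set $\varphi(X)$; transporting the action by $g \star x' := \varphi\bigl(g\cdot\varphi^{-1}(x')\bigr)$ turns $\varphi(X)$ into a definable $G$-set whose orbit set $(\varphi\times\varphi)(Z)$ is again closed, since $\varphi\times\varphi$ is a definable homeomorphism. A definable quotient of $\varphi(X)$ by the transported relation, composed with $\varphi$ --- which is definably proper by Lemma~\ref{lem:via_homeo} and hence definably identifying by Corollary~\ref{cor:proper_identifying} --- yields a definable quotient of $X$ by $E$ through Lemma~\ref{lem:identifying_basic}. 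So I may assume $X$ is closed, whence $E$ is closed in $F^{2m}$.

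The heart of the argument is to produce $K$ by applying Lemma~\ref{lem:basic} with $Y = X$ and the given $Z = E$. Conditions (1)--(3) are immediate: $p_1$ is surjective because $(x,x) \in Z$; condition (2) holds because $p_2(p_1^{-1}(y))$ is exactly the equivalence class of $y$, and distinct classes are disjoint; and $X$ and $Z$ are closed. The two curve-lifting hypotheses are where the group structure does the work. For (4), a completable curve $\gamma \to p$ in $X$ lifts through the diagonal: $\alpha(t):=(\gamma(t),\gamma(t)) \in Z$ is completable with $\alpha \to (p,p) \in Z$ and $p_1\circ\alpha=\gamma$. For (5), given a completable $\beta \to x$ and $y$ with $(y,x)\in Z$, fix $g \in G$ with $x = gy$ and set $\alpha(t):=(g^{-1}\beta(t),\beta(t))$; since $g\cdot(g^{-1}\beta(t))=\beta(t)$ this lies in $Z$, it satisfies $p_2\circ\alpha=\beta$, and by continuity of the fixed translation $z \mapsto g^{-1}z$ one has $g^{-1}\beta(t)\to g^{-1}x=y$, so $\alpha \to (y,x)\in Z$.

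With all five hypotheses verified, Lemma~\ref{lem:basic} supplies a definable closed $K \subseteq X$ with $p_1|_{p_2^{-1}(K)}$ surjective and definably proper, which is precisely condition (2) of Theorem~\ref{thm:quotinet}; the implication $(2)\Rightarrow(1)$ of that theorem then delivers the desired definable quotient $X \to X/G$. I expect the only genuinely delicate points to be the bookkeeping in the reduction to closed $X$ (checking that both closedness of $Z$ and the $G$-action survive the transport by $\varphi$) and the verification of hypothesis (5), where one must exploit continuity of the action in the single fixed element $g$ rather than jointly in $(g,x)$; the remaining checks are routine.
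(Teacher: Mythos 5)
Your proposal is correct and follows essentially the same route as the paper: reduce to $X$ closed, verify that the orbit relation $Z$ satisfies conditions (1)--(5) of Lemma~\ref{lem:basic} (with the diagonal lift for (4) and a fixed translation $z \mapsto g^{-1}z$ for (5)), and then invoke Theorem~\ref{thm:quotinet}. The only difference is that you spell out the reduction to closed $X$ and the equivalence-relation check in more detail than the paper, which simply refers back to the proof of Theorem~\ref{thm:quotinet}.
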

\begin{proof}
	Let $X$ be a definable subset of $F^m$.
	We may assume that $X$ is closed in the same manner as the proof of Theorem \ref{thm:quotinet}.
	Consider the definable equivalence relation $Z$ and apply Lemma \ref{lem:basic} to this set.

	We first check that the conditions (1) through (5) in Lemma \ref{lem:basic} are all satisfied.
	Let $p_1,p_2:Z \to X$ be the map defined in the same way as Lemma \ref{lem:basic}.
	The conditions (1) and (2) are satisfied because the definable set $Z$ is a definable equivalence relation defined on $X$.
	The condition (3) immediately follows from the assumptions of the theorem.
	Satisfaction of the condition (4) is easily proven.
	In fact, the definable curve $\alpha:(0,\varepsilon) \to Z$ given by $\alpha(t)=(\gamma(t),\gamma(t))$ satisfies the requirement of the condition (4) when a definable curve $\gamma:(0,\varepsilon) \to X$ completable in $X$ is given.
	We finally check the condition (5).
	Let $\beta:(0,\varepsilon) \to X$ be a definable curve completable in $X$ and set $x=\lim_{t \to 0}\beta(t)$.
	Take a point $y \in X$ with $(y,x) \in Z$.
	There exists $g \in G$ with $y=gx$ by the definition of $Z$.
	Consider the definable curve $\alpha:(0,\varepsilon) \to Z$ given by $\alpha(t)=(g \cdot \gamma(t),\gamma(t))$ satisfies the requirement in (5).
	
	We now apply Lemma \ref{lem:basic} to $Z$.
	There exists a definable closed set $K$ of $X$ such that $p_1|_{p_2^{-1}(K)}$ is surjective and definably proper.
	The definable quotient exists by Theorem \ref{thm:quotinet}.
\end{proof}

Recall the definition of definably proper actions.

\begin{definition}
	Consider an expansion of a dense linear order without endpoints.
	Let $G$ be a definable group and $X$ be a definable $G$-set.
	The $G$-action on $X$ is called \textit{definably proper} if the map $G \times X \ni (g,x) \mapsto (x,gx) \in X \times X$ is a definably proper map.
\end{definition}

\begin{theorem}\label{thm:definable_action2} 
	Consider a definably complete locally o-minimal expansion of an ordered field.
	Let $G$ be a definable group and $X$ be a definable $G$-set which is locally closed.
	Assume further that the $G$-action on $X$ is definably proper.
	Then, there exists a definable quotient $X \to X/G$.
\end{theorem}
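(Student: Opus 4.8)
The plan is to reduce this statement to the previously established Theorem \ref{thm:definable_action} by verifying its single nontrivial hypothesis, namely that the graph set $Z=\{(x,gx)\in X\times X\mid g\in G,\ x\in X\}$ is closed in $X\times X$. The definable proper action gives us a definably proper map $\Phi:G\times X\to X\times X$ defined by $\Phi(g,x)=(x,gx)$, and the image of $\Phi$ is exactly $Z$. Since a definably proper map is a closed map by Lemma \ref{lem:proper_closed}, the image of any closed set is closed in the target. The domain $G\times X$ is closed in itself, so $\Phi(G\times X)=Z$ is closed in $X\times X$. Once this is in hand, every hypothesis of Theorem \ref{thm:definable_action} is met: $X$ is locally closed and a definable $G$-set by assumption, and $Z$ is closed in $X\times X$, so that theorem furnishes the desired definable quotient $X\to X/G$.

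First I would state that $\Phi$ is definable and continuous, since the group action $\phi:G\times X\to X$ is a definable continuous map by the definition of a definable $G$-set, and the first coordinate is just the projection onto $X$; both are continuous, hence so is $\Phi$. Next I would invoke the hypothesis that the $G$-action is definably proper, which by definition means precisely that $\Phi$ is a definably proper map. Then I would apply Lemma \ref{lem:proper_closed} to conclude that $\Phi$ is a closed map. Observing that $G\times X$ is (trivially) a closed subset of itself, the image $\Phi(G\times X)$ is closed in $X\times X$; but this image is exactly $Z$. This establishes the closedness hypothesis.

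The only point requiring a little care is the ambient-space bookkeeping underlying ``closed in $X\times X$'' versus ``closed in $F^{2m}$,'' and the fact that Lemma \ref{lem:proper_closed} is stated for maps between definable sets viewed with their relative topologies; I would make sure the closedness is asserted in the correct relative topology of $X\times X$, which is exactly what Theorem \ref{thm:definable_action} requires. This is a genuinely easy verification rather than an obstacle: the heavy lifting has already been done in Lemma \ref{lem:basic} and Theorem \ref{thm:quotinet}, with Theorem \ref{thm:definable_action} packaging it, so the present theorem is essentially a corollary that translates the hypothesis ``definably proper action'' into the hypothesis ``$Z$ closed'' via the closed-map property of definably proper maps.

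<br>

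I do not expect any serious obstacle here; if there is a subtle step, it is confirming that definable properness of $\Phi$ yields closedness of its full image and not merely of images of compact-like pieces, but Lemma \ref{lem:proper_closed} gives exactly the closed-map conclusion we need, so the argument closes immediately.
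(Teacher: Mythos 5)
Your proposal is correct and follows exactly the paper's own argument: apply Lemma \ref{lem:proper_closed} to the definably proper map $(g,x)\mapsto(x,gx)$ to conclude that its image $Z$ is closed in $X\times X$, then invoke Theorem \ref{thm:definable_action}. No discrepancies to report.
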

\begin{proof}
	Apply Lemma \ref{lem:proper_closed} to the definably proper map $G \times X \ni (g,x) \mapsto (x,gx) \in X \times X$.
	The equivalence relation $E=\{(x,gx) \in X \times X\;|\; g \in G,x\in X\}$ is closed in $X \times X$ because it is the image of the above definably proper map.
	The theorem follows from Theorem \ref{thm:definable_action}.
\end{proof}

\begin{theorem}
	Let $\mathcal F=(F,<,+, \cdot, \ldots)$ be a definably complete locally o-minimal expansion of an ordered field.
	Suppose that $G$ is a definable group and $A$ is a closed definable $G$-subset of a definable $G$-set $X$ which is locally closed.
	If the action is definably proper, then  $A$ is the zero set of a $G$-invariant definable continuous function 
	$f:X \to F$.
\end{theorem}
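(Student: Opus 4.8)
The plan is to reduce the construction of a $G$-invariant function to a construction on the quotient $X/G$, exploiting the elementary principle that a $G$-invariant (definable continuous) function on $X$ is exactly a function that factors through the orbit map. Since $X$ is locally closed and the $G$-action is definably proper, Theorem \ref{thm:definable_action2} immediately supplies a definable quotient $q:X \to X/G$. Recall that, being a definable quotient, $q$ is a definably identifying (hence continuous) surjection and satisfies $q(x)=q(x')$ precisely when $x$ and $x'$ lie in the same $G$-orbit. Write $Y:=X/G \subseteq F^N$ for the ambient space of the quotient. First I would dispose of the trivial case: if $A=\emptyset$, the constant function $f\equiv 1$ works, so I may assume $A \neq \emptyset$.

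Next I would push $A$ down to $Y$ and record two key facts about $W:=q(A)$. Because $A$ is a definable $G$-subset, it is $G$-saturated, so $q^{-1}(W)=A$. Since $A$ is closed in $X$ and $q$ is definably identifying, the definition of definably identifying map applied to $W$ (using $q^{-1}(W)=A$) shows that $W$ is closed in $Y$. Now I would manufacture a continuous function on $Y$ vanishing exactly on $W$: as $W$ is closed in $Y$, its closure in the ambient space satisfies $\mycl(W) \cap Y = W$, and $\mycl(W)$ is a closed definable subset of $F^N$. Proposition \ref{prop:zero} then yields a definable continuous function $\delta:F^N \to F$ whose zero set is $\mycl(W)$, and the restriction $h:=\delta|_Y:Y \to F$ is definable, continuous, with zero set $\mycl(W)\cap Y = W$.

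Finally I would assemble $f:=h \circ q:X \to F$. It is definable and continuous as a composition of such maps. It is $G$-invariant because $q(gx)=q(x)$ for every $g \in G$ and $x \in X$, whence $f(gx)=h(q(gx))=h(q(x))=f(x)$. Its zero set is $\{x \in X \mid q(x) \in W\}=q^{-1}(W)=A$, so $A$ is exactly the zero set of $f$, as required. The substantive content of the argument is already bundled into Theorem \ref{thm:definable_action2}; the only points demanding genuine care are that $W=q(A)$ is closed in $Y$ (which is where the definably identifying property and the $G$-saturation of $A$ are used together) and that passing to the ambient closure $\mycl(W)$ and restricting $\delta$ back to $Y$ recovers the correct zero set. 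I expect no serious obstacle beyond checking these two compatibilities.
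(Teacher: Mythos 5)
Your proposal is correct and follows essentially the same route as the paper's own proof: pass to the definable quotient $q:X \to X/G$ from Theorem \ref{thm:definable_action2}, use $G$-saturation of $A$ and the definably identifying property of $q$ to see that $q(A)$ is closed in $X/G$, apply Proposition \ref{prop:zero} to the ambient closure of $q(A)$, restrict back to $X/G$, and compose with $q$. The only (harmless) difference is your separate treatment of the case $A=\emptyset$.
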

\begin{proof}
By Theorem \ref{thm:definable_action2}, 
there exists a definable quotient $\pi:X \to X/G$.
We may assume that $X/G$ is a definable subset of $F^n$.

Let $Z$ be a closed definable $G$-subset of $X$.
Then  $Z=\pi^{-1}(\pi(Z))$.
Since $\pi$ is definably identifying and $Z$ is closed,
$\pi(Z)$ is closed in $X/G$.
Let $C$ be the closure of $\pi (Z)$ in $F^n$.
By Proposition \ref{prop:zero}, 
there exists a definable continuous map $f:F^n \to F$ such that
$f^{-1}(0)=C$.
Since $\pi (Z)$ is closed in $X/G$, we have $C \cap (X/G)=\pi (Z)$.
The restriction $k:=f|_{X/G}$ is a definable continuous function whose zero set is $\pi (Z)$.
Thus the $G$-invariant continuous function $h:=k \circ \pi:X \to F$ satisfies the equality $Z=h^{-1}(0)$.
\end{proof}
 
 \section*{Acknowledgment}
 This work was supported by the Research Institute for Mathematical Sciences (RIMS), an International Usage/Research Center located in Kyoto University.
 It was first announced at the RIMS model theory workshop in 2022.


\begin{thebibliography}{99}

\bibitem{AF}
M. Aschenbrenner and A. Fischer, 
\emph{Definable versions of theorems by Kirszbraun and Helly},
Proc. Lond. Math. Soc. \textbf{102} (2011), 468-502.

\bibitem{BCR}
J. Bochnak, M. Coste, and M. -F. Roy., 
\emph{Real algebraic geometry}, 
Ergebnisse der Mathematik und ihrer Grenzgebiete (3), vol. 36, Springer, Berlin, 1998.

\bibitem{B}
G. Brumfiel, 
\emph{Quotient spaces for semialgebraic equivalence relations},
Math. Z. \textbf{195} (1987), 69-78.

\bibitem{D}
L. van den Dries, 
\emph{Tame topology and o-minimal structures},
London Mathematical Society Lecture Note Series, Vol. 248.
Cambridge University Press, Cambridge, 1998.


\bibitem{Fornasiero}
A. Fornasiero, 
\emph{Locally o-minimal structures and structures with locally o-minimal open core}, 
Ann. Pure Appl. Logic, \textbf{164} (2013), 211–229.


%

\bibitem{F22}
M. Fujita,
\emph{Locally o-minimal structures with tame topological properties},
J. Symbolic Logic, to appear.

%

\bibitem{F7}
M. Fujita,
\emph{Decomposition into special submanifolds},
Math. Log. Quart. to appear.

\bibitem{FKK}
M. Fujita, T. Kawakami and W. Komine,
\emph{Tameness of definably complete locally o-minimal structures and definable bounded multiplication},
Math. Log. Quart., \textbf{68} (2022), 496-515.


\bibitem{M}
C. Miller,
\emph{Expansions of dense linear orders with the intermediate value property},
J. Symbolic Logic, \textbf{66} (2001), 1783-1790.

\bibitem{Sch}
C. Scheiderer,
\emph{Quotients of semi-algebraic spaces},
Math. Z., \textbf{201} (1989), 249-271.

\bibitem{TV}
C. Toffalori and K. Vozoris, 
\emph{Notes on local o-minimality},
Math. Log. Quart., \textbf{55} (2009), 617-632.


\end{thebibliography}
\end{document}